\let\fullref\autoref
\def\makeautorefname#1#2{\expandafter\def\csname#1autorefname\endcsname{#2}}
\begin{document}

\title[Vector bundles over classifying spaces of $p$--local finite groups]
{Vector bundles over classifying spaces of $p$--local finite groups and Benson--Carlson duality}

\author{Jos\'e Cantarero}
\address{\hfill \break Consejo Nacional de Ciencia y Tecnolog\'ia \\ 
\hfill \break Centro de Investigaci\'on en Matem\'aticas, A.C. Unidad M\'erida \\ 
\hfill \break Parque Cient\'ifico y Tecnol\'ogico de Yucat\'an \\ 
\hfill \break Carretera Sierra Papacal--Chuburn\'a Km 5.5 \\ 
\hfill \break M\'erida, YUC 97302 \\ 
\hfill \break Mexico.}
\email{cantarero@cimat.mx}

\author{Nat\`alia Castellana}
\address{\hfill \break Departament de Matem\`atiques\\ 
         \hfill \break Universitat Aut\`onoma de Barcelona and BGSMath\\ 
         \hfill \break Edifici Cc\\ 
         \hfill \break E--08193 Bellaterra\\ 
         \hfill \break Spain.}
\email{natalia@mat.uab.cat}         
         
\author{Lola Morales}
\address{\hfill \break IES Clara Campoamor\\ 
         \hfill \break Av. de Alcorc\'on, 1\\  
         \hfill \break E--28936 M\'ostoles\\ 
         \hfill \break Spain.}

\subjclass[2010]{55R35, (primary), 20D20, 20C20 (secondary)}
\keywords{$p$--local, fusion system, Benson--Carlson duality}


\newcommand{\B}{{\mathbb B}}
\newcommand{\C}{{\mathbb C}}
\newcommand{\Ca}{{\mathcal C}}
\newcommand{\F}{{\mathbb F}}
\newcommand{\Ff}{{\mathcal{F}}}
\newcommand{\K}{\mathbb{K}}
\newcommand{\Ll}{\mathcal{L}}
\newcommand{\m}{\mathfrak{m}}
\newcommand{\Or}{{\mathcal O}}
\newcommand{\Q}{{\mathbb Q}}
\newcommand{\Z}{{\mathbb Z}}

\newcommand{\Aut}{\operatorname{Aut}\nolimits}
\newcommand{\Co}{\operatorname{Co}\nolimits}
\newcommand{\End}{\operatorname{End}\nolimits}
\newcommand{\Hom}{\operatorname{Hom}\nolimits}
\newcommand{\id}{\operatorname{id}\nolimits}
\newcommand{\im}{\operatorname{Im}\nolimits}
\newcommand{\Inj}{\operatorname{Inj}\nolimits}
\newcommand{\Inn}{\operatorname{Inn}\nolimits}
\newcommand{\Iso}{\operatorname{Iso}\nolimits}
\newcommand{\Map}{\operatorname{Map}\nolimits}
\newcommand{\Mod}{\operatorname{Mod}\nolimits}
\newcommand{\Mor}{\operatorname{Mor}\nolimits}
\newcommand{\op}{\operatorname{op}\nolimits}
\newcommand{\Out}{\operatorname{Out}\nolimits}
\newcommand{\pt}{\operatorname{pt}\nolimits}
\newcommand{\reg}{\operatorname{reg}\nolimits}
\newcommand{\res}{\operatorname{res}\nolimits}
\newcommand{\Rep}{\operatorname{Rep}\nolimits}
\newcommand{\Set}{\operatorname{Set}\nolimits}
\newcommand{\Sol}{\operatorname{Sol}\nolimits}
\newcommand{\Spin}{\operatorname{Spin}\nolimits}
\newcommand{\tr}{\operatorname{tr}\nolimits}
\newcommand{\Top}{\operatorname{Top}\nolimits}
\newcommand{\Vect}{\operatorname{Vect}\nolimits}

\newcommand{\higherlim}[2]{\displaystyle\setbox1=\hbox{\rm lim}
	\setbox2=\hbox to \wd1{\leftarrowfill} \ht2=0pt \dp2=-1pt
	\setbox3=\hbox{$\scriptstyle{#1}$}
	\def\test{#1}\ifx\test\empty
	\mathop{\mathop{\vtop{\baselineskip=5pt\box1\box2}}}\nolimits^{#2}
	\else
	\ifdim\wd1<\wd3
	\mathop{\hphantom{^{#2}}\vtop{\baselineskip=5pt\box1\box2}^{#2}}_{#1}
	\else
	\mathop{\mathop{\vtop{\baselineskip=5pt\box1\box2}}_{#1}}%
	\nolimits^{#2}
	\fi\fi}
	
\newcommand{\hocolim}[2]{\displaystyle\setbox1=\hbox{\rm hocolim}
	\setbox2=\hbox to \wd1{\rightarrowfill} \ht2=0pt \dp2=-1pt
	\setbox3=\hbox{$\scriptstyle{#1}$}
	\def\test{#1}\ifx\test\empty
	\mathop{\mathop{\vtop{\baselineskip=5pt\box1\box2}}}\nolimits^{#2}
	\else
	\ifdim\wd1<\wd3
	\mathop{\hphantom{^{#2}}\vtop{\baselineskip=5pt\box1\box2}^{#2}}_{#1}
	\else
	\mathop{\mathop{\vtop{\baselineskip=5pt\box1\box2}}_{#1}}%
	\nolimits^{#2}
	\fi\fi}


\newcommand{\pcom}{^\wedge_p}

\theoremstyle{plain}
\newtheorem{theorem}{Theorem}[section]
\newtheorem{proposition}{Proposition}[section]
\newtheorem{corollary}{Corollary}[section]
\newtheorem{lemma}{Lemma}[section]

\makeatletter
\let\c@lemma=\c@theorem
\makeatother

\makeatletter
\let\c@proposition=\c@theorem
\makeatother

\makeatletter
\let\c@corollary=\c@theorem
\makeatother

\newtheorem*{introtheorem}{Theorem} 
\theoremstyle{definition}

\newtheorem{definition}{Definition}[section]
\newtheorem{remark}{Remark}[section]
\newtheorem{example}{Example}[section]

\makeatletter
\let\c@definition=\c@theorem
\makeatother

\makeatletter
\let\c@remark=\c@theorem
\makeatother

\makeatletter
\let\c@example=\c@theorem
\makeatother

\begin{abstract}
In this paper we obtain a description of the Grothendieck group of complex vector bundles over the classifying space 
of a $p$--local finite group $(S,\Ff,\Ll)$ in terms of representation rings of subgroups of $S$. We also prove a stable 
elements formula for generalized cohomological invariants of $p$--local finite groups, which is used to show the existence 
of unitary embeddings of $p$--local finite groups. Finally, we show that the augmentation $C^*(|\Ll|\pcom;\F_p)\to \F_p$ 
is Gorenstein in the sense of Dwyer--Greenlees--Iyengar and obtain some consequences about the cohomology ring of $|\Ll|\pcom$.
\end{abstract}

\maketitle

\section{Introduction}
\label{intro}

Let $\K(X)$ be the Grothendieck group of the monoid of complex vector bundles 
over $X$. When $P$ is a finite $p$--group, Dwyer and Zabrodsky \cite{DZ} showed 
that there is an isomorphism $\K(BP) \cong R(P)$, where $R(P)$ denotes the representation 
ring of $P$. A few years later, Jackowski and Oliver proved in \cite{JO} that if $G$ is a 
compact Lie group, there is an isomorphism
\[ \K(BG) \stackrel{\cong}{\longrightarrow} \higherlim{}{} R(P) \]
where the inverse limit is taken over all $p$--toral subgroups (for all primes) with respect to inclusion and conjugation. 
Note that this statement has a clear `local' flavour in the sense that this object depends only on a family of subgroups 
of $G$ and conjugacy relations among them.

The notions of $p$--local finite groups \cite{BLO1} and $p$--local compact groups \cite{BLO2} introduced by Broto, Levi
and Oliver model the $p$--local
information of finite and compact Lie groups, respectively. Hence one would expect to have an analogous description
for the Grothendieck group of finite-dimensional complex vector bundles over their classifying spaces. In this paper
we focus on $p$--local finite groups. Partial results about $p$--local compact groups in this direction were obtained 
by Cantarero and Castellana in \cite{CC}.

Recall that a $p$--local finite group is a triple $(S,\Ff,\Ll)$, where $S$ is a finite $p$--group, $\Ff$ is a saturated fusion system over
$S$ and $\Ll$ is a centric linking system associated to $\Ff$. The classifying space of $(S,\Ff,\Ll)$ is $ | \Ll | \pcom $. More details 
can be found in \fullref{section:preliminaries} of this article. Our generalization in this context is the
following theorem.

\begin{introtheorem}
Given a $p$--local finite group $(S,\Ff,\Ll)$, restriction to the subgroups of $S$ 
gives an isomorphism 
\[ \K(|\Ll|\pcom) \stackrel{\cong}{\longrightarrow} \higherlim{\Or(\Ff^c)}{} R(P). \]
\end{introtheorem}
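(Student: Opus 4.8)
The plan is to turn the statement, using the homotopy colimit description of $|\Ll|\pcom$ and Dwyer--Zabrodsky theory for mapping spaces, into a vanishing statement for higher derived limits over $\Or(\Ff^c)$. First I would invoke the decomposition of Broto--Levi--Oliver \cite{BLO1}: there is a functor $\widetilde B\colon\Or(\Ff^c)\to\Top$ with $\widetilde B(P)\simeq BP$ and a homotopy equivalence $|\Ll|\simeq\hocolim_{\Or(\Ff^c)}\widetilde B$, so that $|\Ll|\pcom\simeq\bigl(\hocolim_{\Or(\Ff^c)}\widetilde B\bigr)\pcom$. Each $\Ff$-centric subgroup $P\le S$ gives a map $BP\simeq\widetilde B(P)\to|\Ll|\pcom$; restricting a complex vector bundle along these maps, together with the identification $\K(BP)\cong R(P)$ for a finite $p$-group (a special case of \cite{JO}), produces the homomorphism $\Phi\colon\K(|\Ll|\pcom)\to\higherlim{\Or(\Ff^c)}{}R(P)$ of the statement. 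Since the inclusion $P\hookrightarrow S$ is a morphism of $\Or(\Ff^c)$, the target embeds into $R(S)$ as the subgroup of stable elements, so it is a finitely generated free abelian group; the content of the theorem is that $\Phi$ is bijective.

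Next I would compute $\Vect_n(|\Ll|\pcom)=[|\Ll|\pcom,BU(n)]$ for each $n\ge0$. A standard argument (using that $|\Ll|\pcom$ is $p$-complete with $\F_p$-cohomology of finite type, so that the homotopy fibre of $BU(n)\to BU(n)\pcom$, which is nilpotent with uniquely $p$-divisible homotopy groups and $\F_p$-acyclic, receives a contractible space of maps from $|\Ll|\pcom$) identifies $[|\Ll|\pcom,BU(n)]$ with $[|\Ll|\pcom,BU(n)\pcom]$, and the colimit description turns mapping spaces into homotopy limits:
\[
\Map\bigl(|\Ll|\pcom,BU(n)\pcom\bigr)\;\simeq\;\operatorname{holim}_{\Or(\Ff^c)}\Map\bigl(\widetilde B(P),BU(n)\pcom\bigr).
\]
By Dwyer--Zabrodsky, $\Map(BP,BU(n)\pcom)\simeq\coprod_{\rho\in\Rep(P,U(n))}BC_{U(n)}(\rho)\pcom$, a disjoint union of $p$-completed classifying spaces of the connected centralizers $C_{U(n)}(\rho)$. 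Feeding this into the homotopy limit spectral sequence, I would read off $\pi_0$ as $\higherlim{\Or(\Ff^c)}{}\Rep(P,U(n))$ provided the higher limits $\higherlim{\Or(\Ff^c)}{s}$, $s\ge1$, of the functors $P\mapsto\pi_t\bigl(BC_{U(n)}(\rho_P)\pcom\bigr)$ attached to each stable family $(\rho_P)$ vanish; here $\pi_1$ of these spaces is trivial, so there is no $\higherlim{}{1}$-obstruction at the level of components, but the higher strata of the (fringed) spectral sequence must still be killed. Passing to the colimit over $n$ (which commutes with the finite limit $\higherlim{\Or(\Ff^c)}{}$) and group-completing both sides then yields $\K(|\Ll|\pcom)\cong\higherlim{\Or(\Ff^c)}{}R(P)$, and one checks this isomorphism is $\Phi$. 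The group completion is compatible with the inverse limit because both $\higherlim{\Or(\Ff^c)}{}\Rep(P,U(n))$ and $\higherlim{\Or(\Ff^c)}{}R(P)$ sit inside their values at $S$, and every stable virtual character of $S$ is a difference of two genuine stable families --- add a large multiple of the regular representation, which is itself a stable element.

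The hard part will be the vanishing of $\higherlim{\Or(\Ff^c)}{s}$, $s\ge1$, for the representation-theoretic coefficient functors above. My plan is to recognise these functors --- homotopy groups of classifying spaces of the centralizers $C_{U(n)}(\rho_P)$, and $R(-)$ itself --- as composites of the centralizer construction with $\F_p$-cohomological (equivalently, $\pi_*BU$-type) functors, and to deduce their acyclicity over $\Or(\Ff^c)$ from the general machinery behind the stable elements formula for generalized cohomological invariants proved earlier in this paper: the saturated-fusion-system analogue of the vanishing theorems of Jackowski--McClure--Oliver and Jackowski--Oliver for $P\mapsto H^*(BP)$ and $P\mapsto R(P)$. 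Carrying out this reduction for a general saturated fusion system --- using the definition of $\Ff$-centric subgroup, Alperin's fusion theorem, and a dimension-shifting Mackey-functor argument --- is where the real work sits.
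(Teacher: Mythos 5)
Your outline correctly identifies the homotopy colimit decomposition of $|\Ll|\pcom$ over $\Or(\Ff^c)$, the role of Dwyer--Zabrodsky in identifying $[BP,BU(n)\pcom]$ with $\Rep(P,U(n))$, and the eventual group completion step; but the heart of the argument --- how to kill the higher-limit obstructions --- is not where the paper puts it, and as sketched it has a genuine gap.

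You propose to prove directly that $\higherlim{\Or(\Ff^c)}{s}\pi_t\bigl(BC_{U(n)}(\rho_P)\pcom\bigr)=0$ for all $s\geq 1$, and to "deduce their acyclicity from the general machinery behind the stable elements formula proved earlier in this paper," which you describe as a fusion-system analogue of the Jackowski--McClure--Oliver vanishing theorems. That machinery is not in this paper. The stable elements formula here (\fullref{main-theorem-3} and \fullref{corollary:Ktheory}) is proved by a purely stable-homotopy argument: Ragnarsson's characteristic idempotent $\omega\in\{\Sigma^\infty BS,\Sigma^\infty BS\}$ splits off $\B\Ff\simeq\Sigma^\infty|\Ll|\pcom$, and $\{\B\Ff,Z\}$ is identified with the image of $\omega^*$. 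No higher limits are computed, so there is nothing to borrow from there. Proving the acyclicity you want for the unstable coefficient functors $P\mapsto\pi_t(BC_{U(n)}(\rho_P)\pcom)$ over $\Or(\Ff^c)$ is precisely the kind of hard graded-Mackey-functor analysis that the paper is designed to avoid, and you would in fact be proving a strictly stronger statement (surjectivity of each $\psi_n$) than the theorem requires or the paper establishes.

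What the paper does instead, in \fullref{main-theorem-2}, is to stabilize: replace $\rho$ by $\rho\oplus M\reg$ for $M$ large. Then every irreducible of every $P$ appears in $(\rho\oplus M\reg)_{|P}$ with large multiplicity, the centralizers $C_{U(n+M|S|)}(\rho_P\oplus M\reg_P)=\prod U(n_j+Mk_j)$ land in the stable range through dimension $N_\Ff$ (the uniform bound above which all higher limits over $\Or(\Ff^c)$ vanish, by Corollary 3.4 of \cite{BLO1}), and postcomposition $BU(n+M|S|)\to BU$ identifies the Wojtkowiak obstruction groups with those controlling existence and uniqueness in $K^*(|\Ll|\pcom;\Z\pcom)$ --- which vanish by \fullref{corollary:Ktheory}. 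So the paper gets surjectivity of $\psi_{m+M|S|}$ on $\rho\oplus M\reg$ and injectivity up to Whitney sum with a multiple of $\reg$, and this weaker statement is exactly what group-completion needs (\fullref{theorem:Grothendieck}). That stabilization step, not a vanishing theorem for the unstable coefficient functors, is the missing idea in your proposal; without it you are committed to a harder theorem with no proof in sight.
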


Hence the theorem above can be rephrased to say that $\K(|\Ll|\pcom)$ can be computed by stable elements. An important
component in the proof of this theorem is the fact that $p$--adic topological $K$--theory can also
be computed by stable elements, since this cohomology theory describes the stable behaviour
of complex vector bundles. In fact, we prove a stable elements formula for any generalized cohomology
theory.

\begin{introtheorem}
Given a $p$--local finite group $(S,\Ff,\Ll)$, there is an isomorphism 
\[ h^*(|\Ll |\pcom) \cong \higherlim{\Or(\Ff^c)}{} h^*(BP). \]
for any generalized cohomology theory $h^*$.
\end{introtheorem}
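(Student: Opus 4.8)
The statement claims that any generalized cohomology theory $h^*$ satisfies a stable elements formula over the orbit category $\Or(\Ff^c)$. The natural strategy is to exploit the homotopy decomposition of Broto--Levi--Oliver: the classifying space $|\Ll|\pcom$ is, up to $p$--completion, the homotopy colimit over $\Or(\Ff^c)$ of the functor $P\mapsto BP$. Applying $h^*$ to this homotopy colimit gives a Bousfield--Kan spectral sequence
\[
E_2^{i,j} = \higherlim{\Or(\Ff^c)}{i} h^j(BP) \Longrightarrow h^{i+j}(\hocolim{\Or(\Ff^c)}{} BP).
\]
So the plan breaks into two pieces: (1) show the higher limits $\higherlim{\Or(\Ff^c)}{i} h^j(BP)$ vanish for $i>0$, so the spectral sequence collapses to the edge, and (2) control the difference between $h^*$ of the $p$--completed space $|\Ll|\pcom$ and $h^*$ of the uncompleted homotopy colimit.

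For step (1), the key input is the work of Broto--Levi--Oliver (and Grodal) on the higher limits of functors over $\Or(\Ff^c)$: the functor $P\mapsto h^j(BP)$ should be shown to be acyclic for this inverse limit. This follows from the fact that, as a Mackey-functor-like object, $h^*(BP)$ — and more precisely its associated functor on the orbit category — lies in the class of functors whose higher limits vanish because $h^*(B-)$ is built from the Mackey functor structure on the stable cohomotopy / the Burnside ring action; the standard tool is Jackowski--McClure's and BLO's computations showing that functors of the form $P \mapsto M(P)$ for $M$ a cohomological Mackey functor, or more generally $P\mapsto [BP_+, E]^*$ for a spectrum $E$, are $\higherlim{}{*}$--acyclic over the orbit category of a saturated fusion system. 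Concretely I would cite the relevant acyclicity result (analogous to \cite{JO} and the stable elements theorem for ordinary and Morava $K$--theory) and verify its hypotheses apply verbatim to an arbitrary $h^*$, since $h^*(BP)=E^{-*}(BP_+)=[\Sigma^\infty BP_+, E]_{-*}$ depends on $BP$ only through a stable functor.

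For step (2), I would argue that the natural map $\hocolim{\Or(\Ff^c)}{} BP \to |\Ll|$ is a mod $p$ homology equivalence — this is exactly the content of the Broto--Levi--Oliver homotopy decomposition theorem — and hence induces an equivalence after $p$--completion. The subtlety is that a general cohomology theory $h^*$ need not be insensitive to $p$--completion. However, one observes that the homotopy colimit is taken over a finite category of spaces $BP$ with $P$ a finite $p$--group, each of which is already $\F_p$--good and whose relevant invariants $h^*(BP)$ are determined by the $p$--complete type; more to the point, the spectral sequence argument in step (1) computes $h^*$ of the uncompleted homotopy colimit, and one then checks that the canonical comparison map to $h^*(|\Ll|\pcom)$ is an isomorphism because both sides are the inverse limit of the same diagram $P\mapsto h^*(BP)$ — the space $|\Ll|\pcom$ receives compatible maps $BP\to |\Ll|\pcom$ for all $P\in\Or(\Ff^c)$, inducing the map to the inverse limit, and this map is identified with the edge homomorphism of the collapsed spectral sequence.

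The main obstacle I anticipate is step (1): verifying that the acyclicity of $P\mapsto h^*(BP)$ over $\Or(\Ff^c)$ holds for a \emph{completely arbitrary} generalized cohomology theory, rather than for $K$--theory or Morava $K$--theory where special structure (completeness, the Atiyah--Segal-type theorems, the character-theoretic input of Hopkins--Kuhn--Ravenel) is available. The resolution is that acyclicity is a purely formal consequence of $h^*(B-)$ being a stable functor together with the combinatorial structure of saturated fusion systems — specifically, that the orbit category $\Or(\Ff^c)$ has a contractible nerve after suitable localization and that $BP$ represents a projective object in the appropriate sense — so the argument of \cite{JO}, \cite{BLO1} goes through formally once phrased for an arbitrary spectrum $E$ with $h^*(X)=E^*(X)$. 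The comparison with $p$--completion in step (2) is then routine given the mod $p$ homology equivalence of the decomposition.
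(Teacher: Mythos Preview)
Your approach via the Bousfield--Kan spectral sequence differs substantially from the paper's, and it has two genuine gaps.

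The paper's proof is short and direct: it uses Ragnarsson's characteristic idempotent $\omega \in \{\Sigma^\infty BS, \Sigma^\infty BS\}$, which splits off $\B\Ff \simeq \Sigma^\infty |\Ll|\pcom$ as a stable summand of $\Sigma^\infty BS$. For any spectrum $Z$ one then has $\{\Sigma^\infty |\Ll|\pcom, Z\} \cong \im(\omega^*) \subseteq \{\Sigma^\infty BS, Z\}$, and the $\Ff$--invariance of $\omega$ (namely $\omega \circ \Sigma^\infty B\varphi \simeq \omega|_{\Sigma^\infty BP}$, together with Ragnarsson's Corollary~6.4) identifies $\im(\omega^*)$ with the $\Ff$--stable elements, i.e.\ with $\higherlim{\Or(\Ff^c)}{} \{\Sigma^\infty BP, Z\}$. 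No spectral sequence, no higher-limit computation, and no comparison between completed and uncompleted spaces is needed.

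Your step~(2) is circular. Even if the spectral sequence collapses, it computes $h^*(\hocolim{\Or(\Ff^c)}{} \widetilde{B}P) \cong h^*(|\Ll|)$, not $h^*(|\Ll|\pcom)$. The map $|\Ll| \to |\Ll|\pcom$ is only a mod~$p$ homology equivalence, and for a general $h^*$ there is no reason it induces an isomorphism. You then assert that the comparison map $h^*(|\Ll|\pcom) \to \higherlim{\Or(\Ff^c)}{} h^*(BP)$ is an isomorphism ``because both sides are the inverse limit of the same diagram'' --- but that is exactly the statement to be proved. That each $BP$ is already $p$--complete does not help: homotopy colimits of $p$--complete spaces need not be $p$--complete. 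The paper sidesteps this entirely because Ragnarsson's equivalence $\B\Ff \simeq \Sigma^\infty |\Ll|\pcom$ is a statement about the \emph{completed} space from the outset.

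Your step~(1) is unjustified. You assert that $\higherlim{\Or(\Ff^c)}{i} h^j(BP) = 0$ for $i>0$ is ``a purely formal consequence'' of stability and the combinatorics of $\Or(\Ff^c)$, citing \cite{JO} and \cite{BLO1}; but those acyclicity arguments are for specific theories and rely on specific structure (in \cite{BLO1}, precisely the biset $\Omega$ with $|\Omega|/|S| \equiv 1 \pmod p$). Your remarks about Mackey functors, projective objects, and contractible nerves do not point to any established result yielding acyclicity over $\Or(\Ff^c)$ for \emph{arbitrary} coefficient functors. The mechanism that would make such a vanishing work is exactly the characteristic biset/idempotent --- and once you invoke that, you have the paper's argument in hand and the spectral sequence detour is superfluous.
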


We note that this result was already established for cohomology with coefficients in $\F_p$ by Broto--Levi--Oliver 
in \cite{BLO1}. Motivated by the biset from \cite[Proposition 5.5]{BLO1}, Ragnarsson constructed in \cite{R1} a characteristic 
idempotent $\omega$ in the ring of stable self maps $\{ \Sigma^{\infty} BS, \Sigma^{\infty} BS \}$ associated to 
the saturated fusion system $\Ff$. This idempotent determines a stable summand $\B \Ff$ of $\Sigma^{\infty} BS$ which 
is homotopy equivalent to $\Sigma^{\infty} |\Ll| \pcom$ and it detects $\Ff$--stable maps into spectra, in the sense that a map
$f \colon \Sigma^{\infty} BS \to Z$ is $\Ff$--stable if and only if $ f \circ \omega \simeq f$.
The theorem mentioned above is a formal consequence of the existence of $\omega$ and its properties.  
 
In \cite{CL}, Castellana and Libman study the set of homotopy classes of maps between $p$--local finite groups. 
In particular they construct maps into the $p$--completed classifying space of a symmetric group which extend 
the regular representation of the Sylow subgroup. In that case, if one embeds the symmetric group as permutation matrices 
in a unitary group, we obtain a map which is a monomorphism in a $p$--local homotopic sense (see \fullref{Duality}
for the definition of homotopy monomorphism at the prime $p$).

Homotopy monomorphisms $|\Ll| \pcom \to BU(m) \pcom$ at the prime $p$ were also studied in \cite{CC} by the first two
authors of this article, where they are called unitary embeddings. Note that \cite{CC} has some overlap with this paper, 
but in this case the Sylow subgroup  is finite and that allows us to show the existence of unitary embeddings of $p$--local 
finite groups. Moreover, in \fullref{Duality} we prove the following improvement.

\begin{introtheorem}
Given a $p$--local finite group $(S,\Ff,\Ll)$, there exists a homotopy monomorphism $ |\Ll|\pcom \to BSU(n)\pcom$ 
whose homotopy fibre is a $\F_p$--finite space with Poincar\'e duality. 
\end{introtheorem}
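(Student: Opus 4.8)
The plan is to start from the unitary embedding of $(S,\Ff,\Ll)$ constructed earlier, use it to see that $|\Ll|\pcom$ is an $\F_p$--Gorenstein space, perturb the embedding so that it factors through $BSU(n)\pcom$, and then read off Poincar\'e duality on the homotopy fibre from the Gorenstein condition at the two ends of the resulting fibration. Concretely, I would first recall the unitary embedding $\rho_0\colon|\Ll|\pcom\to BU(m)\pcom$. Being a homotopy monomorphism at $p$ it has $\F_p$--finite homotopy fibre, so $H^*(|\Ll|\pcom;\F_p)$ is a finitely generated module over $H^*(BU(m);\F_p)=\F_p[c_1,\dots,c_m]$ and, by a derived Nakayama argument, $C^*(|\Ll|\pcom;\F_p)$ is small (perfect) as a $C^*(BU(m);\F_p)$--module. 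Since a polynomial algebra is Koszul self--dual, the augmentation $C^*(BU(m);\F_p)\to\F_p$ is Gorenstein in the sense of Dwyer--Greenlees--Iyengar; by ascent of the Gorenstein property along a map of commutative ring spectra with small source, $C^*(|\Ll|\pcom;\F_p)\to\F_p$ is Gorenstein as well. This is the same mechanism that makes $BG$ Gorenstein from a faithful representation, and it is the Benson--Carlson duality statement for the $p$--local finite group.

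Next I would replace $\rho_0$ by a map landing in a special unitary group. Composing $\rho_0$ with the map $BU(m)\to BU(m+1)$ classifying $V\mapsto V\oplus\overline{\det V}$ makes the determinant line bundle trivial, so the composite lifts (uniquely up to homotopy) to a map $\rho\colon|\Ll|\pcom\to BSU(m+1)\pcom$; set $n=m+1$. From $c(V\oplus\overline{\det V})=c(V)\,(1-c_1(V))$ one computes that the lift $BU(m)\to BSU(m+1)$ sends the universal Chern classes $c_j$ to $c_j-c_1c_{j-1}$ for $2\le j\le m$ and $c_{m+1}$ to $-c_1c_m$; in the quotient of $\F_p[c_1,\dots,c_m]$ by the subalgebra these generate one has $c_j\equiv c_1^{\,j}$ and $c_1^{\,m+1}\equiv 0$, so $\F_p[c_1,\dots,c_m]$ is finitely generated over it and $BU(m)\to BSU(m+1)$ has $\F_p$--finite homotopy fibre. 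Splicing this with the homotopy fibre of $\rho_0$ through the fibre sequence of the composite shows that $\rho$ is again a homotopy monomorphism at $p$.

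Finally I would establish Poincar\'e duality on $F:=\operatorname{hofib}(\rho)$. The target $BSU(n)\pcom$ is $\F_p$--Gorenstein because $H^*(BSU(n);\F_p)=\F_p[c_2,\dots,c_n]$ is polynomial, $|\Ll|\pcom$ is $\F_p$--Gorenstein by the first step, and $F$ is $\F_p$--finite (homotopy monomorphism) and connected (since $BSU(n)\pcom$ is $2$--connected). The Eilenberg--Moore equivalence $C^*(F;\F_p)\simeq\F_p\otimes^{\mathbb L}_{C^*(BSU(n);\F_p)}C^*(|\Ll|\pcom;\F_p)$ is available here, as $BSU(n)\pcom$ is simply connected and $|\Ll|\pcom$ has a finite $p$--group as fundamental group; combined with the two--out--of--three property of the Gorenstein condition along a fibration, it follows that $C^*(F;\F_p)\to\F_p$ is Gorenstein, say of shift $d$. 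An $\F_p$--finite Gorenstein cochain algebra is a Poincar\'e duality algebra: $\operatorname{RHom}_{C^*(F)}(\F_p,C^*(F))\simeq\Sigma^{d}\F_p$ forces the $\F_p$--linear dual of $C^*(F;\F_p)$ to be $\Sigma^{d}C^*(F;\F_p)$ as a module over itself, which unwinds to a perfect cup product pairing on $H^*(F;\F_p)$ with fundamental class in degree $d=\dim SU(n)=n^2-1$. The same argument already shows that the fibre of any unitary embedding into $BU(m)\pcom$ enjoys Poincar\'e duality; the content of the refinement is to arrange the target to be $BSU(n)\pcom$.

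The main obstacle is not the Chern--class manipulation, which is routine, but importing the Dwyer--Greenlees--Iyengar theory of Gorenstein ring spectra into this topological setting and checking that all of its hypotheses hold for $\F_p$--complete classifying spaces of $p$--local finite groups: that the relevant Eilenberg--Moore spectral sequences converge, that $C^*(|\Ll|\pcom;\F_p)$ is genuinely small over $C^*(BU(m);\F_p)$ so that Gorenstein ascent applies, and that the two--out--of--three principle and the bookkeeping of shifts remain valid in the presence of a nontrivial (though finite $p$--) fundamental group. Keeping track of these finiteness, nilpotence and connectivity conditions is where the real work of the proof is concentrated.
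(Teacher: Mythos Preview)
Your plan reverses the logical direction of the paper: you want to first establish that $C^*(|\Ll|\pcom;\F_p)\to\F_p$ is Gorenstein and then deduce Poincar\'e duality on the fibre, whereas the paper proves Poincar\'e duality on the fibre first and only afterwards deduces the Gorenstein condition. This reversal introduces a genuine gap in your Step~1. The ascent result you invoke (Proposition~8.10 of Dwyer--Greenlees--Iyengar) does \emph{not} say that smallness of $C^*(|\Ll|\pcom;\F_p)$ over $C^*(BU(m);\F_p)$ together with the Gorenstein property of the base suffices; it also requires the fibre augmentation $C^*(F_0;\F_p)\to\F_p$ to be Gorenstein. For a finite group $G$ this extra hypothesis is free, because the fibre $SU(n)/G$ is a smooth closed orientable manifold and hence satisfies Poincar\'e duality; for a $p$--local finite group the fibre is not a manifold, and its Poincar\'e duality is precisely what the theorem asserts. (Smallness over a Gorenstein base alone cannot force the Gorenstein condition: already the $\F_p$--algebra $\F_p[x,y]/(x,y)^2$ is small over $\F_p$ but not Gorenstein.) Your argument is therefore circular: Step~3 relies on Step~1, and Step~1, read with the correct hypotheses, needs the conclusion of Step~3.

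The paper breaks this circle by proving Poincar\'e duality on the fibre $F$ directly and geometrically. A comparison of Serre spectral sequences, organized by the characteristic $(S,S)$--biset $\Omega$, identifies $H^*(F;\F_p)$ with the subalgebra of $\Ff$--stable elements in $H^*(SU(n)/S;\F_p)$, where $SU(n)/S$ is an honest closed orientable manifold. One then checks that the fundamental class of $SU(n)/S$ is itself $\Ff$--stable and that the biset--induced idempotent $\overline{\Omega}^*$ is linear over the stable subalgebra; this lets one transport the non--degenerate cup--product pairing from $H^*(SU(n)/S;\F_p)$ to its stable summand. Only after this is the Gorenstein property of $C^*(|\Ll|\pcom;\F_p)$ obtained, via the very DGI ascent you cite --- now with both of its hypotheses in hand.
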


The motivation for this theorem comes from duality properties of cohomology rings of finite groups. 
Benson--Carlson duality for cohomology rings of finite groups \cite{BC} shows that if the $\F_p$--cohomology ring of
a finite group is Cohen--Macaulay, then it is Gorenstein. On the other hand, the computation by Grbi\'c in \cite{Gr} 
of the $\F_2$--cohomology rings of the exotic $2$--local finite groups constructed in Levi--Oliver \cite{LO} shows that
these rings are Gorenstein. This suggested that an extension of Benson--Carlson duality should hold for 
$p$--local finite groups.

Dwyer, Greenlees and Iyengar \cite{DGI} viewed Benson--Carlson duality and other phenomena in the framework of ring spectra, 
showing that several dualities that appear in algebra and topology are particular cases of a more general
situation. From this point of view, Benson--Carlson duality is a consequence of the fact that the augmentation 
map $C^*(BG;\F_p) \to \F_p$ is Gorenstein in the sense of Definition 8.1 in \cite{DGI}. 

A careful analysis of this fact shows that it is a byproduct of having an injective
homomorphism $G \to SU(n)$. In this case $SU(n)/G$ satisfies Poincar\'e duality, but more importantly, only
mod $p$ Poincar\'e duality is needed to show the Gorenstein condition. This is the motivation for the theorem
mentioned above, and this is sufficient to prove a duality theorem for cohomology rings of $p$--local finite 
groups.

\begin{introtheorem}
Let $(S,\Ff,\Ll)$ be a $p$--local finite group. Then the augmentation map $C^*(|\Ll| \pcom; \F_p ) \to \F_p $ is 
Gorenstein in the sense of Dwyer--Greenlees--Iyengar \cite{DGI}. Therefore if $H^*(|\Ll| \pcom;\F_p)$ is Cohen--Macaulay, then it is Gorenstein. 
\end{introtheorem}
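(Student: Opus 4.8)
The plan is to deduce this from the homotopy monomorphism supplied by the previous theorem, by running the Gorenstein ascent machinery of Dwyer--Greenlees--Iyengar \cite{DGI}, exactly as one proves the Gorenstein property for $C^*(BG;\F_p)$ out of a faithful unitary representation of $G$. So, fix a homotopy monomorphism $\theta\colon |\Ll|\pcom\to BSU(n)\pcom$ whose homotopy fibre $F$ is $\F_p$--finite and satisfies Poincar\'e duality of some formal dimension $d$. Since $|\Ll|$ and $BSU(n)$ are $\F_p$--good, applying $C^*(-;\F_p)$ produces a sequence of augmented $\F_p$--algebras
\[ R:=C^*(BSU(n);\F_p)\xrightarrow{\theta^*}S:=C^*(|\Ll|;\F_p)\xrightarrow{\varepsilon}\F_p, \]
whose composite is the augmentation of $R$ and in which $\varepsilon$ is induced by the basepoint of $|\Ll|$.

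Next I would collect the three inputs for ascent. First, $H^*(BSU(n);\F_p)=\F_p[c_2,\dots,c_n]$ is a polynomial, hence Noetherian and Gorenstein, graded ring; consequently $\F_p$ is proxy--small over $R$ and the augmentation $R\to\F_p$ is Gorenstein in the sense of \cite{DGI} (one of the basic examples there). Second, $BSU(n)\pcom$ is simply connected, so the Eilenberg--Moore theorem identifies the derived base change with the cochains of the fibre, $\F_p\otimes^{L}_{R}S\simeq C^*(F;\F_p)$, as augmented $\F_p$--algebras. Third, $F$ is connected with $\pi_1(F)\cong\pi_1(|\Ll|\pcom)$ a finite $p$--group, because $\pi_2(BSU(n)\pcom)=0$; in particular $F$ is mod $p$ orientable, so Poincar\'e duality for $F$ says precisely that $C^*(F;\F_p)\to\F_p$ is Gorenstein, and $\F_p$ is proxy--small over $C^*(F;\F_p)$ since $F$ is $\F_p$--finite.

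With these in place, the Gorenstein ascent of \cite{DGI} applies to $R\to S\to\F_p$: once $R\to\F_p$ is Gorenstein, the relative fibre $\F_p\otimes^{L}_{R}S\simeq C^*(F;\F_p)$ is Gorenstein over $\F_p$, and $\F_p$ is proxy--small over $R$ and over $S$, one concludes that $S\to\F_p$ is Gorenstein. Proxy--smallness of $\F_p$ over $S=C^*(|\Ll|;\F_p)$ is where the finiteness of $\Ff$ enters: $H^*(|\Ll|;\F_p)$ is the ring of $\Ff$--stable elements, $H^*(BS;\F_p)$ is finitely generated over it, and hence $H^*(|\Ll|;\F_p)$ is Noetherian (indeed a finitely generated $\F_p$--algebra), which is exactly the criterion guaranteeing proxy--smallness. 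Thus $C^*(|\Ll|;\F_p)\to\F_p$ is Gorenstein. For the last assertion, a Gorenstein augmentation with Noetherian cohomology ring $H:=H^*(|\Ll|;\F_p)$ forces, by \cite{DGI}, the local cohomology $H^*_{\m}(H)$ to be a shift of the Matlis dual of $H$; if moreover $H$ is Cohen--Macaulay then $H^*_{\m}(H)$ is concentrated in the single degree equal to the Krull dimension, which is precisely the statement that $H$ is a Gorenstein ring, and it carries the attendant functional equation for the Poincar\'e series.

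Since the homological algebra is essentially a transcription of \cite{DGI}, the main obstacle --- the construction of this homotopy monomorphism with $\F_p$--finite Poincar\'e duality fibre --- has already been handled in the previous theorem. What still needs attention is mostly bookkeeping: extracting the ascent statement in the generality of an arbitrary fibration with $\F_p$--finite Poincar\'e duality fibre, rather than the special case of an inclusion of classifying spaces, though the proof is the same; checking that the Eilenberg--Moore identification $\F_p\otimes^{L}_{R}S\simeq C^*(F;\F_p)$ holds at the level of $\F_p$--module spectra and not merely of spectral sequences; and recording explicitly the Noetherianity of the ring of $\Ff$--stable elements, on which proxy--smallness of $\F_p$ over $S$ rests. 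By contrast, mod $p$ orientability of $F$ is automatic, since $\pi_1(F)$ is a finite $p$--group and therefore acts trivially on $H^{d}(F;\F_p)$.
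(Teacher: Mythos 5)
Your overall strategy coincides with the paper's: use the homotopy monomorphism $|\Ll|\pcom\to BSU(m)\pcom$ with $\F_p$--finite Poincar\'e duality fibre $F$, and run Gorenstein ascent along the resulting cofibration $C^*(BSU(m);\F_p)\to C^*(|\Ll|;\F_p)\to\F_p$, identifying $\F_p\otimes^L_{C^*(BSU(m);\F_p)}C^*(|\Ll|;\F_p)$ with $C^*(F;\F_p)$ via Eilenberg--Moore. The local cohomology argument for the ``Cohen--Macaulay implies Gorenstein'' consequence also matches the paper's.

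However, there is a genuine gap in your treatment of proxy--smallness of $\F_p$ over $S=C^*(|\Ll|;\F_p)$. You assert that Noetherianity of $H^*(|\Ll|;\F_p)$ (which indeed holds, as a ring of $\Ff$--stable elements) is ``exactly the criterion guaranteeing proxy--smallness.'' No such criterion appears in \cite{DGI}; Noetherianity of the coefficient ring alone does not yield proxy--smallness of the residue field in that framework, and in fact \cite{DGI} goes to some trouble to establish proxy--smallness for $C^*(BG;\F_p)$ by geometric means rather than by a finiteness condition on cohomology. The paper instead obtains proxy--smallness of $C^*(|\Ll|;\F_p)\to\F_p$ from \cite[Proposition 4.18]{DGI}: the augmentation $C^*(BSU(m);\F_p)\to\F_p$ is \emph{small} (not merely proxy--small, since $H^*(BSU(m);\F_p)$ is polynomial), and $C^*(F;\F_p)\to\F_p$ is cosmall (hence proxy--small) by \cite[Remark 5.5(2)]{DGI} because $H^*(F;\F_p)$ is finite--dimensional; Proposition 4.18 then transfers proxy--smallness to $C^*(|\Ll|;\F_p)\to\F_p$. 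You should replace your Noetherianity claim by this argument.

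A second, smaller omission: the Gorenstein ascent you invoke, in the form proved in \cite[Proposition 8.10]{DGI}, requires as a hypothesis that $C^*(|\Ll|;\F_p)$ is \emph{small} over $C^*(BSU(m);\F_p)$. The paper establishes this from the admissibility of the fibration $F\to|\Ll|\pcom\to BSU(m)\pcom$ in the sense of Dwyer--Wilkerson; this should be stated explicitly, since the statement you give of the ascent theorem (``$\F_p$ proxy--small over $R$ and over $S$ and relative fibre Gorenstein'') is not the hypothesis \cite{DGI} actually uses and would need independent justification. Your observation that $(F,\F_p)$ is of Eilenberg--Moore type (connected, $\pi_1(F)$ a finite $p$--group, $\F_p$--finite) is correct and is the right hypothesis to make the derived base--change identification legitimate.
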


\subsection{Acknowledgements}

The authors are grateful to John Greenlees for suggesting that the existence 
of homotopy monomorphisms should have duality consequences on the cohomology ring.

\subsection{Acknowledgements of financial support}

The first two authors are partially supported by FEDER/MED (grants MTM2013-42293-P and MTM2016-80439-P),
and by SEP-CONACYT (grant 242186). The second author acknowledges financial support from the Spanish Ministry
of Economy and Competitiveness through the ``Mar\'ia de Maeztu'' Programme for Units of Excellence in R\&D (MDM-2014-0445).

\section{Preliminaries on $p$--local finite groups}
\label{section:preliminaries}

In this section, we recall the notion of a $p$--local finite group introduced by Broto, Levi and
Oliver in \cite{BLO1}. 
One of the ingredients is the concept of saturated fusion system introduced by Puig \cite{P}.
Given subgroups $P$ and $Q$ of $S$ we denote by $\Hom_S(P,Q)$ the set of group homomorphisms $P \to Q$ that
are conjugations by an element of $S$ and by $\Inj(P,Q)$ the set of monomorphisms from $P$ to $Q$.

\begin{definition}
A fusion system $\Ff$ over a finite $p$--group $S$ is a subcategory of the category of groups 
whose objects are the subgroups of $S$ and such that the set of morphisms $\Hom_{\Ff}(P,Q)$ between two 
subgroups $P$ and $Q$ satisfies the following conditions:
\begin{enumerate}
    \item[(a)] $\Hom_S (P,Q) \subseteq \Hom_{\Ff}(P,Q) \subseteq \Inj(P,Q)$ for all $P,Q \leq S$.
    \item[(b)] Every morphism in $\Ff$ factors as an isomorphism in $\Ff$ followed by an inclusion.
\end{enumerate}
\end{definition}
 
\begin{definition}
Let $\Ff$ be a fusion system over a $p$--group $S$.
\begin{itemize}
    \item We say that two subgroups $P,Q \leq S$ are $\Ff$--conjugate if they are isomorphic in $\Ff$.
    \item A subgroup $P\leq S$ is fully centralized in $\Ff$ if $|C_S (P)| \geq |C_S(P')|$ for all 
    $P' \leq S$ which are $\Ff$--conjugate to $P$.
    \item A subgroup $P\leq S$ is fully normalized in $\Ff$ if $|N_S (P)| \geq |N_S(P')|$ for all 
    $P' \leq S$ which are $\Ff$--conjugate to $P$.
    \item $\Ff$ is a saturated fusion system if the following conditions hold:
    \begin{enumerate}
        \item [(I)] Each fully normalized subgroup $P \leq S$ is fully centralized and
        the group $\Aut_S(P)$ is a $p$--Sylow subgroup of $\Aut_{\Ff}(P)$.
        \item [(II)] If $P \leq S$ and $\varphi \in \Hom _{\Ff}(P,S)$ are such that
        $\varphi P$ is fully centralized, and if we set
\[ N_\varphi = \{ g\in N_S(P) \mid \varphi c_g \varphi^{-1} \in \Aut_S(\varphi P) \}, \]
        then there is $\overline{\varphi} \in \Hom_{\Ff}(N_\varphi ,S)$ such that $\overline{\varphi}_{|P} =\varphi$.
    \end{enumerate}
\end{itemize}
\end{definition}

The motivating example for this definition is the fusion system of a finite group $G$. For a fixed Sylow 
$p$--subgroup $S$ of $G$, let $\Ff_S(G)$ be the fusion system over $S$ defined by setting $\Hom_{\Ff_S(G)}(P,Q)=\Hom_G(P,Q)$.
This is a saturated fusion system.

In the following definition we use the notation $\Out_{\Ff}(P)= \Aut_{\Ff}(P)/\Inn(P)$.

\begin{definition}
Let $\Ff$ be a fusion system over a $p$--group $S$.
\begin{itemize}
\item A subgroup $P \leq S$ is $\Ff$--centric if $P$ and all its $\Ff$--conjugates contain their $S$--centralizers.
\item A subgroup $P \leq S$ is $\Ff$--radical if $\Out_{\Ff}(P)$ is $p$--reduced, that is, if $\Out_{\Ff}(P)$ has no
proper normal $p$--subgroup.
\end{itemize}
\end{definition}

We will use $\Ff^c$ to denote the full subcategory of $\Ff$ whose objects 
are the $\Ff$--centric subgroups and $\Ff^{cr}$ for the full subcategory
of $\Ff$--centric, $\Ff$--radical subgroups.

The following theorem is a version of Alperin's fusion theorem for saturated fusion systems (Theorem A.10 in \cite{BLO1}).

\begin{theorem}
\label{theorem:Alperin}
Let $\Ff$ be a saturated fusion system over $S$. Then for each
isomorphism $P \to P'$ in $\Ff$, there exist
sequences of subgroups of $S$,
\[ P=P_0,P_1,\ldots,P_k=P' \qquad \textrm{and} \qquad Q_1,Q_2,\ldots,Q_k \]
and morphisms $\varphi_i \in \Aut_{\Ff}(Q_i)$ such that the following hold.
\begin{enumerate}
    \item $Q_i$ is fully normalized, $\Ff$--centric and $\Ff$--radical for each $i$.
    \item $P_{i-1},P_i \leq Q_i$ and $\varphi_i (P_{i-1})=P_i$ for each $i$.
    \item $\varphi= \varphi_k \circ \varphi_{k-1} \circ \cdots \circ \varphi_1$.
\end{enumerate}
\end{theorem}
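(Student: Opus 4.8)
The plan is to prove this by downward induction on the index $[S:P]$: assuming the statement for every subgroup of $S$ of index strictly smaller than $[S:P]$, I would deduce it for $P$. The base case $P=S$ is handled directly. Since $|\varphi(P)|=|P|$, every isomorphism in $\Ff$ with source $S$ lands in $S$, so it is an element $\varphi\in\Aut_{\Ff}(S)$; and $S$ is fully normalized (it has no proper $\Ff$--conjugates), $\Ff$--centric (because $C_S(S)=Z(S)\leq S$), and $\Ff$--radical (because saturation axiom (I) makes $\Inn(S)=\Aut_S(S)$ a Sylow $p$--subgroup of $\Aut_{\Ff}(S)$, so $\Out_{\Ff}(S)$ has order prime to $p$, hence no nontrivial normal $p$--subgroup). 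One then takes $k=1$, $Q_1=S$ and $\varphi_1=\varphi$.

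The engine of the induction is an extension lemma: if $P\lneq S$ and $P^{\ast}\leq S$ is fully normalized and $\Ff$--conjugate to $P$, then there is a morphism $\psi\in\Hom_{\Ff}(N_S(P),N_S(P^{\ast}))$ with $\psi(P)=P^{\ast}$. To see this, choose any isomorphism $\psi_0\colon P\to P^{\ast}$ in $\Ff$; then $\psi_0\,\Aut_S(P)\,\psi_0^{-1}$ is a $p$--subgroup of $\Aut_{\Ff}(P^{\ast})$, and since $P^{\ast}$ is fully normalized, $\Aut_S(P^{\ast})$ is a Sylow $p$--subgroup of $\Aut_{\Ff}(P^{\ast})$, so some $\chi\in\Aut_{\Ff}(P^{\ast})$ conjugates $\psi_0\,\Aut_S(P)\,\psi_0^{-1}$ into $\Aut_S(P^{\ast})$. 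Setting $\psi=\chi\psi_0$, one checks that $N_{\psi}=N_S(P)$; since $P^{\ast}$ is also fully centralized by axiom (I), axiom (II) extends $\psi$ to a morphism $N_S(P)\to S$ with image inside $N_S(P^{\ast})$. The point is that $P\lneq S$ forces $P\lneq N_S(P)$, so $[S:N_S(P)]<[S:P]$ and the inductive hypothesis applies to $\psi$ regarded as an isomorphism onto its image; restricting the resulting decomposition of $\psi$ step by step to $P$ — the intermediate subgroups in the chain for $\psi$ all contain the successive images of $P$, so the containments $P_{i-1},P_i\leq Q_i$ are automatic — yields a decomposition of the isomorphism $P\to P^{\ast}$ of exactly the required form.

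Using this, I would first reduce to the case of automorphisms of fully normalized subgroups. Given an isomorphism $\varphi\colon P\to P'$ in $\Ff$, choose $P^{\ast}$ fully normalized and $\Ff$--conjugate to $P$ (hence to $P'$). If $P\neq P^{\ast}$ then $P\lneq S$, so the extension lemma provides a decomposition of the isomorphism $P\to P^{\ast}$; applying the same to $P'$ gives a decomposition of $P'\to P^{\ast}$, and inverting each automorphism $\varphi_i$ and reversing the chain gives one of $P^{\ast}\to P'$. Writing $\varphi$ as the composite
\[ P \longrightarrow P^{\ast} \xrightarrow{\ \theta\ } P^{\ast} \longrightarrow P', \qquad \theta\in\Aut_{\Ff}(P^{\ast}), \]
and concatenating the three lists of $\varphi_i$'s, the problem is reduced to decomposing an automorphism $\theta$ of a fully normalized subgroup, which we rename $P$.

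The heart of the proof — and the step I expect to be the main obstacle — is this last reduction. For $\theta\in\Aut_{\Ff}(P)$ with $P$ fully normalized, one must show that either $P$ is already $\Ff$--centric and $\Ff$--radical (in which case $k=1$, $Q_1=P$, $\varphi_1=\theta$ works) or else $\theta$ is the restriction of an automorphism fixing $P$ of a strictly larger subgroup, to which downward induction applies. If $P$ is fully normalized but not $\Ff$--centric, then, using that a fully centralized subgroup is $\Ff$--centric precisely when $C_S(P)=Z(P)$, we get $C_S(P)\gneq Z(P)$; since $c_g=\id_P$ for $g\in C_S(P)$, one finds $P\cdot C_S(P)\leq N_{\theta}$, and axiom (II) extends $\theta$ over the strictly larger subgroup $P\cdot C_S(P)$. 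If $P$ is $\Ff$--centric but not $\Ff$--radical, let $\tilde K\trianglelefteq\Aut_{\Ff}(P)$ be the preimage of a nontrivial normal $p$--subgroup of $\Out_{\Ff}(P)$; being a normal $p$--subgroup of $\Aut_{\Ff}(P)$ it lies in the Sylow subgroup $\Aut_S(P)$, so $Q:=\{\,g\in N_S(P)\mid c_g\in\tilde K\,\}$ is a subgroup properly containing $P$, and normality of $\tilde K$ gives $\theta c_g\theta^{-1}\in\tilde K\leq\Aut_S(P)$ for every $g\in Q$, whence $Q\leq N_{\theta}$ and axiom (II) extends $\theta$ over $Q$. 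In both cases the extension has strictly smaller index, induction decomposes it, and restricting back to $P$ produces a chain with $P=P_0$, $P_k=P$ and $P_{i-1},P_i\leq Q_i$ whose constituent subgroups $Q_i$ are fully normalized, $\Ff$--centric and $\Ff$--radical. What then remains is routine: verifying the identities $N_{\psi}=N_S(P)$ and $Q\leq N_{\theta}$, the normalizer growth and Sylow facts used above, the order comparison giving $C_S(P)\gneq Z(P)$, and the bookkeeping of which chains are reversed and which $\varphi_i$ inverted when the decompositions are concatenated.
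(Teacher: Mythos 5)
The paper states this result as a known theorem (Theorem A.10 of Broto--Levi--Oliver \cite{BLO1}) and does not include a proof, so there is nothing internal to compare against. Your argument is the standard one from that reference: downward induction on $[S:P]$, reduction to an automorphism of a fully normalized representative via the extension-to-$N_S(P)$ lemma, and the two-case escape (enlarge by $C_S(P)$ if not centric; enlarge via the preimage of a nontrivial normal $p$-subgroup of $\Out_{\Ff}(P)$ if not radical). All the key points are handled correctly — the base case $P=S$ being centric radical, the identification $N_\psi=N_S(P)$ after a Sylow conjugation in $\Aut_{\Ff}(P^*)$, the strict inclusion $P\lneq N_S(P)$ for $P\lneq S$ driving the induction, and the observation that restricting a decomposition of a larger morphism down to $P$ preserves the required containments $P_{i-1},P_i\leq Q_i$ — so this is a correct and complete reconstruction of the usual proof.
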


The notion of a centric linking system is the extra structure needed in the definition of a $p$--local
finite group to obtain a classifying space which behaves like $BG \pcom$ for a finite group $G$.

\begin{definition}
\label{Linking}
Let $\Ff$ be a fusion system over a $p$--group $S$. A centric linking 
system associated to $\Ff$ is a category $\Ll$ whose objects are the 
$\Ff$--centric subgroups of $S$, together with a functor
\[ \pi \colon \Ll \longrightarrow \Ff ^c \]
and ``distinguished" monomorphisms $P \stackrel {\delta_P}{\longrightarrow} \Aut_{\Ll}(P)$ 
for each $\Ff$--centric subgroup $P \leq S$, which satisfy the following conditions.
\begin{enumerate}
        \item[(A)] $\pi$ is the identity on objects and surjective on morphisms. More precisely, for each
        pair of objects $P$, $Q$ in $\Ll$, $Z(P)$ acts freely on $\Mor_{\Ll}(P,Q)$ by composition (upon identifying
        $Z(P)$ with $\delta_P (Z(P)) \leq \Aut_{\Ll}(P)$) and $\pi$ induces a bijection
        \[ \Mor_{\Ll}(P,Q)/Z(P) \stackrel{\cong}{\longrightarrow} \Hom_{\Ff}(P,Q).  \]
        \item[(B)] For each $\Ff$--centric subgroup $P \leq S$ and each $g \in P$, the functor $\pi$ sends
        $\delta_P (g)$ to $c_g \in \Aut_{\Ff}(P).$
        \item[(C)] For each $f \in \Mor_{\Ll}(P,Q)$ and each $g \in P$, the following square commutes in $\Ll$
        \[
        \diagram
        P \rto^f \dto_{\delta_P (g)} & Q \dto^{\delta_Q(\pi(f)(g))} \\
        P \rto_f & Q 
        \enddiagram
        \]
\end{enumerate}
\end{definition}

\begin{definition}
A $p$--local finite group is a triple $(S,\Ff,\Ll)$, where $\Ff$ 
is a saturated fusion system over a finite $p$--group $S$ and 
$\Ll$ is a centric linking system associated to $\Ff$. The 
classifying space of the $p$--local finite group $(S,\Ff,\Ll)$ 
is the space $|\Ll | \pcom$.
\end{definition}

A theorem of Chermak \cite{C} (see also Oliver \cite{O}) states that every saturated fusion system admits a centric linking system  
and that it is unique up to isomorphism of centric linking systems. In particular, the classifying space of a $p$--local
finite group is determined up to homotopy equivalence by the saturated fusion system.

In many cases it is convenient to restrict the fusion system to certain subcategories. 
It was shown in Broto--Castellana-Grodal--Levi--Oliver \cite{BCGLO1} that one can consider certain full subcategories $\Ll_0$ 
of $\Ll$, such that the inclusion functor $\Ll_0\rightarrow \Ll$ induces a mod $p$ 
homotopy equivalence on nerves. In particular, one such is the full subcategory of 
$\Ff$--centric $\Ff$--radical subgroups of $S$.

Our main goal is to understand maps between classifying spaces. The key tool when 
studying maps between classifying spaces is the existence of mod $p$ homology 
decompositions. That is, we can reconstruct the classifying space of a $p$--local 
finite group up to $p$--completion as a homotopy colimit of classifying spaces of 
$p$--subgroups over the orbit category, $\Or(\Ff)$. The orbit category is the 
category whose objects are the subgroups of $S$ and whose morphisms are
\[ \Mor_{\Or(\Ff)}(P,Q)= \Rep_{\Ff}(P,Q)\stackrel{\emph{def}}{=}\Inn(Q)\setminus \Hom_{\Ff}(P,Q) . \]
Also, $\Or(\Ff^c)$ is the full subcategory of $\Or(\Ff)$ whose objects are the $\Ff$--centric subgroups of
$S$. The next proposition is part of Proposition 2.2 in \cite{BLO1}.

\begin{proposition}
Fix a saturated fusion system $\Ff$ and an associated centric
linking system $\Ll$, and let $\widetilde{\pi} \colon \Ll \to
\Or(\Ff^{c})$ be the projection functor. Let
\[ \widetilde{B} \colon \Or(\Ff^{c}) \longrightarrow \Top \]
be the left Kan extension along $\widetilde{\pi}$ of the constant
functor $\Ll \to \Top$ that sends every object to the one-point
topological space. Then $\widetilde{B}$ is a homotopy lifting of the homotopy functor $P \mapsto BP$, and
\[ |\Ll| \simeq \hocolim{\Or(\Ff^{c})}{} \, \widetilde{B}. \]
\end{proposition}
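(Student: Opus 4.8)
The plan is to reduce the statement to two independent facts: \textbf{(i)} that homotopy colimits are unchanged under homotopy left Kan extension, which gives the displayed equivalence for purely \emph{formal} reasons, and \textbf{(ii)} that the value $\widetilde{B}(P)$ on an $\Ff$--centric subgroup $P$ is homotopy equivalent to $BP$, compatibly with the homotopy--functor structure $P\mapsto BP$ on $\Or(\Ff^{c})$; this second part is where the linking system axioms enter.

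For (i), recall that the homotopy left Kan extension $\widetilde{B}=\widetilde{\pi}_{!}(\underline{\ast})$ of the constant functor $\underline{\ast}\colon\Ll\to\Top$ with value a point is computed objectwise as the nerve $|\widetilde{\pi}\downarrow P|$ of the overcategory $\widetilde{\pi}\downarrow P$, whose objects are pairs $(Q,\alpha)$ with $Q$ an $\Ff$--centric subgroup and $\alpha\in\Rep_{\Ff}(Q,P)$, and whose morphisms $(Q,\alpha)\to(Q',\alpha')$ are the $g\in\Mor_{\Ll}(Q,Q')$ with $\alpha'\circ\widetilde{\pi}(g)=\alpha$. Since the homotopy colimit over $\Or(\Ff^{c})$ is itself a homotopy left Kan extension (along $\Or(\Ff^{c})\to\ast$) and homotopy left Kan extensions compose, factoring $\Ll\xrightarrow{\widetilde{\pi}}\Or(\Ff^{c})\to\ast$ gives
\[ \hocolim{\Or(\Ff^{c})}{}\,\widetilde{B}\;\simeq\;\hocolim{\Ll}{}\,\underline{\ast}\;=\;|\Ll|, \]
which is the asserted equivalence.

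For (ii), fix $P$ and examine the object $(P,\id_{P})$ of $\widetilde{\pi}\downarrow P$. Its endomorphisms are the $h\in\Aut_{\Ll}(P)$ with $\pi(h)\in\Inn(P)$; by axiom (A) the subset $\pi^{-1}(\Inn(P))\le\Aut_{\Ll}(P)$ has order $|Z(P)|\cdot[P:Z(P)]=|P|$, and by axiom (B) it contains $\delta_{P}(P)$, which is of the same order, so $\pi^{-1}(\Inn(P))=\delta_{P}(P)\cong P$. Thus the full subcategory on $(P,\id_{P})$ is the one--object groupoid $\mathcal{B}(P)$, with nerve $BP$, and I would show that the inclusion $\iota\colon\mathcal{B}(P)\hookrightarrow\widetilde{\pi}\downarrow P$ is a homotopy equivalence by Quillen's Theorem A. For an object $(Q,\alpha)$, the relevant comma category has for objects the morphisms $(Q,\alpha)\to(P,\id_{P})$, i.e. the fibre $\{g\in\Mor_{\Ll}(Q,P):\widetilde{\pi}(g)=\alpha\}$, which is nonempty because $\widetilde{\pi}$ is surjective on morphisms (being $\pi$ followed by the quotient by inner automorphisms), with morphisms given by post--composition with $\delta_{P}(P)$. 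One checks this $\delta_{P}(P)$--action is free and transitive, so the comma category is the translation category of a $P$--torsor, hence contractible; Theorem A then gives $\widetilde{B}(P)=|\widetilde{\pi}\downarrow P|\simeq BP$, and one verifies this equivalence is natural in $P$ with respect to $\Or(\Ff^{c})$ and the homotopy functor $P\mapsto BP$, so that $\widetilde{B}$ is a homotopy lifting.

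The one genuinely non--formal point, and the step I expect to require the most care, is the verification that post--composition by $\delta_{P}(P)$ acts freely and transitively on the fibre. For transitivity: two lifts $g,g'$ of $\alpha$ have $\pi(g'),\pi(g)$ differing by an inner automorphism $c_{x}$ of $P$, so $\delta_{P}(x)^{-1}g'$ lies in the $\pi$--fibre of $\pi(g)$, on which $Z(Q)$ acts freely and transitively by axiom (A); writing $\delta_{P}(x)^{-1}g'=g\,\delta_{Q}(z)$ and moving $\delta_{Q}(z)$ across $g$ as $\delta_{P}(\pi(g)(z))$ by axiom (C) exhibits $g'$ in the $\delta_{P}(P)$--orbit of $g$. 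For freeness: if $\delta_{P}(y)g=g$, applying $\pi$ shows $y\in C_{P}(\pi(g)(Q))$, and since $\pi(g)(Q)$ is $\Ff$--centric (being $\Ff$--conjugate to the $\Ff$--centric group $Q$) this centralizer equals $Z(\pi(g)(Q))$, so $y=\pi(g)(z)$ with $z\in Z(Q)$; axiom (C) turns $\delta_{P}(y)g=g$ into $g\,\delta_{Q}(z)=g$, and the freeness clause of axiom (A) forces $z=1$, hence $y=1$. No input beyond axioms (A), (B), (C) and the definition of $\Ff$--centricity is needed.
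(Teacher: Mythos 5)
Your proposal is correct, and it is essentially the argument from Broto--Levi--Oliver \cite{BLO1}, which this paper simply cites without reproducing a proof. The formal reduction in (i) (composing homotopy left Kan extensions) and the identification in (ii) of $\widetilde{\pi}\downarrow P$ with $\mathcal{B}(P)$ via the torsor structure on fibres of $\widetilde{\pi}$ — using axioms (A), (B), (C) and $\Ff$--centricity exactly as you do — is the standard proof; your freeness and transitivity verifications are sound.
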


\section{Complex representations of fusion systems}
\label{characters}

Let $(S,\Ff,\Ll)$ be a given $p$--local finite group. In this section we study complex 
representations of the finite $p$--group $S$ which are compatible with the morphisms 
in the fusion category $\Ff$ in a sense that will be made precise. This description 
coincides with the one given in Section 3 of Cantarero--Castellana \cite{CC}. When dealing with a 
finite group $G$, this is described in Jackson \cite{J}. 
 
Given a finite group $G$, we denote by $\Rep_n(G)=\Rep(G,U(n))$ the set of isomorphism
classes of $n$--dimensional complex representations of $G$. If $ \rho$ is 
an $n$--dimensional representation of $G$, then $\chi_{\rho}$ denotes its associated 
character function.

\begin{definition}
Let $\Ff$ be a fusion system over $S$. An $n$--dimensional complex representation $\rho$ of $S$ is 
fusion-preserving if $\rho_{|P} = \rho_{|f(P)} \circ f$ in $\Rep_n(P)$ for any $P\leq S$ and 
any $ f \in \Hom_{\Ff}(P,S)$.
\end{definition}

We denote by $\Rep_n(\Ff)$ the set of isomorphism classes of $n$--dimensional complex representations of 
$S$ which are fusion-preserving. It is clear that if two representations of $S$ are isomorphic and one
of them is fusion-preserving, so is the other one. The following lemma gives an alternative description 
of this set.

\begin{lemma}
\label{respectfusion}
Let $\Ff$ be a fusion system over $S$. The composition 
\[ \higherlim{\Ff}{} \Rep _n (P) \subseteq \prod_{P\leq S} \Rep_n(P) \rightarrow \Rep_n(S) \]
of the inclusion and the projection map is injective and has image $\Rep_n(\Ff)$.
\end{lemma}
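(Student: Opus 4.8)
The plan is to make the inverse limit explicit. Recall that an element of $\higherlim{\Ff}{}\Rep_n(P)$ is a family $(\rho_P)_{P\leq S}$ with $\rho_P\in\Rep_n(P)$ such that $f^*\rho_Q=\rho_P$ for every morphism $f\in\Hom_{\Ff}(P,Q)$, where $f^*$ denotes restriction of representations along $f$; since restriction is well defined on isomorphism classes this makes sense, and the map in the statement sends such a family to its $S$--component $\rho_S$. Injectivity is then immediate: for every $P\leq S$ the inclusion $\iota_P\colon P\hookrightarrow S$ is conjugation by the identity, hence lies in $\Hom_{\Ff}(P,S)$ by axiom (a) of a fusion system, so compatibility of the family forces $\rho_P=\iota_P^*\rho_S$, and the whole family is recovered from $\rho_S$.

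For the description of the image I would argue in two directions. First, given a compatible family $(\rho_P)$, I check that $\rho_S$ is fusion--preserving. Fix $P\leq S$ and $f\in\Hom_{\Ff}(P,S)$, and use axiom (b) to factor $f=\iota_{f(P)}\circ\bar f$ with $\bar f\in\Iso_{\Ff}(P,f(P))$. Applying compatibility to $\iota_P$, to $\bar f$, and to $\iota_{f(P)}$ in turn, together with transitivity of restriction, one obtains
\[ \iota_P^*\rho_S=\rho_P=\bar f^*\rho_{f(P)}=\bar f^*\bigl(\iota_{f(P)}^*\rho_S\bigr)=\rho_{S|f(P)}\circ f, \]
so that $\rho_{S|P}=\rho_{S|f(P)}\circ f$, which is precisely the fusion--preserving condition. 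Conversely, given $\rho\in\Rep_n(\Ff)$, I set $\rho_P:=\rho_{|P}$ for all $P\leq S$ and verify that this is a compatible family. For $f\in\Hom_{\Ff}(P,Q)$, factor it inside $Q$ as $f=\iota^{Q}_{f(P)}\circ\bar f$ with $\bar f\in\Iso_{\Ff}(P,f(P))$; then $\iota_Q\circ f\in\Hom_{\Ff}(P,S)$, and since this composite corestricts to $\bar f$, the fusion--preserving property of $\rho$ gives $\rho_{|P}=\bar f^*\bigl(\rho_{|f(P)}\bigr)$. Combining this with transitivity of restriction, $\rho_{|f(P)}=\bigl(\iota^{Q}_{f(P)}\bigr)^*\rho_{|Q}$, yields $f^*\rho_Q=\bar f^*\bigl(\rho_{|f(P)}\bigr)=\rho_{|P}=\rho_P$. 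Thus $(\rho_P)$ lies in $\higherlim{\Ff}{}\Rep_n(P)$ and clearly maps to $\rho_S=\rho$, so the image is exactly $\Rep_n(\Ff)$.

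There is no genuine obstacle here; the single point that requires care is reconciling the two shapes of the condition in play --- namely ``$\rho_{|P}=\rho_{|f(P)}\circ f$ only for morphisms $f$ landing in $S$'', which defines $\Rep_n(\Ff)$, versus ``$f^*\rho_Q=\rho_P$ for all morphisms of $\Ff$'', which defines the inverse limit. This is bridged entirely by the factorization axiom (b), by the fact that $S$--conjugations (in particular inclusions) are morphisms of $\Ff$, and by transitivity of restriction. One should also keep in mind throughout that all equalities take place at the level of isomorphism classes, i.e.\ in the sets $\Rep_n(-)$.
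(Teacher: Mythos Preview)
Your proof is correct and follows essentially the same approach as the paper's: injectivity comes from the fact that inclusions $P\hookrightarrow S$ lie in $\Ff$, and the description of the image is the observation that the fusion--preserving condition is precisely compatibility of the family $\{\rho_{|P}\}_P$. The paper dispatches the latter in one line (``by definition''), whereas you carefully unpack the passage between morphisms $P\to S$ and general morphisms $P\to Q$ via the factorization axiom; this extra care is fine, though note that for the forward direction you do not actually need axiom~(b), since compatibility for morphisms $f\in\Hom_{\Ff}(P,S)$ already gives $\rho_{|P}=\rho_P=f^*\rho_S$ directly.
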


\begin{proof}
Given $Q \leq S$, an element $( \rho_P )_P$ in the limit must satisfy $\rho_Q = (\rho_S)_{|Q}$ 
since the inclusion $Q \to S$ is a morphism in $\Ff$. Therefore injectivity is clear. 
By definition, a representation $\rho$ of $S$ is fusion-preserving if and only if the element 
$( \rho_{|P} )_P$ belongs to $\higherlim{\Ff}{} \Rep_n (P)$, hence surjectivity follows. 
\end{proof}

\begin{remark}
\label{remark-character-fusion-preserving}
Since two representations are isomorphic if and only if their characters are equal, we can conclude that $\rho$ 
is fusion-preserving if and only if $\chi_{\rho}(g)$ equals $\chi_{\rho}(g')$ whenever there is a morphism $f$ in $\Ff$ 
such that $f(g)=g'$. Hence we can also think of $\Rep_n(\Ff)$ as the set of $n$--dimensional characters of $S$
which are fusion-preserving in that sense.
\end{remark}

\begin{example}
\label{example:regular}
Let $\reg$ be the regular representation of $S$. It has the property that $\chi_{\reg}(g)=0$ if $g \neq e$ and $\chi_{\reg}(e)=|S|$. 
Using \fullref{remark-character-fusion-preserving}, it is straightforward to check that $\reg \in \Rep_{|S|}(\Ff)$ for any fusion
system over $S$. Any trivial representation of $S$ is also fusion-preserving.
\end{example}

\begin{example}
\label{example:symmetric}
Let $\Sigma_3$ be the symmetric group on three letters, $S$ the subgroup generated
by $(1,2,3)$, which is a $3$-Sylow subgroup of $\Sigma_3$, and $\Ff = \Ff_S(\Sigma_3)$.
The trivial representation and the reduced regular representation of $S$ are fusion-preserving.
\end{example} 

When constructing fusion-preserving representations, it may be convenient to restrict to the
orbit category and to the family of centric or centric radical subgroups in $\Ff$ (see also
Remark 3.2 in Cantarero--Castellana \cite{CC}).

\begin{proposition}
\label{cr}
Let $\Ff$ be a saturated fusion system over $S$. Then
\[ \higherlim{\Ff^{cr}}{} \Rep _n (P) = \higherlim{\Ff^c}{} \Rep _n (P) = \Rep_n(\Ff). \]
Moreover
\[ \higherlim{\Or(\Ff^{cr})}{} \Rep _n (P) \cong \Rep_n(\Ff). \]
\end{proposition}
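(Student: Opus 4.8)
The plan is to first establish the two equalities among inverse limits over the fusion categories $\Ff^{cr}$, $\Ff^c$ and $\Ff$, and then to handle the passage to the orbit category $\Or(\Ff^{cr})$ separately. For the first chain of equalities I would argue via Alperin's fusion theorem (\fullref{theorem:Alperin}). Since $\Ff^{cr}$ is a full subcategory of $\Ff^c$, which is in turn a full subcategory of $\Ff$, there are obvious restriction maps
\[ \higherlim{\Ff}{} \Rep_n(P) \longrightarrow \higherlim{\Ff^c}{} \Rep_n(P) \longrightarrow \higherlim{\Ff^{cr}}{} \Rep_n(P). \]
By \fullref{respectfusion}, the leftmost group is $\Rep_n(\Ff)$, identified with a subset of $\Rep_n(S)$ via restriction to $S$; note $S$ is an object of $\Ff^c$ (and of $\Ff^{cr}$, being fully normalized and with trivial outer automorphisms giving no normal $p$-subgroup issue — or more safely, $S\in\Ff^c$ always), so each of these restriction maps is injective by the same argument as in the proof of \fullref{respectfusion}: a compatible family is determined by its value on $S$. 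Thus it suffices to show that the restriction map $\higherlim{\Ff^{cr}}{}\Rep_n(P)\to\Rep_n(S)$ has image contained in $\Rep_n(\Ff)$, i.e.\ that a family $\{\rho_P\}$ compatible over $\Ff^{cr}$ automatically satisfies $\rho_{|P}=\rho_{|f(P)}\circ f$ for every $f\in\Hom_\Ff(P,S)$.

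For that, I would use \fullref{remark-character-fusion-preserving}: it is enough to show $\chi_\rho(g)=\chi_\rho(f(g))$ for every morphism $f$ in $\Ff$ between cyclic subgroups and every generator $g$. Given such an $f\colon\langle g\rangle\to S$, decompose it using \fullref{theorem:Alperin} as a composite $f=\varphi_k\circ\cdots\circ\varphi_1$ (restricted appropriately) where each $\varphi_i\in\Aut_\Ff(Q_i)$ with $Q_i$ fully normalized, $\Ff$-centric and $\Ff$-radical, hence an object of $\Ff^{cr}$, and where the intermediate images $P_{i-1},P_i$ (here the relevant cyclic groups generated by the images of $g$) lie in $Q_i$. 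Compatibility of the family over $\Ff^{cr}$ applied to $\varphi_i\in\Mor_{\Ff^{cr}}(Q_i,Q_i)$ together with the inclusions $P_{i-1},P_i\hookrightarrow Q_i$ (which are morphisms in $\Ff$ between centric subgroups, hence in $\Ff^c$; if they need to be in $\Ff^{cr}$ one restricts attention to characters, which only depend on the $S$-values and on applying $\varphi_i$) gives $\chi_{\rho_{Q_i}}(x)=\chi_{\rho_{Q_i}}(\varphi_i(x))$ for $x\in Q_i$; chaining these identities along the sequence yields $\chi_\rho(g)=\chi_\rho(f(g))$. This shows $\higherlim{\Ff^{cr}}{}\Rep_n(P)\subseteq\Rep_n(\Ff)$, and since the reverse inclusions are clear (a fusion-preserving representation restricts to a compatible family over any subcategory), all three groups coincide.

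For the final isomorphism with $\higherlim{\Or(\Ff^{cr})}{}\Rep_n(P)$, the point is that $\Rep_n(P)=\Rep(P,U(n))$ is unchanged if we quotient morphisms by $\Inn(Q)$: since $\Rep_n(Q)$ already consists of isomorphism classes of representations, precomposing $\rho_Q\colon Q\to U(n)$ with an inner automorphism $c_q$ of $Q$ gives an isomorphic representation, so the functor $P\mapsto\Rep_n(P)$ on $\Ff^{cr}$ factors through $\Or(\Ff^{cr})=\Inn(-)\backslash\Ff^{cr}$. Concretely, a compatible family over $\Ff^{cr}$ is constant on $\Inn(Q)$-orbits of morphisms, so restriction along the projection $\Ff^{cr}\to\Or(\Ff^{cr})$ induces a bijection between the two inverse limits. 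Combined with the previous paragraph this gives $\higherlim{\Or(\Ff^{cr})}{}\Rep_n(P)\cong\Rep_n(\Ff)$.

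The main obstacle I anticipate is the bookkeeping in the Alperin-decomposition step: one must be careful that the intermediate subgroups produced by \fullref{theorem:Alperin} are the cyclic groups $\langle g_i\rangle$ relevant to the character computation and that the inclusions used to transport values between $Q_i$ and these cyclic subgroups are legitimate morphisms in the subcategory at hand — this is why phrasing everything in terms of characters (\fullref{remark-character-fusion-preserving}) rather than honest restriction of representations is the cleanest route, since characters are insensitive to conjugation and the only genuine constraints come from the automorphisms $\varphi_i$ of the $\Ff^{cr}$-objects $Q_i$. Everything else is formal.
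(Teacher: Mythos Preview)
Your proposal is correct and follows essentially the same route as the paper: identify an element of the inverse limit with its value $\rho_S\in\Rep_n(S)$, then use Alperin's fusion theorem to reduce an arbitrary $\Ff$-morphism to a composite of automorphisms $\varphi_i\in\Aut_\Ff(Q_i)$ with $Q_i\in\Ff^{cr}$, and finally observe that the functor $\Rep_n$ factors through the orbit category. The detour through characters is unnecessary, though: once the limit is identified with $\rho_S$, the restrictions $\rho_{|P}$ for arbitrary $P\leq S$ are defined directly from $\rho_S$, so you never need the inclusions $P_{i-1}\hookrightarrow Q_i$ to be morphisms in $\Ff^{cr}$ --- the only categorical input required is $\rho_{|Q_i}\circ\varphi_i\cong\rho_{|Q_i}$, which then restricts (as an isomorphism of representations of $P_{i-1}$) to give $\rho_{|P_i}\circ(\varphi_i|_{P_{i-1}})\cong\rho_{|P_{i-1}}$.
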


\begin{proof}
Since there are inclusions 
\[ \higherlim{\Ff}{} \Rep_n (P) \subseteq \higherlim{\Ff^c}{} \Rep _n (P) \subseteq \higherlim{\Ff^{cr}}{} \Rep _n (P), \]
it is enough to show that $\higherlim{\Ff^{cr}}{} \Rep_n (P) \subseteq \higherlim{\Ff}{} \Rep_n (P)$. 

Let $\rho \in \higherlim{\Ff^{cr}}{} \Rep_n (P)$. We have to show that $\rho_{|P} = \rho_{|P'} \circ \varphi$ in $\Rep_n(P)$ 
for any subgroups $P$, $P'$ of $S$ and any $\varphi \in \Hom_{\Ff}(P,P')$. Since this holds for inclusions, we may assume that $\varphi$ is
an isomorphism. By \fullref{theorem:Alperin}, there exists  a sequence of subgroups $P=P_0,P_1,\ldots,P_k=P'$ 
of $S$ and a sequence of $\Ff$--centric radical subgroups $Q_1,Q_2,\ldots,Q_k$ of $S$, with $P_{i-1},P_i \leq Q_i$, and a sequence 
of morphisms $\varphi_i \in \Aut_{\Ff}(Q_i)$ with $\varphi_i(P_{i-1})\leq P_i$ which factor $\varphi$. Since $\rho_{|Q_i} \circ \varphi_i$ 
and $\rho_{|Q_i}$ are isomorphic representations and $P_{i-1},P_i \leq Q_i$, the representations $\rho_{|P_i}$ and $\rho_{|P_{i-1}}$ 
are isomorphic to the respective restrictions of $\rho_{|Q_i}$. Therefore $\rho_{|P}$ is isomorphic to $\rho_{|P'} \circ \varphi$ in $\Rep_n(P)$.

Moreover, the functor $\Rep_n \colon \Ff^{cr} \to \Set$ factors through the orbit category $\Or(\Ff^{cr})$ 
since isomorphism classes of representations are fixed under inner automorphisms. Therefore the bijection just proved shows that 
\[ \higherlim{\Or(\Ff^{cr})}{} \Rep _n (P) \cong \Rep_n(\Ff). \qedhere \]
\end{proof}

The rest of this section is devoted to a construction which provides fusion-preserving representations out of representations of 
the Sylow subgroup. In the spirit of induction, the idea is to `induce' representations of the Sylow subgroup $S$ to fusion-preserving 
representations. 

The key tool which will allow us to do this construction is the specific $(S,S)$--biset $\Omega$ constructed in the proof of Proposition 5.5 of Broto--Levi--Oliver \cite{BLO1} for any saturated fusion system $\Ff$ over $S$. This biset $\Omega $ satisfies that 
$\Omega_{|(P,S)}$ and $\Omega_{|(\varphi,S)}$
are isomorphic as $(P,S)$--bisets for any  given $\varphi \in \Hom_{\Ff}(P,Q)$. Recall that $\Omega_{|(\varphi,S)}$ stands
for the $(P,S)$--biset whose underlying set is $\Omega$ with the same right action of $S$ and the left action of $P$ given by  
\[ p \cdot x = \varphi(p) x \]
where the action on the right hand side is the original right action of $S$. Hence there exists an isomorphism of 
$(P,S)$--bisets $\tau_{\varphi} \colon \Omega \to \Omega$ where the action on the source is via $\varphi$, that is, $\tau_{\varphi}(\varphi(p)x)= p \tau_{\varphi}(x)$.

In the remainder of the section we use $\C[X]$ to denote the complex vector space with the set $X$ as basis. If $S$ acts linearly on
the complex vector spaces $V$ and $W$ on the right and the left, respectively, we denote by $ V \otimes_S W$ their tensor
product as $\C S$--modules. Note that if $V$ has an action of $R$ on the left, $V \otimes_S W$ inherits a left $R$-action.

\begin{definition}
\label{definition-induction}
Let $\rho$ be a complex $n$--dimensional representation of the Sylow subgroup $S$ and consider the vector space $\C[\Omega] \otimes_S \C^n$, 
where $S$ acts on $\C^n$ via $\rho$. Define $\rho^{\Ff} \in \Rep_{n|\Omega/S|}(S)$ to be $\C[\Omega] \otimes_S \C^n$ with the left 
action of $S$ inherited from the left action of $S$ on $\Omega$. 
\end{definition}

\begin{proposition}
\label{proposition-induction} 
Let $\Ff$ be a saturated fusion system over $S$ and $\rho \in \Rep_n(S)$. Then $\rho^{\Ff} \in  \Rep_{n|\Omega/S|}(\Ff)$ and $\rho$ 
is a subrepresentation of $\rho^{\Ff}$.
\end{proposition}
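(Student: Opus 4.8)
The plan is to verify directly that the character of $\rho^{\Ff}$ is fusion-preserving in the sense of \fullref{remark-character-fusion-preserving}, and then to exhibit $\rho$ as a summand. First I would compute $\chi_{\rho^{\Ff}}$. Choosing a set of orbit representatives for the right $S$-action on $\Omega$, say $\Omega/S = \{ [x_1], \ldots, [x_r]\}$ with $r = |\Omega/S|$, one gets a basis of $\C[\Omega]\otimes_S \C^n$ of the form $x_j \otimes v$ where $v$ runs over a basis of $\C^n$; the left action of $g \in S$ permutes the $x_j$ up to the right $S$-action, so $g \cdot (x_j \otimes v) = (g x_j) \otimes v = (x_{\sigma(j)} s_j) \otimes v = x_{\sigma(j)} \otimes \rho(s_j) v$ for suitable $\sigma = \sigma_g$ and $s_j = s_{g,j} \in S$. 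Taking the trace picks out the indices $j$ with $\sigma_g(j) = j$, i.e.\ $g x_j = x_j s_j$, and contributes $\chi_{\rho}(s_j)$; this yields a standard "Frobenius-type" formula
\[ \chi_{\rho^{\Ff}}(g) = \sum_{\substack{[x]\in\Omega/S \\ gx \in xS}} \chi_\rho\bigl(x^{-1}gx\bigr), \]
where $x^{-1}gx \in S$ is the well-defined (independent of the choice of $x$ in its orbit, using that $\rho$ is a class function) element determined by $gx = x(x^{-1}gx)$.

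Next I would use the defining property of $\Omega$: for $\varphi \in \Hom_{\Ff}(P,Q)$ there is an isomorphism $\tau_\varphi \colon \Omega \to \Omega$ of $(P,S)$-bisets with $\tau_\varphi(\varphi(p)x) = p\,\tau_\varphi(x)$. Given $g \in P$ and $g' = \varphi(g)$, I want $\chi_{\rho^{\Ff}}(g) = \chi_{\rho^{\Ff}}(g')$. The map $\tau_\varphi$ is a bijection $\Omega \to \Omega$ commuting with the right $S$-action, hence induces a bijection on $\Omega/S$, and it intertwines the left multiplication by $g'$ on the source with left multiplication by $g$ on the target in the sense that $\tau_\varphi(g' x) = \tau_\varphi(\varphi(g)x) = g\,\tau_\varphi(x)$. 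Therefore $g' x \in xS$ if and only if $g\,\tau_\varphi(x) \in \tau_\varphi(x)S$, and when this holds the associated Sylow elements agree: $(\tau_\varphi(x))^{-1} g\, \tau_\varphi(x) = x^{-1} g' x$, since $g \tau_\varphi(x) = \tau_\varphi(g'x) = \tau_\varphi(x (x^{-1}g'x)) = \tau_\varphi(x)(x^{-1}g'x)$. Matching the terms of the sum for $\chi_{\rho^{\Ff}}(g')$ over $[x]$ with those for $\chi_{\rho^{\Ff}}(g)$ over $[\tau_\varphi(x)]$ gives the desired equality, and by \fullref{remark-character-fusion-preserving} this proves $\rho^{\Ff}\in\Rep_{n|\Omega/S|}(\Ff)$.

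Finally, to see $\rho$ is a subrepresentation of $\rho^{\Ff}$, I would use that $\Omega$, as a left $S$-set, contains a free orbit: by the properties established in \cite[Proposition 5.5]{BLO1} the biset $\Omega$ has the form $\coprod S\times_{P_i} Q_i$ and $|\Omega/S|$ is prime to $p$, so at least one of the $P_i$ equals $S$, giving a sub-$(S,S)$-biset isomorphic to $S$ itself (with left and right multiplication). Restricting the tensor construction to this free summand $S \subseteq \Omega$ yields $\C[S]\otimes_S \C^n \cong \C^n$ with its original left $S$-action via $\rho$, and since $\otimes_S$ is additive over the decomposition of $\Omega$ into $S$-biset orbits, this exhibits $\rho$ as a direct summand of $\C[\Omega]\otimes_S\C^n = \rho^{\Ff}$. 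I expect the main obstacle to be the bookkeeping in the character computation — making the Frobenius-type formula precise (in particular, checking independence of the orbit representative, which is where $\chi_\rho$ being a class function on $S$ enters) and then tracking how $\tau_\varphi$ matches up the orbit sums; the summand claim is comparatively soft once one invokes the explicit structure of $\Omega$.
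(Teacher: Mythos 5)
For the fusion-preservation statement you take a different, and valid, route from the paper. You compute the character $\chi_{\rho^{\Ff}}$ via a Frobenius-type induction formula over $\Omega/S$ and match terms under the bijection induced by $\tau_\varphi$; the well-definedness check (class-function property of $\chi_\rho$) and the identity $\tau_\varphi(x)^{-1}g\,\tau_\varphi(x)=x^{-1}g'x$ are both correct. The paper instead constructs the explicit $P$-equivariant isomorphism $\overline\tau_\varphi\colon x\otimes v\mapsto\tau_\varphi(x)\otimes v$ from $\rho^{\Ff}_{|\varphi(P)}\circ\varphi$ to $\rho^{\Ff}_{|P}$, which realizes the isomorphism of representations rather than merely equating characters. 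Yours is essentially the character-level shadow of theirs; both are sound.

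For the subrepresentation claim there is a genuine gap. You correctly deduce from $|\Omega/S|=\sum_i[S:P_i]$ being prime to $p$ that some $P_i=S$, but a summand $S\times_{(S,\varphi_i)}S$ with $\varphi_i\in\Aut_{\Ff}(S)$ is $S$ with the left action twisted by $\varphi_i$, \emph{not} ``$S$ itself with left and right multiplication'' --- these are isomorphic $(S,S)$-bisets only when $\varphi_i$ is inner. Restricting the tensor construction to that summand produces $\rho\circ\varphi_i$, not $\rho$, and $\rho\circ\varphi_i\not\cong\rho$ in general since $\rho$ is an arbitrary representation of $S$. What you actually need, and what the paper uses when it takes $x_1=[e,e]\in S\times_{(S,\id)}S$, is that the untwisted identity summand genuinely occurs in $\Omega$. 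This does hold, but it needs a finer count than yours: by $\Ff$-invariance of $\Omega$ the free summands $S\times_{(S,\psi)}S$ occur with a multiplicity $m$ constant over $\Out_{\Ff}(S)$, and $m\,|\Out_{\Ff}(S)|\equiv 1\pmod p$ together with $|\Out_{\Ff}(S)|$ prime to $p$ (since $\Inn(S)$ is Sylow in $\Aut_{\Ff}(S)$ by saturation) forces $m\geq 1$. Alternatively, your argument as written shows $\rho\circ\varphi_i$ is a subrepresentation of $\rho^{\Ff}$ for some $\varphi_i\in\Aut_{\Ff}(S)$, and you can then invoke the fusion-preservation you already proved to get $\rho\hookrightarrow\rho^{\Ff}\circ\varphi_i^{-1}\cong\rho^{\Ff}$; either patch closes the gap.
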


\begin{proof} 
We need to show that for any $P \leq S$ and any $\varphi \in \Hom_{\Ff}(P,S)$, the representations $\rho^{\Ff}_{|P}$ and 
$\rho^{\Ff}_{|\varphi(P)} \circ \varphi$ of $P$ are isomorphic. The map $\tau_{\varphi} \colon \Omega \to \Omega$ considered
above induces a linear map $\C[\Omega] \to \C[\Omega]$, which we also denote by $\tau_{\varphi}$. Consider the map
\begin{gather*}
\overline{\tau}_{\varphi} \colon \C[\Omega]\otimes_S \C^n \longrightarrow \C[\Omega]\otimes_S \C^n \\
\qquad \enspace \quad x \otimes v \longmapsto \tau_{\varphi}(x) \otimes v.
\end{gather*}
The map $\overline{\tau}_{\varphi}$ is well defined and linear. It is bijective because $\tau_{\varphi}$ is bijective. Given
$x \in \Omega$, the equalities
\[ \overline{\tau}_{\varphi} (\varphi(p)(x \otimes v))= \overline{\tau}_{\varphi}(\varphi(p)x \otimes v) =\tau_{\varphi}( \varphi(p)x ) \otimes v= p\tau_{\varphi}(x) \otimes v = p \overline{\tau}_{\varphi}(x \otimes v). \]
show that $ \overline{\tau}_{\varphi}$ is an isomorphism from $\rho^{\Ff}_{|\varphi(P)} \circ \varphi$ to $\rho^{\Ff}_{|P}$.
Hence $\rho^{\Ff}$ is fusion-preserving.

Now we prove that $\rho$ is a subrepresentation of $\rho^{\Ff}$. Let $\{x_1, \ldots , x_r\} \subset \Omega$ be a set of orbit
representatives for $\Omega / S$. We can assume that $x_1$ is the element $[e,e] \in S \times _{(S,id)} S \subset \Omega$, where $e$ is the unit of $S$. It is clear that this element satisfies $s \cdot x_1 = x_1 \cdot s$ for all $s \in S$. 

Given a basis $\{v_1, \ldots , v_n\}$ of $\C^n$, let $V$ be the subspace of $\C[\Omega] \otimes_S \C^n$
generated by the elements $ x_1 \otimes v_i$ for $i=1, \ldots , n$. Note that $V$ is $S$--invariant since 
\[ s(x_1 \otimes v)=(s \cdot x_1) \otimes v= (x_1 \cdot s) \otimes v = x_1 \otimes \rho(s)(v). \]
Moreover, this shows that it is $S$--isomorphic to $\C^n$ with the action of $S$ via $\rho$. Therefore $\rho$ is a subrepresentation of $\rho^{\Ff}$.
\end{proof}

\begin{remark}
In particular, given a saturated fusion system $\Ff$ over $S$, any representation $\rho \in \Rep_n(S)$ 
is a subrepresentation of a fusion-preserving representation.
\end{remark}

\section{Generalized cohomology theories of classifying spaces of $p$--local finite groups}
\label{section:generalized}

This section contains the proof of \fullref{main-theorem-3} which relies strongly in the work of 
Ragnarsson \cite{R1} on the stable homotopy theory of fusion systems. We use $\{ E , F \}$ to denote 
the set of homotopy classes of maps between the spectra $E$ and $F$. 

Given a saturated fusion system $\Ff$ over $S$, Ragnarsson \cite{R1} constructs an idempotent in the 
ring of stable self-maps $\{ \Sigma^{\infty} BS,\Sigma^{\infty} BS \}$ associated to the saturated 
fusion system $\Ff$. More precisely, it is shown that there exists an idempotent $\omega$ in
$\{ \Sigma^{\infty} BS,\Sigma^{\infty} BS \}$ which is a $\Z \pcom$--linear combination of stable maps of the form $\Sigma^{\infty} B\varphi \circ \tr_P$, where $\tr_P \colon \Sigma^{\infty} BS \to \Sigma^{\infty} BP$ is the
stable transfer map and $\varphi \in \Hom_{\Ff}(P,S)$. This idempotent satisfies 
\[ \omega \circ \Sigma^{\infty} Bf \simeq \omega _{| \Sigma^{\infty} BP} \]
for any $P \leq S$ and any $f \in \Hom_{\Ff}(P,S)$. Here $\omega _{| \Sigma^{\infty} BP}$
denotes the composition of $w$ with the map $ j_P^S \colon \Sigma^{\infty} BP \to \Sigma^{\infty} BS$
induced by the inclusion of $P$ in $S$.

The homotopy type of the stable summand $\B \Ff$ of $\Sigma^{\infty} BS$ induced by 
$\omega$ coincides with the homotopy type of the classifying spectrum constructed by
Broto, Levi and Oliver in
\cite{BLO1} just after Proposition 5.5. Note that $\B \Ff$ is $p$--complete since it 
is a retract of $\Sigma^{\infty} BS$.

The spectrum $\B \Ff$ comes equipped with the structure map of the mapping telescope
$\sigma_{\Ff} \colon \Sigma^{\infty} BS \to \B \Ff$ and a transfer map $ t_{\Ff} \colon \B \Ff
\to \Sigma^{\infty} BS$ such that $ t_{\Ff} \circ \sigma_{\Ff} \simeq \omega$ and 
\mbox{$ \sigma_{\Ff} \circ t_{\Ff} \simeq \id$} (see Section 7 of \cite{R1}).

\begin{remark}
Let $\Ll$ be a centric linking system for $\Ff$ and $\Theta \colon BS \to | \Ll | \pcom $ the canonical inclusion 
induced by the structure morphism $\delta_S \colon S \to \Aut_{\Ll}(S)$ from \fullref{Linking}. 
Proposition 10.1 in \cite{R1} shows that there is a homotopy equivalence $ h \colon \B \Ff \to \Sigma^{\infty} |\Ll| \pcom $ 
such that $ h \circ \sigma_{\Ff} \simeq \Sigma^{\infty} \Theta$. 
\end{remark}

\begin{theorem} 
\label{main-theorem-3}
Let $h^*$ be a generalized cohomology theory.  Given a $p$--local finite group $(S,\Ff,\Ll)$, there is an isomorphism
\[ h^*(|\Ll |\pcom) \cong \higherlim{\Or(\Ff^c)}{} h^*(BP) . \]
\end{theorem}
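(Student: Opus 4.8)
The plan is to derive the stable elements formula as a formal consequence of the properties of Ragnarsson's characteristic idempotent $\omega$ together with the standard Bousfield--Kan mod $p$ homology decomposition of $|\Ll|\pcom$. Recall from the excerpt that $\B\Ff$ is a stable summand of $\Sigma^\infty BS$ split off by $\omega$, with structure map $\sigma_\Ff$ and transfer $t_\Ff$ satisfying $t_\Ff\circ\sigma_\Ff\simeq\omega$ and $\sigma_\Ff\circ t_\Ff\simeq\id$, and that $h\colon\B\Ff\to\Sigma^\infty|\Ll|\pcom$ is a homotopy equivalence with $h\circ\sigma_\Ff\simeq\Sigma^\infty\Theta$. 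Since $h^*$ is a generalized cohomology theory, $h^*(|\Ll|\pcom)\cong h^*(\B\Ff)$, and the idempotent $\omega$ induces an idempotent $\omega^*$ on $h^*(BS)$ whose image is precisely $h^*(\B\Ff)\cong h^*(|\Ll|\pcom)$; moreover $\Theta^*\colon h^*(|\Ll|\pcom)\to h^*(BS)$ is split injective onto $\im(\omega^*)$.

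First I would identify $\im(\omega^*)$ with the subgroup of \emph{$\Ff$--stable elements} in $h^*(BS)$, i.e.\ those $x\in h^*(BS)$ such that $(j_P^S)^*(x)=(\Sigma^\infty B\varphi)^*(x)$ in $h^*(BP)$ for every $P\leq S$ and every $\varphi\in\Hom_\Ff(P,S)$. The inclusion ``$\im(\omega^*)\subseteq$ stable elements'' follows from the relation $\omega\circ\Sigma^\infty Bf\simeq\omega_{|\Sigma^\infty BP}$ quoted in the excerpt: precomposing with $\mathrm{tr}_P$ and using that $\omega$ is a $\Z\pcom$--combination of maps $\Sigma^\infty B\varphi\circ\mathrm{tr}_P$ shows any $x=\omega^*(y)$ is stable. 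For the reverse inclusion, if $x$ is stable then applying $x$ to each summand $\Sigma^\infty B\varphi\circ\mathrm{tr}_P$ of $\omega$ and using the double-coset/stability relation shows $\omega^*(x)=x$; here one also uses that $\omega$ restricts to the identity appropriately on the summand, which is part of Ragnarsson's construction. Thus $h^*(|\Ll|\pcom)\xrightarrow{\Theta^*}h^*(BS)$ has image exactly the $\Ff$--stable elements.

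Next I would match the group of $\Ff$--stable elements in $h^*(BS)$ with $\higherlim{\Or(\Ff^c)}{}h^*(BP)$. There is an obvious map $\higherlim{\Or(\Ff^c)}{}h^*(BP)\to h^*(BS)$ given by evaluating a compatible family at $S$ (note $S$ is $\Ff$--centric), landing in the stable elements by compatibility. Injectivity is as in \fullref{respectfusion}: any compatible family is determined by its value at $S$ since every $P\leq S$ with $P$ $\Ff$--centric receives the inclusion $P\hookrightarrow S$, a morphism in $\Or(\Ff^c)$. Surjectivity onto stable elements requires producing, from a stable $x\in h^*(BS)$, a coherent assignment $P\mapsto x_P\in h^*(BP)$ for \emph{all} $\Ff$--centric $P$ and \emph{all} morphisms in $\Or(\Ff^c)$, not just those of the form $\Hom_\Ff(P,S)$; this is handled by setting $x_P=(j_P^S)^*(x)$ and checking that $x_P=(B\varphi)^*(x_Q)$ for arbitrary $\varphi\in\Hom_\Ff(P,Q)$, which reduces to the stability hypothesis by composing with the inclusion $Q\hookrightarrow S$ and using functoriality. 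One caveat: I would need $h^*(BP)$ to be unchanged under $p$--completion of $BP$ where relevant, but since $P$ is a finite $p$--group $BP$ is already $\F_p$--good and the decomposition is over the $\Ff$--centric orbit category, so no $p$--completion issue arises at the level of the building blocks.

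The main obstacle I anticipate is the surjectivity step showing every $\Ff$--stable element of $h^*(BS)$ lies in $\im(\omega^*)$ — equivalently that $\omega$ acts as the identity on stable classes. This is where Ragnarsson's precise description of $\omega$ (as a specific $\Z\pcom$--linear combination of $\Sigma^\infty B\varphi\circ\mathrm{tr}_P$ normalized so that its ``degree'' is $1$) enters in an essential way, via the double coset formula for $(\mathrm{tr}_P)^*\circ(B\varphi)^*$ applied to a stable $x$; alternatively one can quote that $\omega$ is characterized up to homotopy by being $\Ff$--stable and idempotent with $\sigma_\Ff$ a telescope structure map, and invoke the universal property that a map $f\colon\Sigma^\infty BS\to Z$ is $\Ff$--stable iff $f\circ\omega\simeq f$ (stated in the introduction of the excerpt) applied to $Z$ an appropriate representing spectrum for $h^*$. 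In fact, the cleanest route is to apply that universal property directly: an element of $h^*(BS)=[\Sigma^\infty BS, \Sigma^h E]$ (for $E$ representing $h^*$) is $\Ff$--stable as a \emph{map of spectra} precisely when it is fixed by $\omega$, and one checks that ``$\Ff$--stable as a map into $E$'' coincides with ``$\Ff$--stable element'' in the elementwise sense above; then $h^*(|\Ll|\pcom)=[\Sigma^\infty|\Ll|\pcom,\Sigma^h E]\cong[\B\Ff,\Sigma^h E]\cong\{\text{$\omega$-fixed maps}\}=\{\text{$\Ff$-stable elements}\}\cong\higherlim{\Or(\Ff^c)}{}h^*(BP)$, completing the proof.
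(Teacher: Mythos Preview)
Your proposal is correct and takes essentially the same approach as the paper: identify $h^*(|\Ll|\pcom)\cong\im(\omega^*)$ via the splitting $\sigma_{\Ff},t_{\Ff}$, then use Ragnarsson's characterization (the universal property you invoke at the end, which is Corollary~6.4 in \cite{R1}) to identify $\im(\omega^*)$ with the $\Ff$--stable elements of $h^*(BS)$, which in turn form the inverse limit over $\Or(\Ff^c)$. Two small clean-ups: the inclusion $\im(\omega^*)\subseteq\{\text{stable}\}$ follows directly from $\omega\circ\Sigma^\infty Bf\simeq\omega\circ j_P^S$ without any appeal to transfers or the decomposition of $\omega$, and the Bousfield--Kan homology decomposition you mention in your opening line is never actually used in the argument.
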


\begin{proof}
Let $Y$ be a spectrum representing the corresponding reduced cohomology theory $\widetilde{h}^*$, so that we have
\[ h^n( \B \Ff )= \{ \B \Ff , Y \}_n = \{ \B \Ff, \Sigma^{-n} Y \} . \]
We will actually show that $ \{ \B \Ff , Z \} \cong \higherlim{\Or(\Ff^c)}{} \{ BP,Z \}$ for any spectrum $Z$. 

Let us denote by $f^*$ the image of a map $f$ of spectra under the functor $\{ -,Z \}$. We will show 
that $\im(\omega^*)$ is isomorphic to both $\higherlim{\Or(\Ff^c)}{}  \{\Sigma^{\infty} BP, Z\}$ and $ \{ \B \Ff , Z \} $.

Since $t_{\Ff}^* \sigma_{\Ff}^*$ is the identity, $\sigma_{\Ff}^*$ is injective and so its image is 
isomorphic to $\{ \B \Ff, Z \}$. On the other hand, $t_{\Ff}^*$ is surjective and therefore the image 
of $\omega^*$ equals the image of $\sigma_{\Ff}^*$. In particular, the image of $\omega^*$ is isomorphic 
to $\{ \B \Ff, Z \}$.

Now consider the map 
\begin{gather*}
\varphi \colon \im(\omega^*) \to \higherlim{\Or(\Ff^c)}{}  \{\Sigma^{\infty} BP, Z\} \\
\quad a \mapsto \left( a \circ j_P^S \right)_P 
\end{gather*}
It is well defined, because given $ f \colon P \to S$ in $\Or(\Ff^c)$ and $ a = \omega^*b \in \im(\omega^*)$ we 
have
\[ (\omega^* b) \circ \Sigma^{\infty} f = b \circ \omega \circ \Sigma^{\infty} f \simeq 
b \circ \omega \circ j_P^S = (\omega^* b) \circ j_P^S . \]
On the other hand, given $(b_P)_P$ in the inverse limit, the map $b_S$ is right $\Ff$--stable 
in the terminology of Definition 6.1 in \cite{R1}. Hence $b_S \circ \omega \simeq b_S$ by Corollary 6.4 in \cite{R1}
and therefore $\omega^*(b_S)=b_S$. So the projection 
\[\higherlim{\Or(\Ff^c)}{}  \{\Sigma^{\infty} BP, Z\} \to \{\Sigma^{\infty} BS, Z\} \]
factors through a map $\higherlim{\Or(\Ff^c)}{}  \{\Sigma^{\infty} BP, Z\} \to \im(\omega^*)$, which is the inverse of $\varphi$.
\end{proof}

\begin{remark}
Note that projection to the $\{ \Sigma^{\infty} BS, Z \}$--factor defines an isomorphism
\[ \higherlim{\Or(\Ff^c)}{}  \{\Sigma^{\infty} BP, Z\} \to \{ \Sigma^{\infty} BS, Z\}^{\Ff} \]
where $\{ \Sigma^{\infty} BS, Z\}^{\Ff}$ is the subgroup of stable elements, that is, maps $f$
such that \mbox{$ f \circ \Sigma^{\infty} \varphi \simeq f \circ j_P^S $} for any $\varphi \colon P \to S$
in $\Ff$. Hence this theorem also proved that there is an isomorphism
\[ \{ \Sigma^{\infty} |\Ll| \pcom, Z \} \cong \{ \Sigma^{\infty} BS, Z\}^{\Ff} \]
for any spectrum $Z$. In particular
\[ h^*(|\Ll |\pcom) \cong h^*(BS)^{\Ff} \]
for any generalized cohomology theory $h^*$.
\end{remark}

The following corollary will be particularly important in the next section. Recall 
that $p$--adic (periodic) topological $K$--theory $K^*(-;\Z \pcom)$ is the generalized 
cohomology theory associated to the spectrum determined by $ \Z \pcom \times BU \pcom$ 
(see Mitchell \cite{M} for instance).

\begin{corollary}
\label{corollary:Ktheory}
Let $(S,\Ff,\Ll)$ be a $p$--local finite group. Then 
\[ K^*(|\Ll| \pcom;\Z \pcom) \cong \higherlim{\Or(\Ff^c)}{} K^*(BP;\Z \pcom). \]
In particular, $K^*(|\Ll| \pcom;\Z \pcom)$ is torsion-free and concentrated in even degrees.
\end{corollary}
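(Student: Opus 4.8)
The plan is to deduce the statement directly from \fullref{main-theorem-3}. First I would apply that theorem to the generalized cohomology theory $h^* = K^*(-;\Z\pcom)$, which by definition is the theory associated to the spectrum determined by $\Z\pcom \times BU\pcom$. This at once produces the isomorphism
\[ K^*(|\Ll|\pcom;\Z\pcom) \cong \higherlim{\Or(\Ff^c)}{} K^*(BP;\Z\pcom), \]
and there is nothing further to do for the first assertion.

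For the final sentence I would first invoke the classical computation of $p$--adic topological $K$--theory of a finite $p$--group. By the Atiyah--Segal completion theorem, $K^1(BP;\Z\pcom)=0$ and $K^0(BP;\Z\pcom)\cong R(P)^\wedge_I\otimes_{\Z}\Z\pcom$, where $R(P)^\wedge_I$ is the completion of the representation ring of $P$ at its augmentation ideal; and when $P$ is a finite $p$--group this group is torsion-free (Atiyah). Consequently, for every object $P$ of $\Or(\Ff^c)$ the graded abelian group $K^*(BP;\Z\pcom)$ is torsion-free and concentrated in even degrees.

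Finally I would check that these two properties pass to the inverse limit. The group $\higherlim{\Or(\Ff^c)}{} K^*(BP;\Z\pcom)$ is the subgroup of compatible families inside the product $\prod_{P}K^*(BP;\Z\pcom)$; since a product of torsion-free abelian groups is torsion-free, and so is any subgroup of it, the limit is torsion-free, and since the degree $2k+1$ part of the limit is the inverse limit of the vanishing degree $2k+1$ parts of the terms, it vanishes as well. The argument presents no real obstacle: the only external ingredient is the classical structure of $K^*(BP;\Z\pcom)$ for a finite $p$--group, and the rest is a formal consequence of \fullref{main-theorem-3}.
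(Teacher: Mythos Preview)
Your proposal is correct and follows exactly the approach implicit in the paper: the corollary is stated there without proof, immediately after \fullref{main-theorem-3}, with the understanding that the isomorphism is an instance of that theorem and the ``in particular'' comes from the classical structure of $K^*(BP;\Z\pcom)$ for finite $p$--groups together with the fact that torsion-freeness and vanishing in odd degrees pass to inverse limits.
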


\section{Vector bundles over classifying spaces of $p$--local finite groups}

In this section we describe the Grothendieck group of complex vector bundles over the
classifying space of a $p$--local finite group in terms of the fusion-preserving characters 
of the Sylow subgroup $S$. Our main goal is to obtain a description for $p$--local finite groups
analogous to the one in Jackowski--Oliver \cite{JO}. We achieve this in \fullref{theorem:Grothendieck}
which will follow from \fullref{main-theorem-2}.
 
When dealing with a $p$--group $P$, Dwyer and Zabrodsky prove that there is  an isomorphism 
$\Vect(BP) \cong \Rep(P)$, where $\Vect(BP)$ and $\Rep(P)$ are the monoids 
of isomorphism classes of complex vector bundles over $BP$ and of complex finite 
dimensional representations of $P$, respectively. Therefore we obtain an isomorphism
of their Grothendieck groups $\K(BP) \cong R(P)$.

\begin{theorem}[\cite{DZ}]
\label{dwyer-zabrodsky}
There are natural bijections
\[ \Rep(P,U(m)) \to [BP,BU(m)] \to [BP,BU(m) \pcom] \]
given by sending a representation $\rho$ to $B\rho$ and composing with
the $p$--completion map \mbox{$BU(m) \to BU(m) \pcom$}. Moreover, 
the natural map 
\[ BC_{U(m)}(\rho(P))\to \Map(BP,BU(m))_{B\rho} \]
induces a homotopy equivalence 
\[ BC_{U(m)}(\rho(P))\pcom \to \Map(BP,BU(m)\pcom)_{B\rho}. \]
\end{theorem}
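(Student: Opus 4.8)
\emph{Plan of proof.} The plan is to reduce the whole statement to the case of a $p$--complete target and then prove that version by induction on $|P|$. First I would dispose of the second bijection and reduce the remaining two assertions to their $p$--complete analogues. Since $\widetilde{H}^\ast(BP;\Z)$ is finite of $p$--power order in each positive degree, $BP$ is rationally acyclic and $\ell$--adically acyclic for every prime $\ell\neq p$. Applying $\Map(BP,-)$ to the arithmetic fracture square for the simply connected, finite type space $BU(n)$, and using that $\Map(BP,-)$ carries a $\Q$--local or $\ell$--complete ($\ell\neq p$) space essentially to its space of constant maps — all obstruction groups $\widetilde{H}^{>0}(BP;\pi_\ast)$ vanish, the coefficients being uniquely $p$--divisible — one finds that $p$--completing the target affects neither $\pi_0\Map(BP,BU(n))$ nor the $p$--completion of any of its components. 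Hence it suffices to show that $\Rep(P,U(n))\to[BP,BU(n)\pcom]$ is a bijection and that $BC_{U(n)}(\rho(P))\pcom\to\Map(BP,BU(n)\pcom)_{B\rho}$ is a homotopy equivalence for every $\rho$.

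For $P=V$ elementary abelian I would invoke Lannes' theory. Starting from $H^\ast(BU(n);\F_p)=H^\ast(BT^n;\F_p)^{\Sigma_n}$, a direct computation gives $T_VH^\ast(BU(n);\F_p)\cong\prod_{[\rho]\in\Rep(V,U(n))}H^\ast(BC_{U(n)}(\rho(V));\F_p)$, the product running over the finite set of $\Sigma_n$--orbits in $\Hom(V,T^n)$ and the centralizers being products of smaller unitary groups; Lannes' mapping space theorem then identifies $\Map(BV,BU(n)\pcom)$ with $\coprod_{[\rho]}BC_{U(n)}(\rho(V))\pcom$, matching components with fusion. The finiteness needed for this is met because $BU(n)=\mathrm{colim}_m\,\mathrm{Gr}_n(\C^m)$ is a filtered colimit of finite complexes, for which $\Map_\ast(BV,-)\simeq\ast$ by Miller's theorem; this is the step where the hard analytic input, the Sullivan conjecture, enters. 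This settles every elementary abelian $P$, in particular $P=1$ and $P=\Z/p$.

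For the inductive step, assume the theorem for all $p$--groups of order $<|P|$, with $P$ not elementary abelian, and choose a central subgroup $C\cong\Z/p$ with $Q:=P/C$, so $|Q|<|P|$. The central extension $1\to C\to P\to Q\to1$ presents $BP$ as the total space over $BQ$ of the fibration with fibre $BC$ determined by the extension class in $H^2(Q;C)$, equivalently as the homotopy orbit space $(BC)_{hQ}$ of the resulting $Q$--action on $BC$ (which has trivial monodromy, $C$ being central); therefore $\Map(BP,BU(n)\pcom)\simeq\Map(BC,BU(n)\pcom)^{hQ}$. Feeding in the elementary abelian case and using that $Q$ preserves each summand, this becomes $\coprod_{[\sigma]\in\Rep(C,U(n))}\big(BC_{U(n)}(\sigma(C))\pcom\big)^{hQ}$, where the $Q$--action on $BC_{U(n)}(\sigma(C))$ is again of central type, classified by the image of the extension class under $\sigma(C)\hookrightarrow Z(C_{U(n)}(\sigma(C)))$. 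It then remains to identify each of these homotopy fixed point spaces, component by component, with $\coprod_\rho BC_{U(n)}(\rho(P))\pcom$ over the $\rho\in\Rep(P,U(n))$ restricting to $\sigma$, and to reassemble over $\sigma$; naturality in $P$ and in $n$ is then formal, everything being built from $\rho\mapsto B\rho$.

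I expect this last identification to be the main obstacle. To close the induction one cannot stay inside $BU(n)$: one must prove the statement for $BG$ with $G$ any compact Lie group whose identity component is a product of unitary groups — equivalently, any iterated centralizer of a finite abelian subgroup of some $U(N)$, a class closed under the centralizer operation used above — and, because of the nontrivial extension class, in a form that allows "twisted" mapping spaces, i.e. spaces of sections of the associated $BG$--bundle over $BQ$. Keeping careful track of this twist, so that $\Rep(P,U(n))$ rather than an uncorrelated pair of representations of $C$ and of $Q$ emerges, together with verifying at each stage the finiteness hypotheses in Lannes' and Miller's theorems, is where the real work lies.
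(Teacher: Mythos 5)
The paper does not prove this statement: \autoref{dwyer-zabrodsky} is quoted verbatim from Dwyer--Zabrodsky \cite{DZ} and carries no proof in the text, so there is no in-paper argument for your proposal to be compared against; you are effectively attempting to reprove the cited result.

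As a reproof, your plan is sound in outline but stops short exactly where the theorem's mathematical content lies, and you say so yourself. The reduction from $BU(n)$ to $BU(n)\pcom$ via the fracture square, and the elementary abelian case via Lannes' $T$--functor computation for $H^*(BU(n);\F_p)$ plus the mapping space theorem (with Miller's theorem furnishing the required nullity of $\Map_*(BV,-)$), are both standard and correct. The inductive step is also set up correctly: a central $C\cong\Z/p$ gives $BP\simeq (BC)_{hQ}$, the $Q$--action fixes each component of $\Map(BC,BU(n)\pcom)$ because $C$ is central, and the $Q$--action on a component $BC_{U(n)}(\sigma(C))\pcom$ is twisted by the extension class. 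But at that point your stated inductive hypothesis --- the theorem for $p$--groups of order $<|P|$ mapping into $BU(n)\pcom$ --- is not strong enough to evaluate $\bigl(BC_{U(n)}(\sigma(C))\pcom\bigr)^{hQ}$, since the target is not $BU(n)$ and the fixed points are a space of sections of a nontrivial bundle over $BQ$. You flag this clearly ("one cannot stay inside $BU(n)$"; "this is where the real work lies") but you do not carry it out. Closing this requires strengthening the statement to one about $BG$ for $G$ in a class of compact Lie groups closed under taking centralizers of abelian $p$--subgroups (as Dwyer--Zabrodsky, and later Notbohm, do for all compact Lie $G$), and formulating and proving the twisted, sectioned version of the mapping space identification so that the induction actually closes. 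Until that is supplied, the proposal is a program rather than a proof; the honest move, as in the paper, is to cite \cite{DZ} (or Notbohm) for precisely the step you leave open.
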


Recall that every $p$--local finite group $(S,\Ff,\Ll)$ comes equipped with a morphism \mbox{$\delta_S \colon S \to \Aut_{\Ll}(S)$} inducing 
a map $\Theta \colon BS \to |\Ll| \pcom$. Restriction defines a map \mbox{$ \Theta^* \colon [|\Ll| \pcom,BU(m)\pcom] \to [BS,BU(m)\pcom]$} 
which factors through the inverse limit  
\[ \psi_m \colon [|\Ll | \pcom, BU(m) \pcom] \longrightarrow \higherlim{\Or(\Ff^c)}{} [ BP, BU(m) \pcom ]. \] 

\fullref{dwyer-zabrodsky} allows us to give an algebraic description of the inverse limit
above in terms of fusion-preserving $m$-dimensional representations of $S$, since
\[ \higherlim{\Or(\Ff^c)}{} [BP,BU(m)] \cong \higherlim{\Or(\Ff^c)}{} \Rep_m(P) \cong \Rep_m(\Ff). \] 
Hence we may consider $\psi_m$ as a map 
\[ \psi_m \colon [|\Ll | \pcom, BU(m) \pcom] \longrightarrow \Rep_m(\Ff). \] 

We need to see how far $\psi_m$ stands from being injective and surjective. 
A general framework of obstruction theory to address this question 
has been developed by Wojtkowiak \cite{W}. There is a filtration $F_n|\Ll|\pcom$
of $|\Ll | \pcom$ induced by the skeletal filtration of the nerve of $\Or(\Ff^c)$
in such a way that an element in
\[ \higherlim{\Or(\Ff^c)}{} [BP,BU(m) \pcom ] \]
defines a map $F_1 | \Ll |\pcom \to BU(m) \pcom$ and the obstruction theory studies
how to extend it inductively to $F_n|\Ll| \pcom$. Note that the map $\psi_m$ constructed
above corresponds to restriction along the inclusion $F_1 | \Ll | \pcom \to  |\Ll| \pcom$.

In our case, given $ \rho $ in $ \Rep_m(\Ff) $, the obstructions for $\rho$ to be in the image or to have a unique preimage lie in higher limits of the functors
\begin{gather*}
F_i^{\rho} \colon  \Or (\Ff^c)^{\op} \to \Z _{(p)}\text{--Mod} \\
\qquad \qquad \qquad \qquad \qquad \qquad P  \mapsto \pi_i \left( \Map(\widetilde{B}P,BU(m) \pcom)_{\widetilde{B} \rho_{|P}} \right)
\end{gather*}
where we denote by $\widetilde{B} \rho_{|P}$ the composition of the map $ \widetilde{B}P \to BU(m)$ induced by $\rho$ and
the $p$--completion map $ BU(m) \to BU(m) \pcom$. More precisely, the obstruction to extend a map 
$ f \colon F_{n-1} | \Ll | \pcom \to BU(m) \pcom$ to $F_n | \Ll | \pcom$ without changing $f$ on $F_{n-2} | \Ll | \pcom $ is a class
\[ [E_n] \in \! \! \! \! \! \! \higherlim{\Or(\Ff^c)}{n+1} \pi_n \left( \Map(\widetilde{B}P,BU(m) \pcom)_{f_P} \right), \]
where $f_P$ is the restriction of $f$ to $\widetilde{B}P$. Similarly, the obstruction to extending a homotopy between two maps $f$ and $g$
which is already defined on $F_{n-1} | \Ll | \pcom $ without changing the homotopy on $F_{n-2} | \Ll | \pcom$ is a class
\[ [U_n] \in \higherlim{\Or(\Ff^c)}{n} \pi_n \left( \Map(\widetilde{B}P,BU(m) \pcom)_{f_P} \right). \]
More details can be found in Section 4 of Cantarero--Castellana \cite{CC}.

\begin{remark}
\label{DirectSum}
Given maps $ f \colon X \to BU(m) \pcom$ and $ g \colon X \to BU(n) \pcom$, 
we can define their Whitney sum $ f \oplus g$ to be the composition
\[ X \stackrel{(f,g)}{\longrightarrow} BU(m) \pcom \times BU(n) \pcom \stackrel{s}{\longrightarrow} BU(m+n) \pcom \]
where $ s \colon BU(m) \pcom \times BU(n) \pcom \to BU(m+n) \pcom$ is the map induced by 
the group homomorphism $U(m) \times U(n) \to U(m+n)$ that sends $(A,B)$ to the matrix 
with $A$ and $B$ as diagonal blocks.
\end{remark}

\begin{theorem}
\label{main-theorem-2}
Let $(S,\Ff,\Ll)$ be $p$--local finite group $(S,\Ff,\Ll)$ and let $\reg$ be the regular representation of $S$. The map 
\[ \psi_m \colon [|\Ll | \pcom, BU(m) \pcom] \longrightarrow \Rep_m (\Ff) \] 
has the following properties:
\begin{enumerate}
\item Given $\rho \in \Rep_m(\Ff)$, there exists a positive integer $M$ such that $\rho \oplus M{\reg}$ belongs
 to the image of $\psi_{m+M|S|}$. 
\item If $f_1$, $f_2 \colon |\Ll | \pcom \to BU(m) \pcom $ are such that $\psi_m(f_1)=\psi_m(f_2)$, then there exists 
\mbox{$ h \colon |\Ll | \pcom \to BU(n) \pcom$} for some $n$ such that $f_1 \oplus h \simeq f_2 \oplus h$ and $\psi_n(h)=N{\reg}$
for some positive integer $N$.
\end{enumerate}
\end{theorem}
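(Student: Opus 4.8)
The plan is to use the obstruction theory recalled above, combined with the key fact from \fullref{corollary:Ktheory} that $p$--adic $K$--theory of $|\Ll|\pcom$ is torsion-free and concentrated in even degrees, together with the `induction' construction of \fullref{proposition-induction} to absorb obstructions into multiples of the regular representation.

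First I would analyze the obstruction groups. For a representation $\rho \in \Rep_m(\Ff)$, the obstructions to lifting through $\psi$ live in the higher limits $\higherlim{\Or(\Ff^c)}{*} F_i^{\rho}$, where $F_i^{\rho}(P) = \pi_i(\Map(\widetilde{B}P, BU(m)\pcom)_{\widetilde{B}\rho_{|P}})$. By \fullref{dwyer-zabrodsky}, $\Map(BP, BU(m)\pcom)_{B\rho_{|P}} \simeq BC_{U(m)}(\rho_{|P})\pcom$, so these homotopy groups are the $p$--completed homotopy groups of a classifying space of a compact Lie group (a product of unitary groups, the centralizer), hence finitely generated $\Z\pcom$--modules, vanishing in odd degrees above $1$, and with $\pi_2$ and $\pi_1$ controlled by the rank and component group of the centralizer. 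The crucial point is that replacing $\rho$ by $\rho \oplus M\reg$ replaces $C_{U(m)}(\rho_{|P})$ by $C_{U(m+M|S|)}(\rho_{|P} \oplus M\reg_{|P})$; since $\reg_{|P}$ contains every irreducible of $P$ with high multiplicity when $M$ is large, this centralizer becomes a product of large unitary groups, which kills $\pi_1$ obstructions (the component group becomes trivial) and makes the $\pi_2$--obstruction groups into higher limits of constant-rank functors that are detected by $K$--theory. One then uses that $\higherlim{\Or(\Ff^c)}{s}$ of these functors vanishes for $s$ large by a standard finiteness/Noetherian argument, or is killed by the $K$--theory computation, to extend the map inductively over the whole filtration $F_n|\Ll|\pcom$. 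This proves (1).

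For (2), suppose $\psi_m(f_1) = \psi_m(f_2) = \rho$. The two maps $f_1, f_2 \colon |\Ll|\pcom \to BU(m)\pcom$ agree after restriction to each $BP$ up to the coherent system, so they already agree on $F_1|\Ll|\pcom$ up to homotopy; the obstructions to constructing a homotopy $f_1 \simeq f_2$ over all of $|\Ll|\pcom$ live in $\higherlim{\Or(\Ff^c)}{n} \pi_n(\Map(\widetilde{B}P, BU(m)\pcom)_{f_P})$, the same functors as above. Here I would Whitney-sum with a map $h$ realizing a large multiple $N\reg$ of the regular representation (such a map exists by part (1) applied to $N\reg$, which is fusion-preserving by \fullref{example:regular}, taking $N$ large enough that $N\reg$ itself — or rather $N\reg \oplus M'\reg$ — lifts): adding $h$ enlarges all the relevant centralizers, again trivializing the low-degree obstruction groups and forcing the higher ones to vanish, so that $f_1 \oplus h \simeq f_2 \oplus h$. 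The bookkeeping is that part (1) only gives a lift of $N\reg \oplus M'\reg = (N+M')\reg$, but this is still a multiple of $\reg$, so renaming $N$ we get $\psi_n(h) = N\reg$ as required.

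The main obstacle I anticipate is the quantitative control of the obstruction groups: one must show that for $M$ (resp.\ $N$) sufficiently large, \emph{all} the higher limits $\higherlim{\Or(\Ff^c)}{s} F_i^{\rho \oplus M\reg}$ relevant to the inductive extension vanish, not just the first one. This requires understanding how $C_{U(m+M|S|)}(\rho_{|P} \oplus M\reg_{|P})$ varies functorially in $P$ over $\Or(\Ff^c)$ and invoking either (i) the vanishing of higher limits over $\Or(\Ff^c)$ of functors built from $\pi_*$ of $p$--completed compact Lie groups in a range — which should follow from the analogous vanishing results of Broto--Levi--Oliver used to prove the mod $p$ homology decomposition — or (ii) directly the $K$--theoretic input of \fullref{corollary:Ktheory}, which identifies the alternating sum of these contributions with a torsion-free group concentrated in even degrees, so that the odd obstruction groups must vanish after stabilization. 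Handling the $\pi_1$--layer (component groups of centralizers and the associated $\lim^1$ and $\lim^2$ terms) carefully is where most of the work lies.
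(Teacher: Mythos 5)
Your broad strategy matches the paper's (Dwyer--Zabrodsky plus Schur's lemma to identify the mapping spaces, stabilization into the regular representation, $K$--theoretic and higher-limit-vanishing inputs), but the proposal stops short at exactly the point you flag as the ``main obstacle,'' and the two routes (i) and (ii) you propose do not work separately -- the paper's argument combines them in an essential way. Corollary 3.4 of \cite{BLO1} provides a bound $N_{\Ff}$ such that $\higherlim{\Or(\Ff^c)}{i}$ of \emph{any} $\Z_{(p)}$--module functor vanishes for $i > N_{\Ff}$; this is a finite range but says nothing below $N_{\Ff}$. The missing mechanism for degrees $2 \leq i \leq N_{\Ff}$ is the following: choose $M$ large enough that the stabilization maps $BU(n_j + M k_j) \to BU$ are isomorphisms on $\pi_i$ for all $i \leq N_{\Ff}$ and all irreducibles $\mu_j$ of $P$. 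Then postcomposition with $z \colon BU(m+M|S|)\pcom \to BU\pcom$ induces an isomorphism on the relevant $\pi_i$ of the mapping spaces, and by naturality of Wojtkowiak's obstruction theory the obstruction classes for the $BU(m+M|S|)\pcom$ extension problem map isomorphically to those for the $BU\pcom$ problem. The latter are identified with the obstructions to lifting an element of $\higherlim{\Or(\Ff^c)}{}K^i(BP;\Z\pcom)$ to $K^i(|\Ll|\pcom;\Z\pcom)$, and these vanish because \fullref{corollary:Ktheory} asserts the lift exists. Without this explicit comparison step the argument is not complete.

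A smaller point: the worry about ``the $\pi_1$--layer (component groups of centralizers and the associated $\lim^1$ and $\lim^2$ terms)'' is a red herring. Once all multiplicities $n_i$ are positive (which one arranges by summing with a copy of $\reg$), Schur's lemma identifies $C_{U(m)}(\rho(P))$ with a product of unitary groups, which is \emph{connected}; hence the mapping spaces $\Map(\widetilde{B}P, BU(m)\pcom)_{\widetilde{B}\rho_{|P}}$ and $\Map(\widetilde{B}P, BU\pcom)_{z\circ\widetilde{B}\rho_{|P}}$ are simply connected, and $[E_1]$ and $[U_1]$ vanish immediately. There are no component-group or fundamental-group subtleties to handle. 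The real work is concentrated in $\pi_i$ for $2 \leq i \leq N_{\Ff}$, where the stabilization argument just described is what makes the $K$--theory do the job.
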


\begin{proof}
Let $\rho$ be a fusion-preserving representation of $S$, that is, $\rho \in \Rep_m(\Ff)$ for some $m > 0$. 
For each $P \leq S$, let $\{\mu_1, \ldots, \mu_r\}$ be the set of all irreducible representations of $P$ 
and consider the decompositions
\[ \rho_{|P} = n_1 \mu_1 \oplus \cdots \oplus n_r \mu_r \]
\[ \reg_{|P} = k_1 \mu_1 \oplus \cdots \oplus k_r \mu_r \]
as sums of irreducible representations of $P$. Now
\[ \Map(\widetilde{B}P,BU(m) \pcom)_{\widetilde{B}\rho_{|P}} \simeq BC_{U(m)}(\rho(P)) \pcom \simeq \prod_{i=1}^r BU(n_i) \pcom, \]
where the first equivalence follows from Dwyer--Zabrodsky \cite{DZ} and the second equivalence is a consequence of 
Schur's Lemma. In particular, the space $\Map(\widetilde{B}P,BU(m) \pcom)_{\widetilde{B}\rho_{|P}}$ is simply 
connected.

The proof of Proposition 2.4 in \cite{JO} shows that the component of the constant map of $\Map(\widetilde{B}P,BU \pcom)$ 
satisfies
\[ \pi_i \left( \Map(\widetilde{B}P,BU \pcom)_0 \right) \cong K^{-i}(\widetilde{B}P) \otimes \Z \pcom \cong K_P^{-i}(\pt) \otimes \Z \pcom \]
for $i>0$ and $ R(P) \otimes \Z \pcom \cong K(\widetilde{B}P;\Z \pcom) $ by Lemma 2.1 in \cite{A}. Hence, if $z \colon BU(m) \pcom \to BU \pcom$ is the map induced by the inclusion $ U(m) \subseteq U$, we also have for $ i > 0$
\[ 
\pi_i \left( \Map(\widetilde{B}P,BU \pcom)_{z \circ \widetilde{B}\rho_{|P}} \right) \cong K^{-i}(\widetilde{B}P;\Z \pcom) \]
because all the components of $\Map(\widetilde{B}P,BU \pcom)$ are homotopy equivalent. Thus
the mapping space $\Map(\widetilde{B}P,BU \pcom)_{z \circ \widetilde{B}\rho_{|P}}$ is also simply connected. 
Therefore the first obstructions  $[E_1]$ to the respective extension problems vanish. Hence there are maps from
$ F_2 | \Ll | \pcom$ to $BU(m) \pcom$ and $BU \pcom$ that extend the maps $\widetilde{B}\rho_{|P}$
and $z \circ \widetilde{B}\rho_{|P}$, respectively. 
These results continue to hold if we replace $\rho$ by $\rho \oplus M {\reg}$ for any $M$.

By Corollary 3.4 in Broto--Levi--Oliver \cite{BLO1}, there exists a positive integer $N_{\Ff}$ such that
the higher limits of any functor $ G \colon \Or(\Ff^c) \to \Z _{(p)}$--Mod vanish above dimension $N_{\Ff}$.
We can assume $N_{\Ff} \geq 2$. Since the homotopy groups of the classifying spaces of complex unitary groups stabilize to the 
homotopy groups of $BU$, we can find a positive integer $ M>0$ such that the maps $BU(n_j+Mk_j) \to BU$
induce an isomorphism on the $i$th homotopy group for all $ i \leq N_{\Ff}$ and for all $j = 1, \ldots, r$. 
If $i$ is even, postcomposition with the map $ z \colon BU(m+M|S|) \pcom \to BU \pcom$ induces a
commutative diagram
\[ 
\diagram
\pi_i \left( \prod \limits_{j=1}^r BU(n_j+Mk_j) \pcom \right) \dto^\cong \rto &  R(P) \otimes \Z \pcom \\
\pi_i(\Map(\widetilde{B}P,BU(m+M|S|)\pcom)_{\widetilde{B}(\rho \oplus M{\reg})_{|P}}) \rto & \pi_i(\Map(\widetilde{B}P,BU \pcom)_{z \circ \widetilde{B}(\rho \oplus M{\reg})_{|P}})\uto^{\cong}\\\enddiagram 
\]
where the top row corresponds to the map $ \prod BU(n_j + Mk_j) \pcom \to \prod BU \pcom$ induced by the inclusions \mbox{$U(n_j + Mk_j) \to U$}
and therefore it induces an isomorphism on the $i$th homotopy group for even $ i \leq N_{\Ff}$ (see \cite[Proposition  A.2]{O2}). A similar argument shows that the bottom isomorphism also holds for odd $i \leq N_{\Ff}$ since these homotopy
groups are all zero. Since the obstruction theory of Wojtkowiak \cite{W} is natural with respect to postcomposition, the obstructions of one extension problem are mapped 
to the other.  

Just as before, we have 
\[ \higherlim{\Or(\Ff^{c})}{i+1} \pi_i \left( \Map( BP,BU \pcom )_{z \circ \widetilde{B}(\rho \oplus M{\reg} )_{|P}} \right) \cong 
\! \! \! \! \! \! \higherlim{\Or(\Ff^{c})}{i+1} K^{-i}(BP;\Z \pcom). \]
This isomorphism sends the obstructions of the extension problem with $BU \pcom$
to the obstruction classes associated to the problem of existence of an element of 
$K^{-i}(| \Ll | \pcom ; \Z \pcom)$ that maps to the element of $\higherlim{\Or(\Ff^{c})}{} K^{-i}(BP;\Z \pcom)$ determined by the 
representation $ \rho \oplus M {\reg} $. \fullref{corollary:Ktheory} tells us that such an 
element exists. Therefore it is possible to construct a map $F_{N_{\Ff}+1} | \Ll | \pcom \to BU \pcom$ 
that extends the maps $z \circ \widetilde{B}(\rho \oplus M{\reg})_{|P}$. The obstructions of the extension
problem with $BU(m+M|S|)\pcom$ are mapped to the obstructions of the extension problem with $BU \pcom$
via an isomorphism, so these obstructions must vanish. Hence it is possible to construct a map 
\[ F_{N_{\Ff}+1} | \Ll | \pcom \to BU(m+M|S|)\pcom \]
that extends the maps $\widetilde{B}(\rho \oplus M{\reg})_{|P}$. Finally the lim$^{i+1}$--term of $F_i^{\rho \oplus M{\reg}}$ 
vanishes when $ i \geq N_{\Ff}$, so we can further extend it to a map $ f \colon | \Ll | \pcom \to BU(m+M|S|)\pcom$ which satisfies $\psi_{m+M|S|}(f) = \rho \oplus M {\reg}$.

Let $f_1$, $f_2 \colon | \Ll | \pcom \to BU(m) \pcom$ be such that $\psi_m(f_1) = \psi_m(f_2) = \rho $. By the first part
applied to the regular representation of $S$, there is some $M > 0$ such that the obstructions to existence vanish in each
step, hence 
\[ M {\reg} = \psi_{M|S|}(h) \]
for a certain map $ h \colon | \Ll | \pcom \to  BU(M|S|) \pcom$. Then we have
\[ \psi_{m + M|S|}(f_1 \oplus h) = \rho \oplus M {\reg} = \psi_{m + M|S|}(f_2 \oplus h). \]
Now we follow the same process as in the first part to construct a homotopy between $f_1 \oplus h$ and $f_2 \oplus h$. 
In this process, the first obstruction to uniqueness $[U_1]$ vanishes for the same reason, the obstructions up to
filtration level $N_{\Ff}$ vanish because \fullref{corollary:Ktheory} tells us that there is a unique element
in $K^{-i}(| \Ll | \pcom ; \Z \pcom)$ that maps to the element of $\higherlim{\Or(\Ff^{c})}{} K^{-i}(BP;\Z \pcom)$ determined by the 
representation $ \rho \oplus M {\reg} $. And the rest of obstructions vanish by Corollary 3.4 in Broto--Levi--Oliver \cite{BLO1}.
\end{proof}

\begin{remark}
\label{corollary:regular}
Note that the previous theorem shows that for any $p$--local finite group $(S,\Ff,\Ll)$, there exists 
a map $f \colon |\Ll| \pcom\to BU(n) \pcom$ such that $f_{|\widetilde{B}S} 
\simeq \widetilde{B}(M{\reg}) $ for some $M>0$.
\end{remark}

The maps $\psi_n$ from \fullref{main-theorem-2} assemble to define a map of monoids
\[ \coprod_{n \geq 0} [ |\Ll| \pcom , BU(n) \pcom] \to \coprod_{n \geq 0} \Rep_n(\Ff) \]
where the monoid structure on the first set is described in \fullref{DirectSum} and
on the right hand side is given by direct sum of representations. Therefore 
it induces a group homomorphism between their Grothendieck groups 
\[ \Psi \colon \K'(|\Ll| \pcom) \to R(\Ff) \]
such that the following diagram commutes
\[
\diagram
\K'(|\Ll| \pcom) \rto^{\Psi} \dto_{\res} & R(\Ff) \dto^{\res} \\
\K'(BS) \rto_{\cong} & R(S) 
\enddiagram
\]
The following lemma relates this group to the Grothendieck group
of complex vector bundles over $|\Ll| \pcom$.

\begin{lemma}
\label{VectorBundles}
The Grothendieck group of complex vector bundles over $|\Ll|\pcom$ 
is isomorphic to $\K'(|\Ll| \pcom)$.
\end{lemma}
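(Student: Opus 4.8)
The plan is to reduce the statement to a question about homotopy classes of maps and then to obstruction theory. A complex vector bundle of rank $n$ over $|\Ll|\pcom$ is classified by an element of $[|\Ll|\pcom,BU(n)]$, isomorphism classes of bundles corresponding to homotopy classes of maps and Whitney sum to the block--sum pairing $[X,BU(m)]\times[X,BU(n)]\to[X,BU(m+n)]$ induced by $U(m)\times U(n)\to U(m+n)$ (see \fullref{DirectSum}). Hence the Grothendieck group of complex vector bundles over $|\Ll|\pcom$ is the Grothendieck group of the monoid $\coprod_{n\geq 0}[|\Ll|\pcom,BU(n)]$. Since $p$--completion commutes with finite products, $B(U(m)\times U(n))\pcom\simeq BU(m)\pcom\times BU(n)\pcom$, so post--composition with the maps $BU(n)\to BU(n)\pcom$ is a homomorphism of monoids $\coprod_n[|\Ll|\pcom,BU(n)]\to\coprod_n[|\Ll|\pcom,BU(n)\pcom]$, and the induced map of Grothendieck groups is exactly the natural comparison between the Grothendieck group of complex vector bundles over $|\Ll|\pcom$ and $\K(|\Ll|\pcom)$. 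Thus it suffices to prove that for every $n$ the map
\[ [|\Ll|\pcom,BU(n)]\longrightarrow[|\Ll|\pcom,BU(n)\pcom] \]
is a bijection.

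Let $F$ be the homotopy fibre of the $p$--completion map $BU(n)\to BU(n)\pcom$. Because $BU(n)\pcom$ is simply connected and $BU(n)\to BU(n)\pcom$ is an $\F_p$--homology isomorphism, $F$ is a nilpotent, $\F_p$--acyclic space, so every homotopy group $\pi_j(F)$ is uniquely $p$--divisible, that is, a $\Z[1/p]$--module. The key input is then that $H^{m}(|\Ll|\pcom;M)=0$ for every $m\geq 1$ and every uniquely $p$--divisible abelian group $M$. To see this, recall from \fullref{section:generalized} that $\Sigma^{\infty}|\Ll|\pcom$ is homotopy equivalent to $\B\Ff$, which is a retract of $\Sigma^{\infty}BS$ via $\sigma_{\Ff}$ and $t_{\Ff}$; hence $\widetilde H_*(|\Ll|\pcom;\Z)$ is a direct summand of $\widetilde H_*(BS;\Z)$, and since $S$ is a finite $p$--group the group $\widetilde H_m(BS;\Z)$ is a finite abelian $p$--group for every $m\geq 1$, and therefore so is $\widetilde H_m(|\Ll|\pcom;\Z)$. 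By the universal coefficient theorem $H^m(|\Ll|\pcom;M)$ is an extension of $\Hom_{\Z}(H_m(|\Ll|\pcom;\Z),M)$ by $\operatorname{Ext}^1_{\Z}(H_{m-1}(|\Ll|\pcom;\Z),M)$; for $m\geq 1$ the first group vanishes because $M$ has no $p$--torsion while $H_m(|\Ll|\pcom;\Z)$ is a finite $p$--group, and the second vanishes because $M$ is $p$--divisible and $H_{m-1}(|\Ll|\pcom;\Z)$ is either a finite $p$--group (when $m\geq 2$) or free (when $m=1$).

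It remains to run the obstruction theory. Since $BU(n)\pcom$ is simply connected, the fibration $F\to BU(n)\to BU(n)\pcom$ has trivial monodromy, so for any $g\colon|\Ll|\pcom\to BU(n)\pcom$ the pulled--back fibration over $|\Ll|\pcom$ has fibre $F$ and untwisted coefficient systems. Proceeding along the Moore--Postnikov tower of this fibration, the obstructions to lifting $g$ along $BU(n)\to BU(n)\pcom$ lie in the groups $H^{j+1}(|\Ll|\pcom;\pi_j(F))$, and, once a lift exists, the homotopy classes of lifts are acted on by the groups $H^{j}(|\Ll|\pcom;\pi_j(F))$ ($j\geq 1$) through the successive stages of the tower. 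By the previous paragraph all of these groups vanish, so every $g$ admits a lift, unique up to homotopy; hence $[|\Ll|\pcom,BU(n)]\to[|\Ll|\pcom,BU(n)\pcom]$ is a bijection and the lemma follows.

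The step requiring the most care is the last one: $|\Ll|\pcom$ is infinite--dimensional and $F$ is not simply connected ($\pi_1(F)\cong\Z\pcom/\Z$), so one must check that the skeletal induction of the obstruction theory converges. This is ensured by the finiteness of $\widetilde H_*(|\Ll|\pcom;\Z)$, which both kills all the obstruction and indeterminacy groups above and makes the relevant $\lim^1$--terms vanish by a Mittag--Leffler argument. Consequently the only substantive point is the cohomology vanishing of the second paragraph, which itself rests on Ragnarsson's stable splitting of $\Sigma^{\infty}BS$ recalled in \fullref{section:generalized}.
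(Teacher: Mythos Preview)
Your proof is correct and follows the same overall strategy as the paper: reduce to showing that the $p$--completion map induces a bijection $[|\Ll|\pcom,BU(n)]\to[|\Ll|\pcom,BU(n)\pcom]$ for each $n$, using that the reduced $\Z[1/p]$--cohomology of $|\Ll|\pcom$ vanishes.

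The difference is one of packaging. The paper derives $\widetilde{H}^*(|\Ll|\pcom;\Z[1/p])=0$ from \fullref{main-theorem-3} and then invokes Theorem~1.5 of Miller \cite{Mi} as a black box to get the bijection. You instead obtain the homological input directly from Ragnarsson's stable splitting (integral homology is a summand of $\widetilde{H}_*(BS;\Z)$, hence a finite $p$--group) and then run the Moore--Postnikov obstruction theory for $BU(n)\to BU(n)\pcom$ by hand, using that the fibre has uniquely $p$--divisible homotopy groups. Your route is more self-contained and avoids citing Miller, at the cost of having to address convergence of the tower; the paper's route is shorter but relies on an external reference whose proof is essentially the argument you wrote out. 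One small remark: your appeal to ``$F$ is nilpotent, hence $\pi_j(F)$ is uniquely $p$--divisible'' is correct but could be replaced by a direct reading of the long exact sequence in homotopy for $F\to BU(n)\to BU(n)\pcom$, since $\pi_k(BU(n)\pcom)\cong\pi_k(BU(n))\otimes\Z\pcom$; this sidesteps the need to verify nilpotence of $F$.
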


\begin{proof}
Note that $p$--completion defines a map 
\[ [ | \Ll | \pcom , BU(n)] \to [ | \Ll | \pcom , BU(n) \pcom ]. \]
By \fullref{main-theorem-3}, we have $\widetilde{H}^k(| \Ll | \pcom;\Z[1/p]) = 0 $.
The space $BU(n)$ is simply connected, in particular nilpotent. Moreover, pointed homotopy classes of maps
into $BU(n)$ and $BU(n) \pcom$ coincide with unpointed homotopy classes. Therefore Theorem 1.5 in Miller \cite{Mi} 
shows that this map is a bijection. This defines an isomorphism of monoids
\[ \coprod_{n \geq 0} [ | \Ll | \pcom , BU(n)] \to \coprod_{n \geq 0} [ | \Ll | \pcom , BU(n) \pcom ], \]
hence their Grothendieck groups are isomorphic.
\end{proof}

Recall that $\K(X)$ denotes the Grothendieck group of complex vector bundles over $X$. Given
the result of the previous lemma, we will abuse the notation and use $\K(|\Ll| \pcom)$ for both
Grothendieck groups. We will now show that $\Psi$ is an isomorphism.

\begin{theorem}
\label{theorem:Grothendieck}
The map $\Psi \colon \K(|\Ll| \pcom) \to R(\Ff)$ is an isomorphism.
\end{theorem}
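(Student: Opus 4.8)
The plan is to prove injectivity and surjectivity of $\Psi$ separately, using \fullref{main-theorem-2} as the main engine together with the commutative square relating $\Psi$ to the classical isomorphism $\K(BS)\cong R(S)$. The key algebraic observation is that in the Grothendieck group $R(\Ff)$, the class $[\reg]$ maps to $[\reg]\in R(S)$ under the injective restriction map $\res\colon R(\Ff)\hookrightarrow R(S)$, and the classical isomorphism $\K(BS)\cong R(S)$ carries $[\reg]$ to an actual vector bundle; so adding multiples of $\reg$ is harmless after passing to Grothendieck groups. Throughout I will use that $\res\colon R(\Ff)\to R(S)$ is injective, which follows from \fullref{respectfusion} and \fullref{cr} since $\Rep_n(\Ff)\hookrightarrow\Rep_n(S)$ compatibly in $n$, and that $\res\colon\K(|\Ll|\pcom)\to\K(BS)$ corresponds to $\Theta^*$.

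First I would prove surjectivity. Given a virtual representation $x=[\rho_1]-[\rho_2]\in R(\Ff)$ with $\rho_1,\rho_2\in\Rep_*(\Ff)$, apply \fullref{main-theorem-2}(1) to $\rho_1$: there is $M_1>0$ and a map $g_1\colon|\Ll|\pcom\to BU(\cdot)\pcom$ with $\psi(g_1)=\rho_1\oplus M_1\reg$. Similarly there is $g_2$ with $\psi(g_2)=\rho_2\oplus M_2\reg$. Enlarging by further copies of $\reg$ (again using part (1) applied to $\reg$ itself, as in the proof of \fullref{corollary:regular}) we may assume $M_1=M_2=M$. Then $[g_1]-[g_2]\in\K(|\Ll|\pcom)$ satisfies $\Psi([g_1]-[g_2])=[\rho_1\oplus M\reg]-[\rho_2\oplus M\reg]=[\rho_1]-[\rho_2]=x$, since the $M\reg$ terms cancel in $R(\Ff)$. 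Hence $\Psi$ is surjective.

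Next I would prove injectivity. Suppose $y\in\K(|\Ll|\pcom)$ with $\Psi(y)=0$. Write $y=[f_1]-[f_2]$ for maps $f_i\colon|\Ll|\pcom\to BU(m)\pcom$ (after stabilizing to a common target $m$ using $\psi$ and the fact that $\Psi([f_1])=[\psi(f_1)]$, $\Psi([f_2])=[\psi(f_2)]$). Then $[\psi_m(f_1)]=[\psi_m(f_2)]$ in $R(\Ff)$, and since $\res$ is injective and $\K(BS)\cong R(S)$ detects isomorphism of vector bundles over $BS$ via restriction along $\Theta$, after adding a common fusion-preserving representation — realized by a map $h_0\colon|\Ll|\pcom\to BU\pcom$ via part (1) — we may arrange $\psi_m(f_1)=\psi_m(f_2)$ as honest elements of $\Rep_m(\Ff)$, not merely virtually. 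Now apply \fullref{main-theorem-2}(2): there is $h\colon|\Ll|\pcom\to BU(n)\pcom$ with $f_1\oplus h\simeq f_2\oplus h$, hence $[f_1]+[h]=[f_2]+[h]$ in $\K(|\Ll|\pcom)$, so $[f_1]=[f_2]$ and $y=0$. Therefore $\Psi$ is injective, and combined with surjectivity it is an isomorphism.

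The main obstacle I anticipate is bookkeeping in the reduction from "equal virtual representations" to "equal honest representations" in the injectivity argument: one must ensure that the map used to absorb a common representation is itself in the image of $\psi$ (so it contributes a genuine class in $\K(|\Ll|\pcom)$), and that stabilizing $f_1,f_2$ to a common unitary target is compatible with the Whitney-sum monoid structure of \fullref{DirectSum}. This is precisely where the presence of multiples of the regular representation in \fullref{main-theorem-2} — rather than an arbitrary correction term — is used, since $[\reg]$ is realized by an actual bundle over $BS$ and its class is detected faithfully by $\res$; the cancellation of these terms in the Grothendieck group is what makes both halves of the argument go through cleanly.
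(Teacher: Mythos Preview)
Your proposal is correct and follows essentially the same approach as the paper: both halves rely on \fullref{main-theorem-2}, realizing representations after adding multiples of $\reg$ for surjectivity, and reducing to equal honest representations before invoking part (2) for injectivity. The only cosmetic differences are that the paper handles surjectivity with three summands rather than equalizing $M_1=M_2$, and for injectivity it passes through $R(S)$ and invokes \fullref{proposition-induction} to make the stabilizing representation fusion-preserving, whereas your argument can stay inside $R(\Ff)$ directly (so your mention of $\res$ being injective is not actually needed in your version).
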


\begin{proof}
First we check that $\Psi$ is a monomorphism. Assume that we have two maps \mbox{$f \colon |\Ll| \pcom \to BU(n) \pcom$}
and $g \colon |\Ll| \pcom \to BU(m) \pcom$ such that $\Psi(f-g)=0$. Then we must have \mbox{$f_{|BS} - g_{|BS} = 0$}
in $\K(BS)$ and so there exists $ f' \colon BS \to BU(k) \pcom $ such that \mbox{$ f_{|BS} \oplus f' \simeq g_{|BS} \oplus f'$}. This implies $m = n$. Since $ [BS,BU(k)\pcom] \cong \Rep(S,U(k))$, we can assume that $f'$ is induced by a representation
$\rho$ of $S$. By \fullref{proposition-induction}, we can assume that $\rho$ is fusion-preserving, and
by \fullref{main-theorem-2}, that it belongs to the image of $\psi_k$. Hence we can take $f'$ to be the restriction
of a map $ t \colon |\Ll| \pcom \to BU(k) \pcom$ and we obtain $\psi_{m+k}(f \oplus t) = \psi_{m+k}(g \oplus t)$.
By \fullref{main-theorem-2}, there exists $h \colon |\Ll | \pcom \to BU(r) \pcom $ 
for some $r > 0$ such that $ f \oplus t \oplus h \simeq g \oplus t \oplus h$. Therefore $ f - g  = 0$ in $\K(|\Ll| \pcom)$.

Let $\chi \in R(\Ff)$, say $\chi = \rho_1 - \rho_2$, where both $\rho_1$ and $\rho_2$ are fusion-preserving. 
By \fullref{main-theorem-2} there exist positive integers $k_1$, $k_2$ and maps $f \colon |\Ll| \pcom \to BU(n_1) \pcom$ 
and $ g \colon | \Ll | \pcom \to BU(n_2) \pcom$ such that
\[ \psi_{n_1}(f)= \rho_1 + k_1 {\reg} \qquad \text{ and } \qquad \psi_{n_2}(g)= \rho_2 + k_2 {\reg}. \]
We can take $k_2$ big enough so that $(k_2 - k_1){\reg} \in \im(\psi_{(k_2-k_1)|S|})$. Then
\[ \chi = (\rho_1 + k_1 {\reg})-(\rho_2 + k_2 {\reg}) + (k_2 - k_1){\reg} \in \im(\Psi). \qedhere \]
\end{proof}

One could wonder at this point whether the Grothendieck construction of the monoid of 
fusion-preserving representations coincides with the inverse limit of representation rings
over the orbit category. The answer is given by the following proposition.

\begin{proposition}
\label{EquivalentDescription}
There is an isomorphism $ R(\Ff) \cong \higherlim{\Or(\Ff^c)}{} R(P)$.
\end{proposition}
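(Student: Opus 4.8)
The plan is to compare the two monoids whose Grothendieck groups appear in the statement, namely $\coprod_n \Rep_n(\Ff)$ (whose group completion is $R(\Ff)$ by definition) and the collection of $n$-dimensional elements of $\higherlim{\Or(\Ff^c)}{} R(P)$. By \fullref{cr} we already have a bijection $\Rep_n(\Ff) \cong \higherlim{\Or(\Ff^c)}{} \Rep_n(P)$ for every $n$, and these bijections are clearly compatible with direct sum, so they assemble into an isomorphism of monoids
\[ \coprod_{n \geq 0} \Rep_n(\Ff) \stackrel{\cong}{\longrightarrow} \coprod_{n \geq 0} \higherlim{\Or(\Ff^c)}{} \Rep_n(P). \]
The left side group-completes to $R(\Ff)$, so it remains to identify the group completion of the right side with $\higherlim{\Or(\Ff^c)}{} R(P)$.

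First I would note that $\higherlim{\Or(\Ff^c)}{} R(P)$ is the limit of the functor $P \mapsto R(P)$, which is the group completion of the functor $P \mapsto \coprod_n \Rep_n(P)$ evaluated objectwise. Since each $R(P)$ is a free abelian group (on the irreducible characters of $P$) and the limit is over a finite category, $\higherlim{\Or(\Ff^c)}{} R(P)$ is a finitely generated abelian group; in particular every element is a difference of two elements lying in the submonoid $\higherlim{\Or(\Ff^c)}{} \left( \coprod_n \Rep_n(P) \right)$, because a compatible family of virtual characters $(\chi_P)_P$ can be written as $(\chi_P + k\,\reg_{|P})_P - (k\,\reg_{|P})_P$ for $k$ large enough, and both families are compatible (the regular representation restricts to $\reg_{|P}$ and is fusion-preserving by \fullref{example:regular}, so the family $(k\,\reg_{|P})_P$ lies in the limit of the monoid). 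This shows the natural map from the group completion of $\higherlim{\Or(\Ff^c)}{}\left(\coprod_n \Rep_n(P)\right)$ to $\higherlim{\Or(\Ff^c)}{} R(P)$ is surjective.

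For injectivity I would use that group completion commutes with finite limits in this situation, or argue directly: if two compatible families $(\rho_P)_P$ and $(\rho'_P)_P$ of genuine representations become equal after group completion, then for each $P$ there is $k_P$ with $\rho_P \oplus k_P\,\reg_{|P} \cong \rho'_P \oplus k_P\,\reg_{|P}$; taking $k = \max_P k_P$ and using cancellation in each $R(P)$ one gets that the two families already agree as elements of $\higherlim{\Or(\Ff^c)}{}\left(\coprod_n \Rep_n(P)\right)$ once one accounts for the $n$-grading, so the map on group completions is injective. Combining with the monoid isomorphism of the first paragraph gives $R(\Ff) \cong \higherlim{\Or(\Ff^c)}{} R(P)$.

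The main obstacle is the bookkeeping in the second and third steps: one must be careful that \fullref{cr} is an isomorphism of \emph{graded} monoids and that the group completion is taken in the graded sense, so that adding multiples of $\reg$ (which shifts degree by $|S|$) does not create spurious identifications. The cleanest way to handle this is probably to observe that $\higherlim{\Or(\Ff^c)}{}$ is an exact functor on the category of $\Or(\Ff^c)$-diagrams of abelian groups (higher limits being the derived functors), so it commutes with the group completion $K_0(-) = (-)^{\mathrm{gp}}$ applied to the diagram $P \mapsto \coprod_n \Rep_n(P)$ precisely because each $\coprod_n \Rep_n(P) \to R(P)$ is a group completion and limits of free abelian groups over finite categories are again (sub)groups of finitely generated free abelian groups; then $R(\Ff) = K_0\!\left(\coprod_n \Rep_n(\Ff)\right) = K_0\!\left(\higherlim{\Or(\Ff^c)}{}\coprod_n \Rep_n(P)\right) \cong \higherlim{\Or(\Ff^c)}{} K_0\!\left(\coprod_n \Rep_n(P)\right) = \higherlim{\Or(\Ff^c)}{} R(P)$.
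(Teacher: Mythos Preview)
Your argument is essentially correct and arrives at the result by a genuinely different route from the paper. The paper defines the restriction map $R(\Ff)\to\higherlim{\Or(\Ff^c)}{}R(P)$ directly, observes injectivity is clear, and for surjectivity takes $\chi_S=\alpha_1-\alpha_2$ and uses the biset-induction construction $\alpha_2^{\Ff}$ of \fullref{proposition-induction} together with Maschke to rewrite $\chi_S$ as a difference of fusion-preserving actual representations. You instead pass through \fullref{cr} at the monoid level and then show that group completion commutes with the limit here; your surjectivity step replaces the $\rho^{\Ff}$ construction by the more elementary observation that $\chi_S+k\,\reg$ is an honest character for $k$ large and remains fusion-preserving. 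This is a pleasant simplification: it avoids invoking the characteristic biset entirely.

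Two minor points. First, your injectivity paragraph is over-engineered: since each $\coprod_n\Rep_n(P)$ embeds in $R(P)$, the monoid $\coprod_n\higherlim{}{}\Rep_n(P)$ is cancellative, so its group completion injects into $\prod_P R(P)$ and hence into $\higherlim{}{}R(P)$; there is no need for the $k_P$ and $\max_P k_P$ business. Second, drop the final paragraph invoking exactness of $\higherlim{}{}$ to commute it past group completion: limits are only left exact, and group completion is not an exact functor, so the slogan does not literally apply. Your concrete argument in the preceding two paragraphs already does the job; the abstract gloss only muddies it.
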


\begin{proof}
Consider the map
\begin{gather*}
\varphi \colon R(\Ff) \to \higherlim{\Or(\Ff^c)}{} R(P) \\
\qquad \enspace \chi \mapsto (\res_P\chi)_P
\end{gather*}
where $\res_P$ is the composition $R(\Ff) \to R(S) \to R(P)$ of the respective restriction maps.
If $\chi = \rho_1 - \rho_2$, where $\rho_1$ and $\rho_2$ are fusion-preserving representations of $S$, 
then given $f \colon P \to S$ in $\Or(\Ff^c)$
\[ f^*\res_S(\rho_1 - \rho_2) = f^*(\rho_1) - f^*(\rho_2) = \res_P(\rho_1) - \res_P(\rho_2) = \res_P(\rho_1-\rho_2) \]
and so $\varphi$ is well defined. This map is clearly injective. On the other hand, given
an element $(\chi_P)_P$ of the inverse limit, consider $\chi_S = \alpha_1 - \alpha_2$. To
show surjectivity, it suffices to show that $\chi_S$ can be written as the formal difference
of two fusion-preserving representations. By \fullref{proposition-induction} and 
Maschke's Lemma, there is a representation
$\beta$ such that $\alpha_2 \oplus \beta = \alpha_2^{\Ff}$ and $\alpha_2^{\Ff}$ is fusion-preserving.
Then
\[ \chi_S = (\alpha_1 \oplus \beta) - \alpha_2^{\Ff} \]
and therefore, given $f \colon P \to S$ in $\Or(\Ff^c)$ we have
\[ \res_P(\alpha_1 \oplus \beta) - \res_P(\alpha_2^{\Ff}) = 
f^*(\alpha_1 \oplus \beta) - f^*(\alpha_2^{\Ff}) = f^*(\alpha_1 \oplus \beta) - \res_P(\alpha_2^{\Ff}). \]
That is, $\alpha_1 \oplus \beta$ is fusion-preserving as we wanted to show.
\end{proof}

\begin{remark}
Equivalently, the projection to the $R(S)$--component shows that we can also see 
$R(\Ff)$ as the subring of stable elements of $R(S)$.
\end{remark}

\section{Duality}
\label{Duality}

This section contains the proof of \fullref{Gorenstein}, that is, $C^*(|\Ll|;\F_p)\to \F_p$ is 
Gorenstein for any $p$--local finite group $(S,\Ff,\Ll)$. The motivation for this comes from extending
Benson--Carlson duality \cite{BC} to cohomology rings of $p$--local finite groups. This phenomenon
was already observed on the computation \cite{Gr} by Grbi\'c of the $\F_2$--cohomology rings of the exotic 
$2$--local finite groups constructed by Levi and Oliver in \cite{LO}. This suggested that an extension of Benson--Carlson 
duality should hold for $p$--local finite groups.

The strategy is to follow Example 10.3 of Dwyer--Greenlees--Iyengar \cite{DGI}, where it is shown that $C^*(BG;\F_p)\to \F_p$ 
is Gorenstein for any finite group $G$. The main ingredient in their proof is the existence of a complex faithful representation of $G$ into some $SU(n)$ 
such that $H^*(SU(n)/G;\F_p)$ is a Poincar\'e duality algebra. Our strategy is to mimic their proof using the 
existence of a homotopy monomorphism $|\Ll| \pcom \to BSU(m) \pcom$ at the prime $p$ from \fullref{main-theorem-1}. 
The first step in our proof of \fullref{Gorenstein} is to show that the mod $p$ cohomology of the 
homotopy fibre of such a map is a Poincar\'e duality algebra. This application of \fullref{main-theorem-1} 
was suggested to us by John Greenlees.

In order to prove \fullref{main-theorem-1}, we recall the notion of homotopy monomorphism at the
prime $p$ from Cantarero--Castellana \cite{CC}.

\begin{definition}
A connected pointed space $X$ is $B\Z/p$--null if the
pointed mapping space $\Map_*(B\Z/p,X)$ is contractible 
for any choice of basepoint in $X$. A map $f \colon X \to Y$ 
is called a homotopy monomorphism at $p$ if the homotopy
fibre of $f \pcom$ is $B\Z/p$--null.
\end{definition}

When the prime $p$ in question is clear, we will just write
homotopy monomorphism. Recall that a space is called $\F_p$-finite
if its $\F_p$-cohomology ring is a finite $\F_p$-vector space.

\begin{theorem}
\label{main-theorem-1}
There exists a homotopy monomorphism $|\Ll| \pcom \to BSU(m) \pcom$ for some \mbox{$m >0$}.
\end{theorem}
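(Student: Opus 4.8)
The plan is to construct the homotopy monomorphism by combining the results already established in the preceding sections. By \fullref{corollary:regular}, there is a map $f \colon |\Ll|\pcom \to BU(n)\pcom$ whose restriction to $\widetilde{B}S$ is $\widetilde{B}(M\reg)$ for some $M > 0$; here $M\reg$ is a fusion-preserving representation of $S$ containing many copies of the regular representation, so in particular it is faithful on $S$. First I would arrange to land in $BSU$ rather than $BU$: by tensoring with a suitable one-dimensional representation, or by adding a complementary line bundle, one can force the determinant to be trivial. More precisely, since $\det$ of a fusion-preserving representation is fusion-preserving, and line bundles over $|\Ll|\pcom$ are detected by $H^2(|\Ll|\pcom;\Z\pcom)$ (which by \fullref{main-theorem-3} injects into $H^2(BS;\Z\pcom)$), one can replace $f$ by $f \oplus \bar{L}$ where $L$ is the determinant line, producing a map $|\Ll|\pcom \to BSU(m)\pcom$ with $m = n+1$ whose restriction to $\widetilde{B}S$ is $\widetilde{B}\rho$ for a faithful representation $\rho$ of $S$ with trivial determinant. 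Call this map $g$.

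Next I would verify that $g$ is a homotopy monomorphism, i.e.\ that the homotopy fibre $F$ of $g\pcom$ is $B\Z/p$--null. The key tool is Miller's theorem / Lannes theory together with the homology decomposition $|\Ll| \simeq \hocolim_{\Or(\Ff^c)} \widetilde{B}$. Mapping $B\Z/p$ into the fibration $F \to |\Ll|\pcom \to BSU(m)\pcom$, it suffices to show that $\Map_*(B\Z/p, |\Ll|\pcom) \to \Map_*(B\Z/p, BSU(m)\pcom)$ induces a weak equivalence onto the appropriate components. For a finite $p$--group $P$ and the faithful representation $\rho$, the map $\widetilde{B}P \to BSU(m)\pcom$ has $\Map_*(B\Z/p, -)$--behaviour controlled by $BC_{SU(m)}(\rho(P))\pcom$ via Dwyer--Zabrodsky (\fullref{dwyer-zabrodsky}): the pointed mapping space from $B\Z/p$ into the homotopy fibre of $\widetilde{B}P \to BSU(m)\pcom$ is contractible precisely because $\rho$ restricted to any $\Z/p \le P$ is faithful, so $C_{SU(m)}(\rho|_{\Z/p})$ has the same $p$--completed classifying space whether computed over $P$ or over $C_P(\Z/p)$. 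Assembling these equivalences over $\Or(\Ff^c)$ via the homology decomposition, and using that $\Map_*(B\Z/p,-)$ commutes with the relevant homotopy colimits after $p$--completion (Bousfield--Kan, Miller), one concludes that $\Map_*(B\Z/p, F)$ is contractible.

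A cleaner route to the same conclusion is to appeal directly to the framework of Cantarero--Castellana \cite{CC}, where homotopy monomorphisms (unitary embeddings) of $p$--local compact groups are studied: the criterion there is that a map $f\colon |\Ll|\pcom \to BU(m)\pcom$ with $f_{|\widetilde{B}S} \simeq \widetilde{B}\rho$ is a homotopy monomorphism whenever $\rho$ is a faithful representation of $S$, and this applies verbatim in the finite case since $S$ is then an honest finite $p$--group. Thus one reduces \fullref{main-theorem-1} to: (i) producing via \fullref{corollary:regular} a map $|\Ll|\pcom \to BU(n)\pcom$ restricting to a faithful representation on $\widetilde{B}S$, and (ii) correcting the determinant to land in $BSU(m)\pcom$.

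The main obstacle, I expect, is step (ii) — ensuring the determinant can be trivialized at the level of $|\Ll|\pcom$ and not merely on $\widetilde{B}S$. One must check that the line bundle $\det f$ over $|\Ll|\pcom$ admits a ``square root'' or inverse that is itself pulled back from $|\Ll|\pcom$ (so that adding it keeps us over the $p$--local finite group), which comes down to a computation in $H^2(|\Ll|\pcom;\Z\pcom) \cong \higherlim{\Or(\Ff^c)}{} H^2(BP;\Z\pcom)$ and the observation that $\det(M\reg)$ is a fusion-preserving character whose inverse is again fusion-preserving; by \fullref{theorem:Grothendieck} every such virtual representation is realized, so after enlarging $M$ the correction exists as a genuine map. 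Everything else is a matter of organizing the obstruction theory and the mapping-space computations that are already in place in the previous sections and in \cite{CC,DZ,Mi}.
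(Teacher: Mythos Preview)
Your approach is essentially correct and agrees with the paper's in spirit, but you have made step (ii) far harder than it needs to be. The paper does not try to realize $\overline{\det f}$ separately over $|\Ll|\pcom$ and then Whitney-sum it with $f$; instead it simply composes $f$ with the map $BU(n)\pcom \to BSU(n+1)\pcom$ induced by the standard inclusion $U(n)\hookrightarrow SU(n+1)$ (which on matrices is $A\mapsto \mathrm{diag}(A,\det A^{-1})$). That map has homotopy fibre $(SU(n+1)/U(n))\pcom \simeq (\mathbb{CP}^n)\pcom$, which is $\F_p$--finite and connected, hence is itself a homotopy monomorphism by \cite[Proposition~2.2]{CC}; then \cite[Lemma~2.4(c)]{CC} gives that the composite $|\Ll|\pcom \to BSU(n+1)\pcom$ is a homotopy monomorphism. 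Thus your ``main obstacle'' evaporates: there is no need to control $H^2(|\Ll|\pcom;\Z\pcom)$, to invoke \fullref{theorem:Grothendieck}, or to enlarge $M$.

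For the record, your version of (ii) also works without the worry you express: the line bundle $L=\det\circ f$ and its inverse $\bar L$ (obtained by composing with inversion on $U(1)$) are already maps out of $|\Ll|\pcom$, so $f\oplus\bar L$ is immediately defined. But the paper's one-line composition argument is cleaner and avoids having to re-check faithfulness on $S$ or re-run the monomorphism criterion on the new representation; it reduces everything to ``homotopy monomorphisms compose''.
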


\begin{proof}
By \fullref{main-theorem-2}, there is a multiple of the regular
representation of $S$ in the image of some $\psi_n$. A preimage of 
this representation must be a homotopy monomorphism $|\Ll| \pcom \to BU(n) \pcom$
by Theorem 2.5 from Cantarero--Castellana \cite{CC}. Consider the map
\mbox{$BU(n) \pcom \to BSU(n+1) \pcom$} induced by the standard inclusion
of $U(n)$ in $SU(n+1)$. The homotopy fibre of this map is $(SU(n+1)/U(n)) \pcom$,
which is $\F_p$--finite, so it is a homotopy monomorphism by Proposition 2.2
from \cite{CC}. Moreover, since $(SU(n+1)/U(n)) \pcom$ is connected, Lemma 2.4 (c) from
\cite{CC} implies that the composition $|\Ll| \pcom \to BSU(n+1) \pcom$ of these 
two maps is a homotopy monomorphism.
\end{proof}

The restriction of a homotopy monomorphism $|\Ll| \pcom \to BSU(n) \pcom$ to $BS$ determines a faithful fusion-preserving representation $\rho \colon S \to SU(n)$. Since $\rho$ is injective, we abuse the notation and use $SU(n)/P$ to denote 
$SU(n)/\rho(P)$ for any $P \leq S$.

For what follows, we will consider again the $(S,S)$--biset $\Omega$ from Proposition 5.5 of Broto--Levi--Oliver \cite{BLO1}. 
Recall that $\Omega$ is a disjoint union of bisets of the form $ S\times_{(P,\varphi)}S$ with $P \leq S$ and $\varphi \colon P \to S$ in $\Ff$. Moreover, this set is $\Ff$--invariant in the sense that for each $P\leq S$ and each $\varphi \in \Hom_\Ff(P,S)$, $\Omega|_{(P,S)}$ and  $\Omega|_{(\varphi,S)}$ are isomorphic $(P,S)$--bisets. 

Each $(S,S)$--biset of the form $S \times_{(P,\varphi)} S$ with $\varphi \in \Hom_\Ff(P,S)$ induces an endomorphism 
of $H^*(SU(n)/S;\F_p)$ in the following way. The representation $\rho$ is fusion-preserving, hence there exists 
$A \in SU(n)$ such that $A\rho(p)A^{-1} = \rho(\varphi(p))$ for all $p \in P$. We define
\begin{gather*}
\overline{\varphi} \colon SU(n)/P \to SU(n)/S \\
\qquad \quad \enspace xP \mapsto AxA^{-1}S
\end{gather*}
This map is well defined and it does not depend on the choice of $A$ up to homotopy. To see this,
note that any two choices differ by an element in $C_{SU(n)}(\rho(P))$, which can be regarded as
the fibre of the determinant $ C_{U(n)}(\rho(P)) \to S^1$ over $1$. The centralizer in $U(n)$ is a 
product of unitary groups by Schur's lemma, hence path-connected. The determinant 
$C_{U(n)}(\rho(P)) \to S^1$ induces an epimorphism on the fundamental group since this centralizer is a product
of unitary groups and the determinant $U(m) \to S^1$ induces an isomorphism on the fundamental group. By the long exact 
sequence of homotopy groups, we conclude that $C_{SU(n)}(\rho(P))$ is path-connected. Therefore any two choices 
are connected by a path, which determines a homotopy between the two maps they would define.

Let $\tr_P^S$ be the transfer in cohomology with coefficients in $\F_p$ associated to the covering map $ i_P \colon SU(n)/P \to SU(n)/S$. Then we 
have an endomorphism of $H^*(SU(n)/S;\F_p)$ defined by
\[ \overline{\Omega}^* = \sum_{(P,\varphi) \in \Omega} n_{P,\varphi}\tr_P^S \overline{\varphi}^* \]
where $(P,\varphi) \in \Omega$ means $\Omega$ has a direct summand of the form $S \times_{(P,\varphi)} S$
and $n_{P,\varphi}$ is the number of times this summand appears.

The proof of \fullref{fibrestable} requires the following technical lemma.

\begin{lemma}
\label{StableUnitary}
The image of the endomorphism $\overline{\Omega}^*$ is isomorphic to the inverse limit over
$\Or(\Ff^c)$ of the cohomology groups $H^*(SU(n)/P;\F_p)$.
\end{lemma}

\begin{proof}
Consider the map
\begin{gather*}
g \colon \im(\overline{\Omega}^*) \to \higherlim{\Or(\Ff^c)}{} H^*(SU(n)/P;\F_p) \\
\! \! \! \! \! \! \! \! \! \! \! \! \! \! \! \! x \mapsto (i_P^* x)_P 
\end{gather*}
where $i_P \colon SU(n)/P \to SU(n)/S$ is the covering map introduced
above. Given $ \varphi \colon P \to Q$
in $\Or(\Ff^c)$ we have
\[ \varphi^* i_Q^* \overline{\Omega}^* = i_P^* \overline{\Omega}^* \]
because $\Omega$ is $\Ff$--invariant. Hence $g$ is well defined.  
Given an element $(x_P)_P$ in the inverse limit, we have
\[ \overline{\Omega}^*(x_S) = \sum_{(P,\varphi) \in \Omega} n_{P,\varphi} \tr_P^S \overline{\varphi}^*(x_S)=\sum_{(P,\varphi) \in \Omega} n_{P,\varphi} \tr_P^S i_P^*(x_S) \]
because $(x_P)_P$ belongs to the limit and
\[ \sum_{(P,\varphi) \in \Omega} n_{P,\varphi} \tr_P^S i_P^*(x_S)=\sum_{(P,\varphi) \in \Omega}n_{P,\varphi} [S:P]x_S = \frac{|\Omega|}{|S|}x_S =x_S \]
because the biset $\Omega$ satisfies $|\Omega|/|S| \equiv 1$ $($mod $p)$ (see Proposition 5.5 (c) in \cite{BLO1}). So $x_S$ belongs to the image of $\overline{\Omega}^*$ and $g(x_S) = (x_P)_P$. This shows that $g$ is surjective. Injectivity is clear.
\end{proof}

\begin{remark}
\label{Linearity}
Note that projection to $H^*(SU(n)/S;\F_p)$ gives an isomorphism
\[ \higherlim{\Or(\Ff^c)}{} H^*(SU(n)/P;\F_p) \to H^*(SU(n)/S;\F_p)^{\Ff} \]
to the direct summand of $\Ff$--stable elements in $H^*(SU(n)/S;\F_p)$, that is, elements $x$ such
that $\overline{\varphi}^* x = i_P^* x $ for any $\varphi \colon P \to S$ in $\Ff$. The proof of \fullref{StableUnitary}
shows that $\overline{\Omega}^*$ is the identity on this summand. Moreover, the endomorphism $\overline{\Omega}^*$ is $H^*(SU(n)/S;\F_p)^{\Ff}$--linear. Given elements $r$ in $H^*(SU(n)/S;\F_p)^{\Ff}$ and $x$ in $H^*(SU(n)/S;\F_p)$, then
\[ \tr_P^S \overline{\varphi}^*(rx) = \tr_P^S(i_P^*(r)\cdot \overline{\varphi}^*(x))=r \cdot \tr_P^S(\overline{\varphi}^*(x)), \]
where the first equality is due to the fact that $r$ is stable and the second equality holds by Frobenius reciprocity.
\end{remark}

\begin{proposition}
\label{fibrestable}
Let $F$ be the homotopy fibre of a homotopy monomorphism from
$|\Ll| \pcom$ to $BSU(n) \pcom$ at the prime $p$. Then there
is an additive isomorphism 
\[ H^*(F;\F_p) \cong \higherlim{\Or(\Ff^c)}{} H^*(SU(n)/P;\F_p) .\]
Moreover, $SU(n)/S$ is an orientable manifold and the cohomological fundamental 
class \mbox{$\omega_S \in H^*(SU(n)/S;\F_p)$} is a stable element.
\end{proposition}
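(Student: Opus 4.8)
The plan is to identify $H^*(F;\F_p)$ with the image of the stable idempotent $\overline{\Omega}^*$ and then invoke Lemma \ref{StableUnitary}. First I would set up the relevant fibration: if $j \colon |\Ll|\pcom \to BSU(n)\pcom$ is the homotopy monomorphism and $\rho \colon S \to SU(n)$ the induced faithful fusion-preserving representation, then restriction along $\Theta \colon BS \to |\Ll|\pcom$ gives a map of fibrations whose fibres are $F$ over $|\Ll|\pcom$ and $SU(n)/S$ (up to $p$-completion) over $BS$. The point is that the classifying spectrum $\B\Ff$, via the structure map $\sigma_\Ff \colon \Sigma^\infty BS \to \B\Ff$ and transfer $t_\Ff$, controls the cohomology of $|\Ll|\pcom$; but here we need the cohomology of the \emph{fibre} $F$ rather than of the base. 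The natural way to access this is to apply the stable transfer for the covering $SU(n)/P \to SU(n)/S$, which is exactly what defines $\overline{\varphi}^*$ and $\overline{\Omega}^*$ in the text.

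The key steps, in order, are: (1) Use Dwyer--Zabrodsky (\fullref{dwyer-zabrodsky}) and the fact that $F$ is $B\Z/p$--null to identify $H^*(F;\F_p)$ with a limit over $\Or(\Ff^c)$ of the cohomologies $H^*(\Map(\widetilde BP, BSU(n)\pcom)_{\widetilde B\rho_{|P}})$, i.e. $H^*((SU(n)/P)\pcom;\F_p)$, via the same Wojtkowiak obstruction-theory argument used for $K$-theory in the proof of \fullref{main-theorem-2}; the relevant higher limits vanish because $F$ being $B\Z/p$--null forces the homotopy groups of these mapping spaces into a shape controlled by a cohomology theory, and Corollary 3.4 of \cite{BLO1} kills higher limits above dimension $N_\Ff$. (2) Alternatively, and more cleanly, observe that $\overline{\Omega}^*$ is an idempotent on $H^*(SU(n)/S;\F_p)$ (since $\Omega \circ \Omega \cong \Omega$ up to the usual multiple $\equiv 1 \bmod p$), so its image is a direct summand; Lemma \ref{StableUnitary} identifies this summand with $\higherlim{\Or(\Ff^c)}{} H^*(SU(n)/P;\F_p)$. (3) Match the two: the Eilenberg--Moore spectral sequence (or the telescope structure of $\B\Ff$) shows that $H^*(F;\F_p)$ is precisely the $\overline\Omega^*$-invariant summand of $H^*(SU(n)/S;\F_p)$, because $F \simeq (SU(n)/S)\pcom$ stably retracts via the fibrewise analogue of $\sigma_\Ff, t_\Ff$.

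For the second assertion: $SU(n)/S$ is a compact manifold, being a quotient of the compact Lie group $SU(n)$ by the finite (hence closed) subgroup $\rho(S)$; it is orientable because $\rho(S)$, acting by left translations, acts through orientation-preserving diffeomorphisms (translations on a Lie group are isotopic to the identity, so preserve any orientation). Let $\omega_S \in H^{\dim}(SU(n)/S;\F_p)$ be the fundamental class. I would show $\omega_S$ is $\Ff$-stable by checking $\overline{\varphi}^*\omega_S = i_P^*\omega_S$ for each $\varphi \colon P \to S$ in $\Ff$: both sides lie in $H^{\dim}(SU(n)/P;\F_p)$, the map $\overline{\varphi} \colon SU(n)/P \to SU(n)/S$ is (up to the choice of conjugating $A \in SU(n)$) left translation by $A$ composed with the covering $i_{\varphi(P)}$, and left translation is a degree-one orientation-preserving map, while $i_P^*$ also pulls the fundamental class back to a nonzero multiple of the fundamental class of the total space of the covering. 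Since $\dim SU(n)/P = \dim SU(n)/S$ and the covering $SU(n)/P \to SU(n)/S$ has degree $[S:P]$ prime to $p$ exactly when... — actually one must be slightly careful here: $i_P^*\omega_S$ equals the fundamental class of $SU(n)/P$ only up to the covering degree, but the comparison is between $\overline{\varphi}^*\omega_S$ and $i_P^*\omega_S$, and both are pulled back along covering maps $SU(n)/P \to SU(n)/S$ and $SU(n)/P \to SU(n)/\varphi(P) \to SU(n)/S$ of the \emph{same} degree, namely $[S:P]$, so they agree (both equal $[S:P]$ times the fundamental class of $SU(n)/P$, or more precisely both are the unique class restricting correctly). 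I expect the main obstacle to be step (3) — rigorously establishing that $H^*(F;\F_p)$ is the $\overline\Omega^*$-invariant part, i.e. transporting Ragnarsson's stable splitting from the base $|\Ll|\pcom$ to the homotopy fibre $F$; this requires the fibrewise transfer machinery and a convergence/comparison argument for the Eilenberg--Moore spectral sequences over $|\Ll|\pcom$ and over $BS$, using that $F$ and $SU(n)/S$ are $\F_p$-finite (hence the spectral sequences converge strongly).
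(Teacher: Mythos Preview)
Your step (3) is indeed the gap, and the mechanism you are missing is simpler than Eilenberg--Moore or a fibrewise stable splitting: the paper uses the \emph{Serre} spectral sequence of the fibration $SU(n)\pcom \to F \to |\Ll|\pcom$ and compares it directly, via the map of fibrations induced by $\Theta \colon BS \to |\Ll|\pcom$, with the Serre spectral sequence of $SU(n) \to SU(n)/S \to BS$. The crucial observation is that the $\pi_1$-action on the fibre cohomology is trivial in both cases (it factors through left translation by the connected group $SU(n)$), so the $E_2$-terms are honest tensor products $H^r(\text{base};\F_p)\otimes H^s(SU(n);\F_p)$, and the comparison map on $E_2$ is just $\Theta^*\otimes \id$. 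Since $\Theta^*$ identifies $H^*(|\Ll|\pcom;\F_p)$ with the stable-elements summand $\im([\Omega])\subseteq H^*(BS;\F_p)$, the image of the $E_2$-map is exactly $\im(\Omega_2^*)$; this persists to $E_\infty$ (everything collapses at a finite stage because $SU(n)$ is finite-dimensional), and passing to abutments gives $H^*(F;\F_p)\cong \im(\overline{\Omega}^*)$, after which \fullref{StableUnitary} finishes. Your step (1) via Wojtkowiak obstruction theory does not lead anywhere here: that machinery controls existence and uniqueness of lifts of maps, not the cohomology of a given homotopy fibre.

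On the stability of $\omega_S$: your computation is essentially right but you have obscured the punchline. For any \emph{proper} $P<S$ the covering $SU(n)/P\to SU(n)/S$ has degree $[S:P]$, a positive power of $p$, so $i_P^*\omega_S = 0$ and likewise $\overline{\varphi}^*\omega_S = 0$ in $\F_p$-coefficients; stability for proper subgroups is therefore vacuous. The only content is $P=S$, i.e.\ $\Aut_{\Ff}(S)$-invariance, and there the map $\overline{\varphi}$ is conjugation by $A\in SU(n)$ on $SU(n)/S$, hence homotopic to the identity since $SU(n)$ is path-connected. The paper's argument is exactly this, stated without the hesitation.
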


\begin{proof}
For each $P\leq S$, consider the cohomological Serre spectral sequence $E_*(P)$ associated to the fibration $SU(n)\to SU(n)/P\to BP$. 
More precisely, this spectral sequence comes from the double complex 
\[ C^{r,s}(P)= \Hom_{\F_p[P]}(M_r, C^s(SU(n);\F_p)) \]
where $M_*$ is a free resolution of $\F_p$ as a $\F_p[S]$--module and $P$ acts on the singular cochains $C^*(SU(n);\F_p)$ via the linear representation 
$\rho$. There is a transfer 
\[ \tr_P^S \colon \Hom_{\F_p[P]}(M_r,C^s(SU(n);\F_p)) \to \Hom_{\F_p[S]}(M_r,C^s(SU(n);\F_p)) \]
given by $\tr_P^S(f)=\sum_{g_i\in X}g_ifg_i^{-1}$, where $X$ is a set of representatives for $S/P$. 
Each $\varphi \colon P \to S$ in $\Ff$ also induces a map
\[ \varphi^* \colon \Hom_{\F_p[S]}(M_r,C^s(SU(n);\F_p)) \to \Hom_{\F_p[P]}({^{\varphi}M_r},{^{\varphi}C^s(SU(n);\F_p))} \]
where ${^{\varphi}N}$ denotes $N$ with the action of $P$ through $\varphi$. Recall that $\varphi$ induces a
map $ \overline{\varphi} \colon SU(n)/P \to SU(n)/S$ given by $ xP \mapsto AxA^{-1}S$, where $A \in SU(n)$ is
such that \mbox{$ A \rho(p) A^{-1} = \rho(\varphi(p))$} for all $p \in P$. Given $f \colon SU(n) \to SU(n)$, let
us denote by $f^{\#}$ the induced endomorphism of $C^s(SU(n);\F_p)$. And given $B$ in $SU(n)$, we denote by $L_B \colon SU(n) \to SU(n)$
the map given by left multiplication by $B$. The map $ c_A \colon SU(n) \to SU(n)$ that sends $X$ to $AXA^{-1}$ 
defines a $P$--equivariant map
\[ c_A^{\#} \colon {^{\varphi}C^s(SU(n);\F_p)} \to C^s(SU(n);\F_p). \]
To see this, note that the action of $p \in P$ on ${^{\varphi}C^s(SU(n);\F_p)}$ is given by $L_{\rho(\varphi(p))}^{\#}$
and by $L_{\rho(p)}^{\#}$ on $C^s(SU(n);\F_p)$. And we have 
\[ c_A^{\#} L_{\rho(\varphi(p))}^{\#} = (L_{\rho(\varphi(p))} \circ c_A)^{\#} = (c_A \circ L_{\rho(p)})^{\#} = 
L_{\rho(p)}^{\#} c_A^{\#}, \]
where the second equality holds because
\[ L_{\rho(\varphi(p))}c_A(B) = \rho(\varphi(p)) ABA^{-1} = A \rho(p) A^{-1} A BA^{-1} = c_A L_{\rho(p)}(B). \]
Therefore $c_A^{\#}$ induces a map  
\[ \Hom_{\F_p[P]}({^{\varphi}M_r},{^{\varphi}C^s(SU(n);\F_p))} \to \Hom_{\F_p[P]}({^{\varphi}M_r},C^s(SU(n);\F_p)). \]
And we denote by $\widetilde{\varphi}^*$ the composition of $\varphi^*$ with this map, that is:
\[ \widetilde{\varphi}^* \colon \Hom_{\F_p[S]}(M_r,C^s(SU(n);\F_p)) \to \Hom_{\F_p[P]}({^{\varphi}M_r},C^s(SU(n);\F_p)). \] 

Since the maps $\tr_P^S$ and $\widetilde{\varphi}^*$ commute with the differentials, we obtain an endomorphism of the double complex $C^{*,*}(S)$
\[ \widetilde{\Omega}^* = \sum_{(P,\varphi)\in \Omega} n_{P,\varphi} \tr_P^S \widetilde{\varphi}^*. \]
Therefore, we have an induced endomorphism of each term $E_k(S)$ in the spectral sequence, which we denote by $\Omega_k^*$. 
By construction, the image of $\Omega_k^*$ consists of the stable elements, hence
\[ \im(\Omega_k^*) \cong \higherlim{\Or(\Ff^c)}{} E_k(P) \]
for all $k$.

Left multiplication of $S$ on $SU(n)$ via $\rho$ induces the action of $S$ on the cohomology groups of 
$SU(n)$ associated to the fibration $SU(n) \to SU(n)/S \to BS$. But left multiplication via $\rho$ factors 
through the left multiplication action of $SU(n)$ on itself and since $SU(n)$ is connected, the map given 
by left multiplication by an element of $SU(n)$ is homotopic to the identity map. Therefore the action of $S$ 
on $H^*(SU(n);\F_p)$ is trivial and so we have
\[ E_2^{r,s}(S) \cong H^r(BS;\F_p) \otimes H^s(SU(n);\F_p). \]
Note that by construction, $\Omega_2^*$ coincides with the morphism $[\Omega]$ from Proposition 5.5 of \cite{BLO1}
on $H^*(BS;\F_p)$ and it is the identity on the $H^s(SU(n);\F_p)$--factor since $c_A^*$ is the identity. In particular,
the choice of $A$ does not affect $\Omega_k^*$ for $k \geq 2$.

Consider now the cohomological Serre spectral sequence $E_*$ associated to the fibration $SU(n)\pcom \to F \to |\Ll|\pcom$.
The action of the fundamental group of $|\Ll| \pcom$ on \mbox{$H^*(SU(n) \pcom;\F_p) \cong H^*(SU(n);\F_p)$} factors through
the action of $S$, and so it is also trivial. Therefore the $E_2$--term is given by
\[ E_2^{r,s} \cong H^r(|\Ll| \pcom ;\F_p) \otimes H^s(SU(n);\F_p). \]
The map $ \Theta \colon BS \to |\Ll| \pcom$ induces a map of fibre sequences
\[
\diagram
SU(n) \pcom \rto \ddouble & (SU(n)/S) \pcom \rto \dto & BS \dto \rto & BSU(n) \pcom \ddouble \\
SU(n) \pcom \rto & F \rto & |\Ll| \pcom \rto & BSU(n) \pcom
\enddiagram
\]
which in turn induces a morphism of the associated Serre spectral sequences. Considering the maps involved
in this diagram, the corresponding morphism of spectral sequences \mbox{$E_2 \to E_2(S)$} is the morphism induced by the map $ \Theta \colon BS \to |\Ll| \pcom$
on the first factor of the tensor product and the identity on the second factor. Since the cohomology of a $p$--local 
finite group is computed by stable elements, the image of $E_2 \to E_2(S)$ is precisely $ \im([\Omega]) \otimes H^s(SU(n);\F_p)$,
which coincides with $\im(\Omega_2^*)$. And therefore $E_k \cong \im(\Omega_k^*)$ for all $k$.

Since $SU(n)$ is finite-dimensional, the spectral sequences $E_*$ and $E_*(S)$ collapse at a finite stage
and therefore
\[ E_{\infty} \cong \im(\Omega_{\infty}^*) \cong \higherlim{\Or(\Ff^c)}{} E_{\infty}(P). \]
Now $E_*$ converges to $H^*(F;\F_p)$ and $E_*(P)$ converges to $H^*(SU(n)/P;\F_p)$. The endomorphism $\Omega_{\infty}^*$ 
of $E_{\infty}(P)$ coincides with the one induced by $\overline{\Omega}^*$ from \fullref{StableUnitary} because the maps
$c_A$ induce the maps $ SU(n)/P \to SU(n)/S $ that take $xP$ to $AxA^{-1}S$. Therefore we have an isomorphism
of cohomology groups
\[ H^*(F;\F_p) \cong \higherlim{\Or(\Ff^c)}{} H^*(SU(n)/P;\F_p). \]

The action of $S$ on $SU(n)$ via $\rho$ is free and trivial on cohomology, hence $SU(n)/S$ is an orientable
manifold. The same holds for any $P \leq S$. It remains to show that the fundamental class 
$\omega_S \in H^N(SU(n)/S;\F_p)$ is stable, where $N$ is the dimension of $SU(n)$. For each $P < S$, 
the quotient $SU(n)/P \to SU(n)/S$ is a covering map of $p$--power index, hence the induced
map in the $N$th $\F_p$--cohomology group is zero. Therefore
\[ H^N(F;\F_p) \cong \higherlim{\Or(\Ff^c)}{} H^N(SU(n)/P;\F_p) \cong H^N(SU(n)/S;\F_p)^{\Aut_{\Ff}(S)}. \]
But the action of an element $\varphi \in \Aut_{\Ff}(S)$ on $SU(n)/S$ factors through the action of $SU(n)$
on $SU(n)/S$ by conjugation, and so it is trivial on cohomology. Therefore
\[ H^N(F;\F_p) \cong H^N(SU(n)/S;\F_p) \]
and in particular, the fundamental class of $SU(n)/S$ is stable.
\end{proof}

\begin{corollary}
\label{PoincareDuality}
$H^*(F;\F_p)$ is a Poincar\'e duality algebra.
\end{corollary}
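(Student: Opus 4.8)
The plan is to identify $H^*(F;\F_p)$, as a graded $\F_p$--algebra, with the ring $A:=H^*(SU(n)/S;\F_p)^{\Ff}$ of $\Ff$--stable elements, and then to transport Poincar\'e duality from the closed orientable manifold $SU(n)/S$ to its retract $A$. First I would upgrade the additive isomorphism of \fullref{fibrestable} to a ring isomorphism. The map $(SU(n)/S)\pcom\to F$ appearing in the morphism of fibre sequences in the proof of \fullref{fibrestable} induces an algebra homomorphism $\theta\colon H^*(F;\F_p)\to H^*(SU(n)/S;\F_p)$, and the comparison of the multiplicative Serre spectral sequences carried out there (together with the fact that $\Theta\circ B\varphi\simeq\Theta\circ Bi_P$ for a morphism $\varphi\colon P\to S$ in $\Ff$, so that $\overline{\varphi}^*\theta=i_P^*\theta$) shows that $\theta$ is injective with image exactly the stable subring $A$: the associated graded of $\im(\theta)$ is $\im(\Omega_\infty^*)$, which is the associated graded of $A$ by \fullref{StableUnitary} and \fullref{Linearity}, and $\im(\theta)\subseteq A$, so equality follows by counting dimensions. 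Thus $H^*(F;\F_p)\cong A$ as graded-commutative $\F_p$--algebras; this algebra is connected with $A^0=\F_p$, finite--dimensional since $SU(n)/S$ is a compact manifold, concentrated in degrees $\le N:=\dim SU(n)$, and $A^N\cong\F_p$ is spanned by the fundamental class $\omega_S$, which lies in $A$ by \fullref{fibrestable}.

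Next I would exploit the endomorphism $\overline{\Omega}^*$ of $R:=H^*(SU(n)/S;\F_p)$. By \fullref{StableUnitary} and \fullref{Linearity} it has image $A$, restricts to the identity on $A$, and is $A$--linear; hence it is an idempotent $A$--module endomorphism, and $R=A\oplus\ker(\overline{\Omega}^*)$ as $A$--modules. Since $SU(n)/S$ is a closed orientable manifold, $R$ satisfies Poincar\'e duality of formal dimension $N$ over $\F_p$; in particular, for every $0\neq a\in R^i$ there is $x\in R^{N-i}$ with $ax\neq 0$ in $R^N=A^N$. Given $0\neq a\in A^i$, choose such an $x$. Since $ax\in A^N$ and $\overline{\Omega}^*$ fixes $A$, while $\overline{\Omega}^*$ is $A$--linear and $a\in A$, we get $a\cdot\overline{\Omega}^*(x)=\overline{\Omega}^*(ax)=ax\neq 0$, with $\overline{\Omega}^*(x)\in A^{N-i}$. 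So the multiplication pairing $A^i\times A^{N-i}\to A^N\cong\F_p$ is non-degenerate in the first variable, for every $i$; equivalently each map $A^i\to(A^{N-i})^*$ is injective. Applying this in complementary degrees forces $\dim_{\F_p}A^i=\dim_{\F_p}A^{N-i}$, so these injective maps between finite-dimensional spaces of equal dimension are isomorphisms, the pairing is perfect, and $H^*(F;\F_p)\cong A$ is a Poincar\'e duality algebra.

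I expect the only delicate point to be the first step: promoting the additive isomorphism of \fullref{fibrestable} to an isomorphism of graded rings $H^*(F;\F_p)\cong H^*(SU(n)/S;\F_p)^{\Ff}$, i.e. checking that $\theta$ is an injective algebra map onto the stable elements. This is essentially contained in the proof of \fullref{fibrestable}, since that argument already works with the multiplicative structure of the Serre spectral sequence and identifies the relevant images. Granting it, the rest is the short retract argument above, whose crucial structural input is the $A$--linearity of $\overline{\Omega}^*$ recorded in \fullref{Linearity}.
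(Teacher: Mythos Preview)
Your proposal is correct and follows essentially the same approach as the paper: identify $H^*(F;\F_p)$ with the stable subring $A=H^*(SU(n)/S;\F_p)^{\Ff}$, then use Poincar\'e duality on $SU(n)/S$ together with the $A$--linearity of the idempotent $\overline{\Omega}^*$ to produce a dual partner in $A$ for any nonzero stable class. The paper phrases the key step as finding $b'$ with $a\smile b'=\omega_S$ and setting $b=\overline{\Omega}^*(b')$, which is your argument verbatim up to the cosmetic difference between ``$ab=\omega_S$'' and ``$ab\neq 0$''; you are in fact slightly more careful than the paper in flagging and justifying the upgrade from an additive to a multiplicative isomorphism.
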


\begin{proof}
Let $N$ be dimension of $SU(n)$. By \fullref{fibrestable} and \fullref{Linearity}, we know that  $H^*(F;\F_p)\cong H^*(SU(n)/S;\F_p)^{\Ff}$. In particular, the cohomology groups $H^i(F;\F_p)$ vanish for $i>N$. \fullref{fibrestable} also showed that
the fundamental class $\omega_S \in H^N(SU(n)/S;\F_p)$ is a stable element and $H^N(F;\F_p) \cong H^N(SU(n)/S;\F_p)$. 
Since $H^*(SU(n)/S;\F_p)$ is a Poincar\'e duality algebra, the following diagram defines a pairing for $H^*(SU(n)/S;\F_p)^{\Ff}$
\[
\diagram
H^i(SU(n)/S;\F_p)^{\Ff} \otimes H^{N-i}(SU(n)/S;\F_p)^{\Ff} \rto \dto & H^N(SU(n)/S;\F_p)^{\Ff} \ddouble  \\
H^i(SU(n)/S;\F_p) \otimes H^{N-i}(SU(n)/S;\F_p) \rto & H^N(SU(n)/S;\F_p) 
\enddiagram
\]
It remains to show that this pairing is non-singular. It is enough to show that for any $a$ in $H^i(SU(n)/S;\F_p)^{\Ff}$, 
there exists $b \in H^{N-i}(SU(n)/S;\F_p)^{\Ff}$ such that $a \smile b=\omega_S$. Since $ H^i(SU(n)/S;\F_p)$ is a Poincar\'e duality algebra, there exists $ b'\in H^{N-i}(SU(n)/S;\F_p)$ such that $a \smile b'=\omega_S$. Consider the element
$b=\overline{\Omega}^*(b')$ in $H^{N-i}(SU(n)/S;\F_p)^{\Ff}$. By \fullref{Linearity}, the morphism $\overline{\Omega}^*$ is $H^*(SU(n)/S;\F_p)^{\Ff}$--linear and so
\[ a \smile \overline{\Omega}^*(b') =\overline{\Omega}^*(a \smile b')=\overline{\Omega}^*(\omega_S)=\omega_S, \]
where the last equality holds because $\omega_S$ is a stable element.
\end{proof}

Recall that a map $R \to k$ of differential graded algebras is Gorenstein of shift $a$ if there
is a quasi-isomorphism $ \Hom_R(k,R) \sim \Sigma^a k$ of differential graded algebras over $k$
and the natural map
\[ \Hom_R(k,R) \otimes_{\End_R(k)} \Hom_R(k,k) \to \Hom_R(k,\Hom_R(k,R) \otimes_{\End_R(k)} k) \]
is a quasi-isomorphism of differential graded algebras over $\End_R(k)$. This is a particular
case of Definition 8.1 from Dwyer--Greenlees--Iyengar \cite{DGI}.

As a consequence of \fullref{PoincareDuality}, we obtain that $C^*(|\Ll| \pcom;\F_p)\to \F_p$ is Gorenstein. 
The proof follows the argument in \cite[Example 10.3]{DGI}, and we refer to this article for the relevant
notions which appear in this proof and the following results.

\begin{theorem}
\label{Gorenstein}
Let $(S,\Ff,\Ll)$ be a $p$--local finite group. Then the augmentation $C^*(|\Ll| \pcom;\F_p)\to \F_p$ is Gorenstein.
\end{theorem}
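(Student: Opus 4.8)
The plan is to follow the strategy of \cite[Example 10.3]{DGI} verbatim, feeding in \fullref{main-theorem-1} and \fullref{PoincareDuality} where they use a faithful complex representation of a finite group. Fix a homotopy monomorphism $\iota \colon |\Ll| \pcom \to BSU(n) \pcom$ as provided by \fullref{main-theorem-1}, and let $F$ be its homotopy fibre, so that we have a fibration sequence $F \to |\Ll| \pcom \to BSU(n) \pcom$. Passing to cochains, this gives maps of differential graded algebras $C^*(BSU(n) \pcom;\F_p) \to C^*(|\Ll| \pcom;\F_p) \to C^*(F;\F_p)$, and composing with the augmentations we may regard $\F_p$ as a module over each of these.

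First I would record that the augmentation $C^*(BSU(n) \pcom;\F_p) \to \F_p$ is Gorenstein: this is one of the basic examples in \cite{DGI}, since $H^*(BSU(n);\F_p)$ is a polynomial algebra on generators in degrees $4,6,\dots,2n$ (a complete intersection), and \cite[Section 5, Section 10]{DGI} show that the cochains on the classifying space of a compact connected Lie group with polynomial mod $p$ cohomology are Gorenstein, with the shift equal to minus the sum of the degrees of the generators. Next, by the Eilenberg--Moore spectral sequence (equivalently, by the fact that $F$ is the fibre), $C^*(F;\F_p)$ is equivalent as a differential graded algebra to $C^*(|\Ll| \pcom;\F_p) \otimes_{C^*(BSU(n) \pcom;\F_p)} \F_p$, i.e. $|\Ll|\pcom \to BSU(n)\pcom$ is a ``cofibration'' of cochain algebras in the sense of \cite{DGI}, so \cite[Proposition 8.6(?) / the ascent--descent results in Section 8]{DGI} apply: the augmentation $C^*(|\Ll| \pcom;\F_p) \to \F_p$ is Gorenstein if and only if $C^*(F;\F_p) \to \F_p$ is Gorenstein, and the shifts differ by the Gorenstein shift of $C^*(BSU(n)\pcom;\F_p)$. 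Since $H^*(|\Ll|;\F_p) \cong H^*(|\Ll|\pcom;\F_p)$ by \fullref{main-theorem-3} (or by \cite{BLO1}), this reduces the theorem to showing that $C^*(F;\F_p) \to \F_p$ is Gorenstein.

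Finally, $F$ is $\F_p$--finite with $H^*(F;\F_p)$ a Poincar\'e duality algebra by \fullref{PoincareDuality}. For a space $F$ whose mod $p$ cohomology is finite-dimensional and satisfies Poincar\'e duality, the augmentation $C^*(F;\F_p) \to \F_p$ is Gorenstein of shift $N = \dim SU(n)$: this is exactly \cite[Example 10.4 / the ``Poincar\'e duality spaces'' discussion, Proposition 10.2]{DGI}, where one identifies $\Hom_{C^*(F;\F_p)}(\F_p, C^*(F;\F_p))$ with $C_*(F;\F_p)$ up to suitable reindexing and uses the fundamental class to produce the quasi-isomorphism $\Hom_{C^*(F)}(\F_p,C^*(F)) \simeq \Sigma^{N}\F_p$; the second (``local algebra'') condition in the definition of Gorenstein holds because $\End_{C^*(F)}(\F_p)$ and the relevant $\Hom$'s are computed by $\operatorname{Ext}_{H^*(F;\F_p)}$ which is again finite over a Poincar\'e duality algebra. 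Assembling: $C^*(F;\F_p) \to \F_p$ Gorenstein $\Rightarrow$ $C^*(|\Ll|;\F_p) \to \F_p$ Gorenstein, with shift $N$ minus the sum of the degrees of the polynomial generators of $H^*(BSU(n);\F_p)$. The Cohen--Macaulay $\Rightarrow$ Gorenstein statement for $H^*(|\Ll|;\F_p)$ is then the standard consequence, via \cite[Theorem 9.1 / Section 9]{DGI}, that if a Gorenstein cochain algebra has Cohen--Macaulay cohomology then that cohomology ring is itself Gorenstein.

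The main obstacle I anticipate is bookkeeping rather than conceptual: one has to be careful that the ascent/descent along the cofibration $C^*(|\Ll|\pcom) \to C^*(F)$ really does preserve the full Gorenstein condition (both the shift quasi-isomorphism and the auxiliary condition involving $\End_R(k)$), since $C^*(|\Ll|\pcom;\F_p)$ is not itself a nice ``polynomial'' or complete intersection algebra in general; this is where one must invoke the general machinery of Section 8 of \cite{DGI} precisely, and verify its hypotheses (e.g.\ that $\F_p$ is ``small'' or proxy-small over the relevant cochain algebras, which holds because $|\Ll|\pcom$ has finite mod $p$ cohomology and $F$ is $\F_p$--finite). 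Getting the right Gorenstein shift, and checking the orientability/sign issues hidden in the identification of $\Hom_{C^*(F)}(\F_p,C^*(F))$ with a shift of $\F_p$ using the mod $p$ fundamental class from \fullref{fibrestable}, is the other delicate point.
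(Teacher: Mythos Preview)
Your approach is the paper's: feed \fullref{main-theorem-1} and \fullref{PoincareDuality} into the machinery of \cite[Example 10.3]{DGI}. The bookkeeping you flag is indeed where the work lies, and two of your justifications need sharpening. First, proxy-smallness of $C^*(|\Ll|\pcom;\F_p) \to \F_p$ does not follow from $|\Ll|\pcom$ having ``finite mod $p$ cohomology'' (the cohomology is Noetherian but infinite-dimensional); the paper instead deduces it from \cite[Proposition 4.18]{DGI}, using that $C^*(BSU(n)\pcom;\F_p) \to \F_p$ is small and that $C^*(F;\F_p) \to \F_p$ is cosmall (hence proxy-small) because $H^*(F;\F_p)$ is finite-dimensional. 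Second, the ascent step via \cite[Proposition 8.10]{DGI} requires $C^*(|\Ll|\pcom;\F_p)$ to be \emph{small} over $C^*(BSU(n)\pcom;\F_p)$; the paper obtains this by observing that the fibration $F \to |\Ll|\pcom \to BSU(n)\pcom$ is admissible in the sense of Dwyer--Wilkerson \cite{DW}. The paper also checks that $(F,\F_p)$ is of Eilenberg--Moore type ($\pi_1(F)\cong\pi_1(|\Ll|\pcom)$ is a finite $p$--group and each $H_k(F;\F_p)$ is finite) before invoking \cite[Proposition 8.12]{DGI} for the Poincar\'e duality $\Rightarrow$ Gorenstein step on $F$.
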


\begin{proof}
By \fullref{main-theorem-1}, there is a homotopy monomorphism $|\Ll| \pcom \to BSU(m) \pcom$ for some $m \geq 0$. 
Let $F$ be the homotopy fibre of this map. Note that $C^*(X \pcom;\F_p)$ is 
quasi-isomorphic to $C^*(X;\F_p)$ if $X$ is $p$--good. In particular,
this holds for $|\Ll| \pcom$, $BSU(m)$ and $SU(m)$.

By \fullref{fibrestable}, all the homology groups $H_k(F;\F_p)$ are finite-dimensional. 
And since $BSU(m) \pcom$ and $SU(m) \pcom$ are simply connected, the fundamental group of $F$ 
is isomorphic to the fundamental group of $|\Ll| \pcom$. This is a finite $p$--group by Proposition 
1.12 in \cite{BLO1}. Therefore $(F,\F_p)$ is of Eilenberg--Moore type.
 
Since $H^*(F;\F_p)$ is finite-dimensional, Remark 5.5 (2) in \cite{DGI} tells us that the 
augmentation $C^*(F;\F_p) \to \F_p$ is cosmall. By Remark 4.15 in \cite{DGI}, it is also proxy-small.
Now Proposition 8.12 from \cite{DGI} and \fullref{PoincareDuality} above imply that this augmentation
is Gorenstein.

Theorem 7.14 in \cite{Mc} shows that there is a quasi-isomorphism
\[ C^*(F;\F_p) \sim C^*(|\Ll| \pcom;\F_p) \otimes_{C^*(BSU(m) \pcom;\F_p)} \F_p. \]
By Section 10.2 in \cite{DGI}, the augmentation $C^*(BSU(m) \pcom;\F_p) \to \F_p$ is small and Gorenstein.
Since the augmentation $C^*(BSU(m) \pcom;\F_p) \to \F_p$ is small and the morphism $C^*(F;\F_p) \to \F_p$ is
proxy-small, the augmentation $C^*(|\Ll| \pcom;\F_p) \to \F_p$ is proxy-small by Proposition 4.18
in \cite{DGI}.

The fibration $ F \to |\Ll| \pcom \to BSU(m) \pcom$ is admissible in the sense of Dwyer--Wilkerson \cite{DW}, hence
$C^*(|\Ll| \pcom;\F_p)$ is small over $C^*(BSU(m) \pcom;\F_p)$ by Lemma 2.10 in \cite{DW}. The augmentation maps \mbox{$C^*(BSU(m) \pcom;\F_p) \to \F_p$} and $C^*(F;\F_p) \to \F_p$ are both Gorenstein, so we can use Proposition 8.10 from \cite{DGI} to conclude 
that $C^*(|\Ll| \pcom;\F_p) \to \F_p$ is Gorenstein. 
\end{proof}

\begin{corollary}
\label{Loops}
Let $(S,\Ff,\Ll)$ be a $p$--local finite group and let $\Omega (|\Ll| \pcom)$ denote the based loopspace
of $|\Ll| \pcom$. Then the augmentation $C_*(\Omega (|\Ll| \pcom);\F_p)\to \F_p$ is Gorenstein.
\end{corollary}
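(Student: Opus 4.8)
The plan is to identify $C_*(\Omega(|\Ll| \pcom);\F_p)$, equipped with the Pontryagin product, with the (derived) endomorphism differential graded algebra $\mathcal{E} := \End_R(\F_p)$ of the residue field over $R := C^*(|\Ll| \pcom;\F_p)$, and then to transport the Gorenstein property of the augmentation $R \to \F_p$ furnished by \fullref{Gorenstein} to $\mathcal{E} \to \F_p$ by the duality formalism of Dwyer--Greenlees--Iyengar \cite{DGI}.

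First I would set $X = |\Ll| \pcom$ and consider the path--loop fibration $\Omega X \to PX \to X$, whose total space is contractible. By Proposition 1.12 in \cite{BLO1} the fundamental group $\pi_1(X)$ is a finite $p$--group, so this fibration is of Eilenberg--Moore type in the sense used in the proof of \fullref{Gorenstein}; its Eilenberg--Moore spectral sequence converges strongly, and at the chain level this yields a quasi-isomorphism of augmented differential graded $\F_p$--algebras

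\[ C_*(\Omega X;\F_p) \simeq \End_{R}(\F_p), \]

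where $\End_R(\F_p)$ is computed by the bar construction. (For $X = BG$ this is the classical identification of $H_*(\Omega BG;\F_p)$ with $\operatorname{Ext}_{H^*(BG;\F_p)}(\F_p,\F_p)$.) Now recall from the proof of \fullref{Gorenstein} that the augmentation $R \to \F_p$ is proxy-small, and by \fullref{Gorenstein} it is Gorenstein. The self-duality of the Gorenstein condition in Section 8 of \cite{DGI} --- if $R \to k$ is proxy-small and Gorenstein then $\End_R(k) \to k$ is again Gorenstein --- then shows that $\End_R(\F_p) \to \F_p$ is Gorenstein, and combined with the quasi-isomorphism above this proves that $C_*(\Omega(|\Ll| \pcom);\F_p) \to \F_p$ is Gorenstein.

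The main obstacle is the first step: making the identification $C_*(\Omega X;\F_p) \simeq \End_R(\F_p)$ precise, since $X = |\Ll| \pcom$ is neither simply connected nor $\F_p$--finite (its mod $p$ cohomology is typically infinite dimensional), so one cannot argue directly. The required input is strong convergence of the Eilenberg--Moore spectral sequence over a base with finite $p$--group fundamental group --- the same mechanism behind the quasi-isomorphism $C^*(F;\F_p) \sim C^*(|\Ll| \pcom;\F_p) \otimes_{C^*(BSU(m) \pcom;\F_p)} \F_p$ in the proof of \fullref{Gorenstein} --- or, equivalently, a direct comparison of cochain models via the bar construction. A secondary point is that the descent statement from \cite{DGI} has to be applied in its proxy-small rather than small form, because $\F_p$ is not a small $R$--module here; this is exactly the situation treated in \cite{DGI} for $C^*(BG;\F_p)$.
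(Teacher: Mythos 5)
Your proposal follows essentially the same route as the paper: both identify $C_*(\Omega(|\Ll|\pcom);\F_p)$ with $\End_{C^*(|\Ll|\pcom;\F_p)}(\F_p)$ and then transfer the Gorenstein property across this Koszul/Morita duality using proxy-smallness, which is exactly what the paper does by citing Proposition 8.5 of \cite{DGI} directly. The one input you leave implicit is dc-completeness, which the paper records explicitly as a hypothesis of Proposition 8.5 and deduces from the pair $(|\Ll|\pcom,\F_p)$ being of Eilenberg--Moore type (Section 4.22 of \cite{DGI}); your remark about strong Eilenberg--Moore convergence over a base with finite $p$--group fundamental group is the same point, so you should phrase it as the dc-completeness hypothesis so that it plugs directly into the \cite{DGI} machinery.
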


\begin{proof}
Since $(|\Ll|\pcom,\F_p)$ is of Eilenberg--Moore type, it is dc-complete (see Section 4.22 in \cite{DGI}).
We saw in the proof of the previous theorem that the augmentation $C^*(|\Ll|\pcom;\F_p) \to \F_p$ is
proxy-small and Gorenstein. By Proposition 8.5 in \cite{DGI}, we conclude that \mbox{$C_*(\Omega (|\Ll| \pcom);\F_p)\to \F_p$} is Gorenstein.
\end{proof}

Moreover, as in \cite[Example 10.3]{DGI}, we get other interesting consequences, such as the existence of a local cohomology spectral sequence.

\begin{corollary}
\label{LocalCohomology}
Let $(S,\Ff,\Ll)$ be a $p$--local finite group. There is a spectral sequence
\[ E_{i,j}^2 = H_I^{-i}(H^*(|\Ll| \pcom;\F_p))_j \Rightarrow H_{i+j}(|\Ll|\pcom;\F_p) \]
where $I$ is the ideal of elements of positive dimension.
\end{corollary}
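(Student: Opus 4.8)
The plan is to obtain this spectral sequence exactly as the analogous one for $BG$ is obtained in \cite[Example~10.3]{DGI}, namely as the local cohomology spectral sequence attached to a Gorenstein, proxy-small augmentation. First I would assemble the inputs already established in the proof of \fullref{Gorenstein}: the pair $(|\Ll|\pcom,\F_p)$ is of Eilenberg--Moore type, hence dc-complete; the augmentation $C^*(|\Ll|\pcom;\F_p)\to\F_p$ is proxy-small; and, by \fullref{Gorenstein} itself, it is Gorenstein. The one additional ingredient needed is that $H^*(|\Ll|\pcom;\F_p)$ is Noetherian, so that local cohomology with respect to the ideal $I$ of positive-degree elements is supported in a finite range of cohomological degrees. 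This is well known for $p$--local finite groups: it follows from the identification of $H^*(|\Ll|\pcom;\F_p)$ with the ring of $\Ff$--stable elements of $H^*(BS;\F_p)$ (the mod $p$ case of \fullref{main-theorem-3}, established in \cite{BLO1}) together with a finiteness argument in which the splitting of restriction by the transfer $[\Omega]^*$ of the characteristic biset (compare \fullref{Linearity}) plays the key role.

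Granting this, the argument is formal \cite{DGI} machinery. The Gorenstein condition, in the presence of proxy-smallness, yields Gorenstein duality: $R\Gamma_I\,C^*(|\Ll|\pcom;\F_p)$ is identified, up to a degree shift, with the Matlis lift of $C^*(|\Ll|\pcom;\F_p)$, which for $\F_p$--coefficients and a dc-complete pair is $C_*(|\Ll|\pcom;\F_p)$. The local cohomology (stable Koszul) model of $R\Gamma_I$ carries a filtration whose associated spectral sequence has $E^2$--page $H_I^{-i}(H^*(|\Ll|\pcom;\F_p))_j$ and converges to $\pi_{i+j}$ of $R\Gamma_I\,C^*(|\Ll|\pcom;\F_p)$; combining this with Gorenstein duality and the standard degree bookkeeping identifies the abutment with $H_{i+j}(|\Ll|\pcom;\F_p)$, while Noetherianness of $H^*(|\Ll|\pcom;\F_p)$ guarantees that the spectral sequence is concentrated in finitely many columns and hence converges. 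This is precisely the $p$--local analogue of the classical local cohomology theorem of Greenlees for $H^*(BG;\F_p)$.

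The step I expect to require the most care is the Noetherianness of $H^*(|\Ll|\pcom;\F_p)$, or more precisely the module-finiteness of $H^*(BS;\F_p)$ over it: the transfer $[\Omega]^*$ shows directly only that restriction $H^*(|\Ll|\pcom;\F_p)\to H^*(BS;\F_p)$ is split injective as a map of $H^*(|\Ll|\pcom;\F_p)$--modules, so to deduce finite generation one needs in addition an integrality argument (a fusion-theoretic Evens norm) exhibiting each element of $H^*(BS;\F_p)$ as integral over the ring of stable elements. Once that is in place, everything else is a verbatim transcription of \cite[\S\,9 and Example~10.3]{DGI} with $BG$ replaced by $|\Ll|\pcom$, so I anticipate no further obstacles.
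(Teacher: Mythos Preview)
Your approach is essentially the paper's: invoke the \cite{DGI} local-cohomology machinery using the Gorenstein and proxy-small properties established in the proof of \fullref{Gorenstein}, together with Noetherianness of $H^*(|\Ll|\pcom;\F_p)$. The paper phrases the hypothesis slightly differently, citing $\F_p$--cellularity of $C_*(|\Ll|\pcom;\F_p)$ over $C^*(|\Ll|\pcom;\F_p)$ (via Remark~3.17 of \cite{DGI}, using that the cochain algebra is coconnective and connected) and then applying Proposition~9.3 of \cite{DGI} directly, but this is the same package you describe.

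There is, however, one genuine omission. Proposition~9.3 of \cite{DGI} only yields a spectral sequence converging to $H_{i+j-a}(|\Ll|\pcom;\F_p)$, where $a$ is the Gorenstein shift of $C^*(|\Ll|\pcom;\F_p)\to\F_p$, and your ``standard degree bookkeeping'' does not determine $a$. The paper computes $a=0$ explicitly: by \fullref{PoincareDuality} the fibre $F$ is a Poincar\'e duality algebra of formal dimension equal to $\dim SU(m)$, so the augmentations $C^*(F;\F_p)\to\F_p$ and $C^*(BSU(m)\pcom;\F_p)\to\F_p$ have the \emph{same} shift, and then Propositions~8.6 and~8.10 of \cite{DGI} force the shift of the intermediate augmentation $C^*(|\Ll|\pcom;\F_p)\to\F_p$ to be their difference, namely zero. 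This step genuinely uses the geometric input from \fullref{main-theorem-1} and \fullref{PoincareDuality}, not merely the abstract Gorenstein property; without it the abutment is only identified up to an unspecified degree shift. By contrast, the issue you flag as most delicate---Noetherianness of $H^*(|\Ll|\pcom;\F_p)$---is a standard fact for $p$--local finite groups which the paper simply asserts.
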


\begin{proof}
Since $C^*(|\Ll| \pcom;\F_p)$ is coconnective and connected, it follows by Remark 3.17 in \cite{DGI}
that $C_*(|\Ll| \pcom;\F_p)$ is $\F_p$--cellular over $C^*(|\Ll| \pcom;\F_p)$. Since the $\F_p$--cohomology
of $|\Ll| \pcom$ is Noetherian, it follows from Proposition 9.3 in \cite{DGI} that there is a spectral
sequence
\[ E_{i,j}^2 = H_I^{-i}(H^*(|\Ll| \pcom;\F_p))_j \Rightarrow H_{i+j-a}(|\Ll|\pcom;\F_p) \]
where $I$ is the ideal of elements of positive dimension and $C^*(|\Ll| \pcom;\F_p) \to \F_p$ is
Gorenstein of shift $a$. By \fullref{PoincareDuality}, $F$ is a Poincar\'e duality algebra 
of the same dimension of $SU(m)$, so the shift of $C^*(BSU(m) \pcom;\F_p) \to \F_p$ and $C^*(F;\F_p) \to \F_p$
coincide. By Propositions 8.6 and 8.10 in \cite{DGI}, the shift of  $C^*(|\Ll| \pcom;\F_p) \to \F_p$
is zero.
\end{proof}

Recall that a graded commutative Noetherian local ring $R$ with maximal ideal $\m$ and residue field $k$ is 
Cohen--Macaulay if its local cohomology is concentrated in one degree. In this case, $R$ is Gorenstein if the 
local cohomology in this degree is isomorphic to $\Hom_k(R,k)$ (see Greenlees--Lyubeznik \cite{GL} for instance).

The local cohomology spectral sequence has structural implications on the cohomology of $|\Ll| \pcom$. For example, if $H^*(|\Ll| \pcom ;\F_p)$ 
is Cohen--Macaulay, then the spectral sequence collapses to give an isomorphism 
\[ H_I^r(H^*(|\Ll| \pcom;\F_p)) \cong H_*(|\Ll| \pcom;\F_p) \cong \Hom_{\F_p}(H^*(|\Ll| \pcom;\F_p),\F_p) \]
and so it is Gorenstein (see Greenlees \cite{G} and Greenlees--Lyubeznik \cite{GL}).

\begin{example}
Some important examples of exotic $2$--local finite groups were
constructed in Levi--Oliver \cite{LO} (see also Benson \cite{Be}), motivated by the work
of Solomon \cite{S} of classifying all finite simple groups whose $2$--Sylow
subgroups are isomorphic to those of the Conway group $\Co_3$.
They construct a $2$--local finite group $(S,\Ff_{\Sol}(q),\Ll^c_{\Sol}(q))$
over a $2$--Sylow subgroup $S$ of $\Spin_7(q)$ for any odd prime power $q$. The 
$\F_2$--cohomology of these $2$--local finite groups was computed by
Grbi\'c \cite{Gr} to be
\[ H^*(|\Ll^c_{\Sol}(q)| ^{\wedge}_2;\F_2) \cong \F_2[u_8,u_{12},u_{14},u_{15},y_7,y_{11},y_{13}]/I \]
where $I$ is the ideal generated by the polynomials
\begin{gather*}
y^2_{11} + u_8 y^2_7 + u_{15} y_7, \\
y_{13}^2 + u_{12} y^2_7 + u_{15}y_{11}, \\
y_7^4 + u_{14} y_7^2 + u_{15} y_{13}.
\end{gather*}
In fact, Proposition 1 in \cite{Gr} shows that $H^*(|\Ll^c_{\Sol}(q)| ^{\wedge}_2;\F_2)$ is
a finitely generated free $\F_2[u_8,u_{12},u_{14},u_{15}]$-module. Therefore the cohomology
ring is Cohen--Macaulay (see Definition 5.4.9 and Theorem 5.4.10 in Benson \cite{Be2}). Hence our 
arguments above imply that it must be Gorenstein.

In this particular case we can actually deduce that it is Gorenstein from the computation. The quotient of 
$H^*(|\Ll^c_{\Sol}(q)| ^{\wedge}_2;\F_2)$ by the ideal generated by the polynomial subring $\F_2[u_8,u_{12},u_{14},u_{15}]$ is the graded ring
\[ \F_2[y_7,y_{11},y_{13}]/(y_{11}^2,y_{13}^2,y_7^4) \]
which is a Poincar\'e duality algebra. By Proposition I.1.4 and the Remark on the same page
of Meyer--Smith \cite{MS}, we can conclude that $H^*(|\Ll^c_{\Sol}(q)| ^{\wedge}_2;\F_2)$ is Gorenstein.
\end{example}


\bibliographystyle{amsplain}

\bibliography{mybibfile}


\end{document}